\documentclass[a4paper]{amsart} 
\usepackage{amssymb,times, amscd,amsmath,amsthm, xypic}
\usepackage[all]{xy}
\usepackage{geometry}

\author{Shoji Yokura$^{(*)}$}

\address
{Department of Mathematics and Computer Science, 
Faculty of Science, 
Kagoshima University, 21-35 Korimoto 1-chome, Kagoshima 890-0065, Japan}
\email {yokura@sci.kagoshima-u.ac.jp}
\title [Oriented bivariant theory II]
{Oriented bivariant theory II\\
-- Algebraic cobordism of $S$-schemes --}

\thanks {(*) Partially supported by JSPS KAKENHI Grant Number 16H03936 \\
\quad \emph{keywords} : (co)bordism, algebraic cobordism \\
\quad \emph{Mathematics Subject Classification 2000}: 55N35, 55N22, 14C17, 14C40, 14F99, 19E99}

\begin{document} 
\numberwithin{equation}{section}
\newtheorem{thm}[equation]{Theorem}
\newtheorem{pro}[equation]{Proposition}
\newtheorem{prob}[equation]{Problem}
\newtheorem{cor}[equation]{Corollary}
\newtheorem{lem}[equation]{Lemma}
\theoremstyle{definition}
\newtheorem{ex}[equation]{Example}
\newtheorem{defn}[equation]{Definition}
\newtheorem{rem}[equation]{Remark}
\renewcommand{\rmdefault}{ptm}
\def\alp{\alpha}
\def\be{\beta}
\def\jeden{1\hskip-3.5pt1}
\def\om{\omega}
\def\bigstar{\mathbf{\star}}
\def\ep{\epsilon}
\def\vep{\varepsilon}
\def\Om{\Omega}
\def\la{\lambda}
\def\La{\Lambda}
\def\si{\sigma}
\def\Si{\Sigma}
\def\Cal{\mathcal}
\def\ga{\gamma}
\def\Ga{\Gamma}
\def\de{\delta}
\def\De{\Delta}
\def\bF{\mathbb{F}}
\def\bH{\mathbb H}
\def\bPH{\mathbb {PH}}
\def \bB{\mathbb B}
\def \bA{\mathbb A}
\def \bOB{\mathbb {OB}}
\def \bM{\mathbb M}
\def \bOM{\mathbb {OM}}
\def \calB{\mathcal B}
\def \bK{\mathbb K}
\def \bG{\mathbf G}
\def \bL{\mathbb L}
\def\bN{\mathbb N}
\def\bR{\mathbb R}
\def\bP{\mathbb P}
\def\bZ{\mathbb Z}
\def\bC{\mathbb  C}
\def \bQ{\mathbb Q}
\def\op{\operatorname}

\maketitle

\begin{abstract} This is a sequel to our previous paper of oriented bivariant theory \cite{Yokura-obt}. In 2001 M. Levine and F. Morel constructed algebraic cobordism $\Omega_*(X)$ for schemes $X$ over a field $k$ in an abstract way and later M. Levine and  R. Pandhairpande reconstructed it more geometrically. In this paper in a similar manner we construct an algebraic cobordism $\Omega^*(X \xrightarrow {\pi_X} S)$ for a scheme $X$ over a fixed scheme $S$ in such a way that if the target scheme $S$ is the point $pt = \op{Spec} k$, then $\Omega^{-i}(X \xrightarrow {\pi_X} pt)$ is isomorphic to Levine--Morel's algebraic cobordism $\Omega_i(X)$.\end{abstract}


\section{Introduction}\label{intro} 

V. Voevodsky has introduced algebraic cobordism (now called {\it higher algebraic cobordism}), which was used in his proof of Milnor's conjecture \cite{Voevodsky}. D. Quillen introduced the notion of {\it (complex ) oriented cohomology theory} on the category of differential manifolds \cite{Quillen} and this notion can be formally extended to the category of smooth schemes in algebraic geometry. M. Levine and F. Morel constructed {\it a universal oriented cohomology theory} on smooth schemes, which  they also call {\it algebraic cobordism} \cite{LM},  and recently  M. Levine and R. Pandharipande \cite{LP} gave another equivalent construction of the algebraic cobordism via what they call ``double point degeneration" and they found a nice application of it in the Donaldson--Thomas theory of 3-folds, i.e. they proved what is called MNOP conjecture \cite{MNOP}. 

The algebraic cobordism $\Omega_*(X)$ of a scheme $X$ is roughly speaking constructed as follows. First they consider the Grothendieck group or the group completion, denoted by $\Cal Z_*(X)$, of the monoid consisting of isomorphism classes $[M \xrightarrow {h} X, L_1, \cdots, L_r]$ of a projective morphism $h:M \to X$ from a quasi-projective smooth scheme $M$ together with line bundles $L_i$ over the source scheme $M$. The functor $\Cal Z_*$ carries the four data (D1), (D2), (D3), (D4), and they satisfy the eight conditions (A1), (A2), $\cdots$, (A8). These data and conditions are not written here, but in the following section we will write them in our context. A functor having such four data and satisfying the eight conditions is called \emph{an oriented Borel--Moore functor with products}.

If $A_*$ is an oriented Borel--Moore functor with products, then the abelian group $A_*(pt)$ of the point $pt$ becomes a commutative graded ring. Given a commutative ring $R_*$, an oriented Borel-Moore functor with product $A_*$ together with a graded ring homomorphism $R_* \to A_*(pt)$ is called \emph{an oriented Borel--Moore $R_*$-functor with products}. 
Let $\mathbb L_*$ be the Lazard ring. Then, \emph{an oriented Borel--Moore functor with products {\bf of geometric type}} is defined to be an oriented Borel--Moore $\mathbb L_*$-functor with products which satisfies the following three axioms (see \cite[Definition 2.2.1]{LM}):
\begin{itemize}
\item (Dim) Dimension Axiom: For any smooth scheme $Y$ and any family $(L_1,\cdots, L_n)$ of line bundles on $Y$ with $n > \op{dim}Y$, one has
$$[Y \xrightarrow {\op{id}_Y} Y; L_1, \cdots, L_n] = 0 \in A_*(Y).$$
\item (Sect) Section Axiom: For any smooth scheme $Y$, any line bundles $L$ on $Y$ and any section $s$ of $L$ which is transverse to the zero section of $L$, one has
$$[Y \xrightarrow {\op{id}_Y} Y; L] = i_*[Z \xrightarrow {\op{id}_Z} Z],$$
where $i:Z \to Y$ is the closed immersion.
\item (FGL) Formal Group Law Axiom:  Let $\phi_A:\mathbb L_* \to A_*(pt)$ be the ring homomorphism giving the $\mathbb L_*$-structure and let $F_A \in A_*(pt)[[u,v]]$ be the image of the universal formal group law $F_{\mathbb L} \in \mathbb L_*[[u,v]]$ by $\phi_A$. Then for any smooth scheme $Y$ and any pair $(L,M)$ of line bundles on $Y$, one has
$$F_A(\widetilde c_1(L), \widetilde c_1(M))([Y \xrightarrow {\op{id}_Y} Y]) = \widetilde c_1(L \otimes M)([Y \xrightarrow {\op{id}_Y} Y]) \in A_*(Y).$$
\end{itemize}
\emph{The above group $\Cal Z_*(X)$ modded out by a certain subgroup $\Cal R_*(X)$ involving the above three axioms,
$$\Omega_*(X) := \frac{\Cal Z_*(X)}{\Cal R_*(X)}$$
becomes an oriented Borel--Moore functor with products of geometric type.} 
To be a bit more precisely, they construct it step by step. 
\begin{enumerate}
\item First they consider the subgroup $\langle R^{Dim}\rangle(X) \subset \mathcal Z_*(X)$ dealing with (Dim) 
and define the quotient 
$$\underline{\mathcal Z_*}(X) := \frac{\mathcal Z_*(X) }{\langle R^{Dim}\rangle(X)}.$$
\item Secondly, they consider the subgroup $\langle R^{Sect}\rangle(X) \subset \underline{\mathcal Z_*}(X)$ 
dealing with (Sect) 
and define the quotient 
$$\underline{\underline{\mathcal Z_*}}(X) := \frac{\underline {\mathcal Z_*}(X) }{\langle R^{Sect}\rangle(X)}.$$
\item Finally, they consider the subgroup $\langle R^{FGL}\rangle(X) \subset \mathbb L_* \otimes \underline{\underline{\mathcal Z_*}}(X)$ dealing with (FGL) 
and define the quotient 
$$\Omega_*(X) := \frac{\mathbb L_* \otimes \underline{\underline{\mathcal Z_*}}(X)}{\langle R^{FGL}\rangle(X)}.$$
\end{enumerate}
It turns out (via the construction of $\Omega_*(X)$) that Levine--Morel's $\Omega_*(X)$ is the universal one among such oriented Borel--Moore functors with products of geometric type. The main theorem of \cite{LM} is that if the ground field is of characteristic zero (because the resolution of singularities is used) and if  we restrict ourselves to the category of smooth schemes $X$ the theory of algebraic cobordism $\Omega^*(X):= \Omega_{\op {dim} X - *}(X)$ is in fact \emph{the universal oriented cohomology theory}\footnote{In this sense Levine-Morel's algebraic cobordism $\Omega_*(X)$ is a bordism theory, thus could be called ``algebraic bordism".}. The group $\Cal Z_*(X)$ shall be called the \emph{pre-algebraic cobordism} of $X$.

In the definition of the algebraic cobordism \cite{LM} (also see \cite{LP}), they consider projective morphisms from \emph{quasi-projective} smooth varieties, or equivalently proper morphisms from \emph{quasi-projective} smooth varieties. Very recently J. L. Gonz\'alez and K. Karu \cite{GK2} (cf. \cite{GK})  have observed that the assumption of quasi-projectivity can be dropped, i.e., one can consider proper morphisms from smooth varieties, to get the same algebraic cobordism. So in the rest of the paper we will use Gonz\'alez-Karu's description. 

In our previous paper \cite{Yokura-obt}\footnote{Using the same idea as in \cite{Yokura-obt}, in \cite{SY1} we construct a bivariant version of what is called a motivic characteristic class \cite{BSY}.}, aiming at the construction of a bivariant version of Levine-Morel's algebraic cobordism \cite{LM} (see also \cite{LP}), we introduce an oriented bivariant theory. The starting point of this project was that we observed that for any bivariant theory $\bB$, the covariant theory $\bB_*(X) := \bB^{-*}(X \to pt)$ and the contravariant theory $\bB^*(X) := \bB^*(X \xrightarrow{\op{id}_X} X)$ are both more or less what is called \emph{a Borel--Moore functor with products} without the Chern operator defined (if the Chern operator is also defined and compatible with the pushforward, pullback and exterior product, it is called \emph{an oriented Borel--Moore functor with products}). 

In this paper we start with our following oriented bivariant theory 
$$\mathcal Z^*(X \xrightarrow f Y) := \Bigl \{ [V \xrightarrow h X; L_1, \cdots, L_r] \Bigr \}$$
which is the graded abelian group generated by the coboridsm cycle $[V \xrightarrow h X; L_1, \cdots, L_r]$ (as defined in \cite{LM}) such that
\begin{enumerate}
\item $h:V \to X$ is proper,
\item the composite $f \circ h:V \to Y$ is smooth,
\item $L_i$ is a line bundle over $V$.
\end{enumerate}
The grading is defined by
$$ [V \xrightarrow h X; L_1, \cdots, L_r] \in \mathcal Z^i(X \xrightarrow f Y) \Longleftrightarrow -i+r = \op{dim}(f \circ h),$$
where the dimension $\op{dim}(f \circ h)$ is the relative dimension of the smooth morphism, i.e., the dimension of the fiber, or $\op{dim}(f \circ h) =\op{dim} V - \op{dim}Y$. 
\begin{rem}
\begin{enumerate}
\item The reason why we consider such gradings is that eventually we want to capture $\Omega^*(X \xrightarrow {\pi_X} S)$ as a bivariant-theoretical group in the sense of Fulton--MacPherson \cite{FM}. In their bivariant theory $\mathbb B$, the grading is such that $\mathbb B^{-i}(X \to pt) = \mathbb B_i(X)$, which is the associated covariant group.
\item When $Y$ is a point, then our $\mathcal Z^*(X \xrightarrow f pt)$ is the same as Levine-Morel's pre-algebraic cobordism $\mathcal Z_*(X)$ .
\end{enumerate}
\end{rem}

In this paper we restrict the above group to the category of $S$-schemes over a fixed scheme, namely the over category $Sch/S$, whose objects are morphisms $\pi_X:X \to S$ and morphisms from $\pi_X:X \to S$ to $\pi_Y:Y \to S$ are $f:X \to Y$ 
such that the following diagram commutes
$$\xymatrix{
X\ar[dr]_ {\pi_X}\ar[rr]^ {f} && Y \ \ar[dl]^{\pi_Y}\\
&S.}$$
Thus we consider the graded abelian group $\mathcal Z^*(X \xrightarrow {\pi_X} S)$ on the over category $Sch/S$.
Then, in an analogous manner as done in Levine-Morel's construction, we proceed as follows:
\begin{enumerate}
\item First, consider the subgroup $\langle R^{Dim}\rangle(X \xrightarrow {\pi_X} S) \subset \mathcal Z_*(X \xrightarrow {\pi_X} S)$ dealing with (rel-Dim) a ``relative" Dimension Axiom
and define the quotient 
$$\underline{\mathcal Z^*}(X \xrightarrow {\pi_X} S) := \frac{\mathcal Z^*(X \xrightarrow {\pi_X} S) }{\langle R^{Dim}\rangle(X \xrightarrow {\pi_X} S)}.$$
\item Secondly, consider the subgroup $\langle R^{Sect}\rangle(X \xrightarrow {\pi_X} S) \subset \underline{\mathcal Z^*}(X \xrightarrow {\pi_X} S)$ 
dealing with (rel-Sect) a ``relative" Section Axiom, 
and define the quotient 
$$\underline{\underline{\mathcal Z^*}}(X \xrightarrow {\pi_X} S) := \frac{\underline {\mathcal Z^*}(X \xrightarrow {\pi_X} S) }{\langle R^{Sect}\rangle(X \xrightarrow {\pi_X} S)}.$$
\item Finally, consider the subgroup $\langle R^{FGL}\rangle(X \xrightarrow {\pi_X} S) \subset \mathbb L_* \otimes \underline{\underline{\mathcal Z^*}}(X \xrightarrow {\pi_X} S)$ dealing with (FGL) a ``relative" Formal Group Law Axiom, 
and define the quotient 
$$\Omega^*(X \xrightarrow {\pi_X} S) := \frac{\mathbb L_* \otimes \underline{\underline{\mathcal Z^*}}(X \xrightarrow {\pi_X} S)}{\langle R^{FGL}\rangle(X \xrightarrow {\pi_X} S)}.$$
\end{enumerate}

$\Omega^*(X \xrightarrow {\pi_X} S)$ is a $Sch/S$-version of an oriented Borel--Moore functor with products of geometric type for a scheme $X$ over a fixed scheme $S$ such that $\Omega^{-*}(X \to pt)$ is equal to Levine--Morel's algebraic cobordism $\Omega_*(X)$. In this sense our group $\Omega^*(X \xrightarrow {\pi_X} S)$, algebraic cobordism of an $S$-scheme $\pi_X:X \to S$, could be called a \emph{relative algebraic cobordism}.
 
\begin{rem}
Motivated by the construction given in the present paper, furthermore we tried to get a bivariant version of algebraic cobordism, which is our ultimate goal. However we have been unable to obtain one, mainly because the bivariant product is not well-defined under the present construction, as we will make a remark later. We hope that by modifying the present construction slightly we would be able to get a bivariant algebraic cobordism.
\end{rem}
\section {Fulton--MacPherson's bivariant theory and a universal bivariant theory}\label{FM-BT}

We make a quick review of Fulton--MacPherson's bivariant theory \cite {FM} (also see \cite{Fulton-book}) and a universal bivariant theory \cite{Yokura-obt}. 

Let $\Cal V$ be a category which has a final object $pt$ and on which the fiber product or fiber square is well-defined. Also we consider a class of maps, called ``confined maps" (e.g., proper maps, projective maps, in algebraic geometry), which are \emph{closed under composition and base change and contain all the identity maps}, and a class of fiber squares, called ``independent squares" (or ``confined squares", e.g., ``Tor-independent" in algebraic geometry, a fiber square with some extra conditions required on morphisms of the square), which satisfy the following:

(i) if the two inside squares in  
$$\CD
X''@> {h'} >> X' @> {g'} >> X \\
@VV {f''}V @VV {f'}V @VV {f}V\\
Y''@>> {h} > Y' @>> {g} > Y \endCD
\quad \quad \qquad \text{or} \qquad \quad \quad 
\CD
X' @>> {h''} > X \\
@V {f'}VV @VV {f}V\\
Y' @>> {h'} > Y \\
@V {g'}VV @VV {g}V \\
Z'  @>> {h} > Z \endCD
$$
are independent, then the outside square is also independent,

(ii) any square of the following forms are independent:
$$
\xymatrix{X \ar[d]_{f} \ar[r]^{\op {id}_X}&  X \ar[d]^f & & X \ar[d]_{\op {id}_X} \ar[r]^f & Y \ar[d]^{\op {id}_Y} \\
Y \ar[r]_{\op {id}_X}  & Y && X \ar[r]_f & Y}
$$
where $f:X \to Y$ is \emph{any} morphism. 

A bivariant theory $\bB$ on a category $\Cal V$ with values in the category of graded abelian groups is an assignment to each morphism
$ X  \xrightarrow{f} Y$
in the category $\Cal V$ a graded abelian group (in most cases we ignore the grading )
$\bB(X  \xrightarrow{f} Y)$
which is equipped with the following three basic operations. The $i$-th component of $\bB(X  \xrightarrow{f} Y)$, $i \in \bZ$, is denoted by $\bB^i(X  \xrightarrow{f} Y)$.
\begin{enumerate}
\item {\bf Product}: For morphisms $f: X \to Y$ and $g: Y
\to Z$, the product operation
$$\bullet: \bB^i( X  \xrightarrow{f}  Y) \otimes \bB^j( Y  \xrightarrow{g}  Z) \to
\bB^{i+j}( X  \xrightarrow{gf}  Z)$$
is  defined.

\item {\bf Pushforward}: For morphisms $f: X \to Y$
and $g: Y \to Z$ with $f$ \emph {confined}, the pushforward operation
$$f_*: \bB^i( X  \xrightarrow{gf} Z) \to \bB^i( Y  \xrightarrow{g}  Z) $$
is  defined.

\item {\bf Pullback} : For an \emph{independent} square \qquad $\CD
X' @> g' >> X \\
@V f' VV @VV f V\\
Y' @>> g > Y, \endCD
$

the pullback operation
$$g^* : \bB^i( X  \xrightarrow{f} Y) \to \bB^i( X'  \xrightarrow{f'} Y') $$
is  defined.
\end{enumerate}

These three operations are required to satisfy the following seven compatibility axioms (\cite [Part I, \S 2.2]{FM}):
\begin{enumerate}

\item[($A_1$)] {\bf Product is associative}: given a diagram $X \xrightarrow f Y  \xrightarrow g Z \xrightarrow h  W$ with $\alp \in \bB(X \xrightarrow f Y),  \be \in \bB(Y \xrightarrow g Z), \ga \in \bB(Z \xrightarrow h W)$,
$$(\alp \bullet\be) \bullet \ga = \alp \bullet (\be \bullet \ga).$$
\item[($A_2$)] {\bf Pushforward is functorial} : given a diagram $X \xrightarrow f Y  \xrightarrow g Z \xrightarrow h  W$ with $f$ and $g$ confined and $\alp \in \bB(X \xrightarrow {h\circ g\circ f} W)$
$$(g\circ f)_* (\alp) = g_*(f_*(\alp)).$$
\item[($A_3$)] {\bf Pullback is functorial}: given independent squares
$$\CD
X''@> {h'} >> X' @> {g'} >> X \\
@VV {f''}V @VV {f'}V @VV {f}V\\
Y''@>> {h} > Y' @>> {g} > Y \endCD
$$
and $\alp \in \bB(X \xrightarrow f Y)$,
$$(g \circ h)^*(\alp) = h^*(g^*(\alp)).$$

\item[($A_{12}$)] {\bf Product and pushforward commute}: given a diagram $X \xrightarrow f Y  \xrightarrow g Z \xrightarrow h  W$ with $f$ confined and $\alp \in \bB(X \xrightarrow {g \circ f} Z),  \be \in \bB(Z \xrightarrow h W)$,
$$f_*(\alp \bullet\be)  = f_*(\alp) \bullet \be.$$
\item[($A_{13}$)] {\bf Product and pullback commute}: given independent squares
$$\CD
X' @> {h''} >> X \\
@V {f'}VV @VV {f}V\\
Y' @> {h'} >> Y \\
@V {g'}VV @VV {g}V \\
Z'  @>> {h} > Z \endCD
$$
with $\alp \in \bB(X \xrightarrow {f} Y),  \be \in \bB(Y \xrightarrow g Z)$,
$$h^*(\alp \bullet\be)  = {h'}^*(\alp) \bullet h^*(\be).$$
\item[($A_{23}$)] {\bf Pushforward and pullback commute}: given independent squares
$$\CD
X' @> {h''} >> X \\
@V {f'}VV @VV {f}V\\
Y' @> {h'} >> Y \\
@V {g'}VV @VV {g}V \\
Z'  @>> {h} > Z \endCD
$$
with $f$ confined and $\alp \in \bB(X \xrightarrow {g\circ f} Z)$,
$$f'_*(h^*(\alp))  = h^*(f_*(\alp)).$$
\item[($A_{123}$)] {\bf Projection formula}: given an independent square with $g$ confined and $\alp \in \bB(X \xrightarrow {f} Y),  \be \in \bB(Y' \xrightarrow {h \circ g} Z)$
$$\CD
X' @> {g'} >> X \\
@V {f'}VV @VV {f}V\\
Y' @>> {g} > Y @>> h >Z \\
\endCD
$$
and $\alp \in \bB(X \xrightarrow {f} Y),  \be \in \bB(Y' \xrightarrow {h \circ g} Z)$,
$$g'_*(g^*(\alp) \bullet \be)  = \alp \bullet g_*(\be).$$
\end{enumerate}
We also assume that $\bB$ has units:

\underline {Units}: $\bB$ has units, i.e., there is an element $1_X \in \bB^0( X  \xrightarrow{\op {id}_X} X)$ such that $\alp \bullet 1_X = \alp$ for all morphisms $W \to X$ and all $\alp \in \bB(W \to X)$, such that $1_X \bullet \beta = \beta $ for all morphisms $X \to Y$ and all $\beta \in \bB(X \to Y)$, and such that $g^*1_X = 1_{X'}$ for all $g: X' \to X$. 

\underline {Commutativity}\label{commutativity}: $\bB$ is called \emph{commutative} if whenever both
$$\CD
W @> {g'} >> X \\
@V {f'}VV @VV {f}V\\
Y @>> {g} > Z  \\
\endCD  
\quad \quad \text{and} \quad \quad 
\CD
W @> {f'} >> Y \\
@V {g'}VV @VV {g}V\\
X @>> {g} > Z \\
\endCD  
$$
are independent squares with $\alp \in \bB(X \xrightarrow f Z)$ and $\be \in \bB(Y \xrightarrow g Z)$,
$$g^*(\alp) \bullet \be = f^*(\be) \bullet \alp .$$
(NOTE:If $g^*(\alp) \bullet \be = (-1)^{\op{deg}(\alp) \op{deg}(\be)}f^*(\be) \bullet \alp$ holds, it is called \emph{skew-commutative}. In this paper we assume that bivariant theories are commutative.)
Let $\bB, \bB'$ be two bivariant theories on a category $\Cal V$. A {\it Grothendieck transformation} from $\bB$ to $\bB'$, $\ga : \bB \to \bB'$
is a collection of homomorphisms
$\bB(X \to Y) \to \bB'(X \to Y)$
for a morphism $X \to Y$ in the category $\Cal V$, which preserves the above three basic operations: 
\begin{enumerate}
\item $\ga (\alp \bullet_{\bB} \be) = \ga (\alp) \bullet _{\bB'} \ga (\be)$, 
\item $\ga(f_{*}\alp) = f_*\ga (\alp)$, and 
\item $\ga (g^* \alp) = g^* \ga (\alp)$. 
\end{enumerate}
A bivariant theory unifies both a covariant theory and a contravariant theory in the following sense:

$\bB_*(X):= \bB(X \to pt)$ becomes a covariant functor for {\it confined}  morphisms and 

$\bB^*(X) := \bB(X  \xrightarrow{id}  X)$ becomes a contravariant functor for {\it any} morphisms. 
\noindent
A Grothendieck transformation $\ga: \bB \to \bB'$ induces natural transformations $\ga_*: \bB_* \to \bB_*'$ and $\ga^*: \bB^* \to {\bB'}^*$.

\begin{defn}\label{grading}
As to the grading, $\bB_i(X):= \bB^{-i}(X  \xrightarrow{id}  X)$ and
$\bB^j(X):= \bB^j(X  \xrightarrow{id}  X)$.
\end{defn}
 
\begin{defn}\label{canonical}(\cite[Part I, \S 2.6.2 Definition]{FM}) Let $\Cal S$ be a class of maps in $\Cal V$, which is closed under compositions and containing all identity maps. Suppose that to each $f: X \to Y$ in $\Cal S$ there is assigned an element
$\theta(f) \in \bB(X  \xrightarrow {f} Y)$ satisfying that
\begin{enumerate}
\item [(i)] $\theta (g \circ f) = \theta(f) \bullet \theta(g)$ for all $f:X \to Y$, $g: Y \to Z \in \Cal S$ and
\item [(ii)] $\theta(\op {id}_X) = 1_X $ for all $X$ with $1_X \in \bB^*(X):= B(X  \xrightarrow{\op {id}_X} X)$ the unit element.
\end{enumerate}
Then $\theta(f)$ is called an {\it orientation} of $f$. (In \cite[Part I, \S 2.6.2 Definition]{FM} it is called a {\it canonical orientation} of $f$, but in this paper it shall be simply called an orientation.)
\end{defn} 
\noindent
\underline {Gysin homomorphisms}\label{gysin}:
Note that such an orientation makes the covariant functor $\bB_*(X)$ a contravariant functor for morphisms in $\Cal S$, and also makes the contravariant functor $\bB^*$ a covariant functor for morphisms in $\Cal C \cap \Cal S$: Indeed, 
\begin{enumerate}
\item As to the covariant functor $\bB_*(X)$: For a morphism $f: X \to Y \in \Cal S$ and the orientation $\theta$ on $\Cal S$ the following {\it Gysin homomorphism}
$$f^*: \bB_*(Y) \to \bB_*(X) \quad \text {defined by} \quad  f^*(\alp) :=\theta(f) \bullet \alp$$
 is {\it contravariantly functorial}. 
\item As to contravariant functor $\bB^*$: For a fiber square (which is an independent square by hypothesis)
$$\CD
X @> f >> Y \\
@V {\op {id}_X} VV @VV {\op {id}_Y}V\\
X @>> f > Y, \endCD
$$
where $f \in \Cal C \cap  \Cal S$, the following {\it Gysin homomorphism}
$$f_*: \bB^*(X) \to \bB^*(Y) \quad \text {defined by} \quad
f_*(\alp) := f_*(\alp \bullet \theta (f))$$
is {\it covariantly functorial}.
\end{enumerate}

The notation should carry the information of $\Cal S$ and the orientation $\theta$, but it will be usually omitted if it is not necessary to be mentioned. Note that the above conditions (i) and (ii) of Definition (\ref{canonical}) are certainly \emph{necessary} for the above Gysin homomorphisms to be functorial. 

\begin{defn} (i) Let $\Cal S$ be another class of maps called ``specialized maps" (e.g., smooth maps in algebraic geometry) in $\Cal V$ , which is closed under composition, closed under base change and containing all identity maps. Let $\bB$ be a bivariant theory. If $\Cal S$ has  orientations in $\bB$, then we say that $\Cal S$ is $\bB$-oriented and an element of $\Cal S$ is called a $\bB$-oriented morphism. (Of course $\Cal S$ is also a class of confined maps, but since we consider the above extra condition of $\bB$-orientation  on $\Cal S$, we give a different name to $\Cal S$.)

(ii) Let $\Cal S$ be as in (i). Let $\bB$ be a bivariant theory and $\Cal S$ be $\bB$-oriented. Furthermore, if the orientation $\theta$ on $\Cal S$ satisfies that for an independent square with $f \in \Cal S$
$$
\CD
X' @> g' >> X\\
@Vf'VV   @VV f V \\
Y' @>> g > Y
\endCD
$$
the following condition holds: 
$\theta (f') = g^* \theta (f)$, 
(which means that the orientation $\theta$ preserves the pullback operation), then we call $\theta$ a {\it stable orientation} and say that $\Cal S$ is {\it stably $\bB$-oriented} and an element of $\Cal S$ is called {\it a stably $\bB$-oriented morphism} .
\end{defn}

The following theorem is about {\it the existence of a universal one} of the bivariant theories for a given category $\Cal V$ with a class $\Cal C$ of confined morphisms, a class of independent squares and a class $\Cal S$ of specialized morphisms. 

\begin{thm}(\cite[Theorem 3.1]{Yokura-obt})(A universal bivariant theory) \label{UBT} Let  $\Cal V$ be a category with a class $\Cal C$ of confined morphisms, a class of independent squares and a class $\Cal S$ of specialized maps.  We define 
$$\bM^{\Cal C} _{\Cal S}(X  \xrightarrow{f}  Y)$$
to be the free abelian group generated by the set of isomorphism classes of confined morphisms $h: W \to X$  such that the composite of  $h$ and $f$ is a specialized map:
$$h \in \Cal C \quad \text {and} \quad f \circ h: W \to Y \in \Cal S.$$
\begin{enumerate}
\item  The association $\bM^{\Cal C} _{\Cal S}$ is a bivariant theory if the three bivariant operations are defined as follows:
\begin{enumerate}
\item  {\bf Product}: For morphisms $f: X \to Y$ and $g: Y
\to Z$, the product operation
$$\bullet: \bM^{\Cal C} _{\Cal S} ( X  \xrightarrow{f}  Y) \otimes \bM^{\Cal C} _{\Cal S} ( Y  \xrightarrow{g}  Z) \to
\bM^{\Cal C} _{\Cal S} ( X  \xrightarrow{gf}  Z)$$
is  defined by
$$[V \xrightarrow{h}  X] \bullet [W  \xrightarrow{ k}  Y]:= [V'  \xrightarrow{ h \circ {k}''}  X]$$
and extended linearly, where we consider the following fiber squares
$$\CD
V' @> {h'} >> X' @> {f'} >> W \\
@V {{k}''}VV @V {{k}'}VV @V {k}VV\\
V@>> {h} > X @>> {f} > Y @>> {g} > Z .\endCD
$$
\item {\bf Pushforward}: For morphisms $f: X \to Y$
and $g: Y \to Z$ with $f$ \underline {confined}, the pushforward operation
$$f_*: \bM^{\Cal C} _{\Cal S} ( X  \xrightarrow{gf} Z) \to \bM^{\Cal C} _{\Cal S} ( Y  \xrightarrow{g}  Z) $$
is  defined by
$$f_*\left ([V \xrightarrow{h}  X] \right) := [V  \xrightarrow{f \circ h}  Y]$$
and extended linearly.

\item {\bf Pullback}: For an independent square
$$\CD
X' @> g' >> X \\
@V f' VV @VV f V\\
Y' @>> g > Y, \endCD
$$
the pullback operation
$$g^* : \bM^{\Cal C} _{\Cal S} ( X  \xrightarrow{f} Y) \to \bM^{\Cal C} _{\Cal S}( X'  \xrightarrow{f'} Y') $$
is  defined by
$$g^*\left ([V  \xrightarrow{h}  X] \right):=  [V'  \xrightarrow{{h}'}  X']$$
and extended linearly, where we consider the following fiber squares:
$$\CD
V' @> g'' >> V \\
@V {{h}'} VV @VV {h}V\\
X' @> g' >> X \\
@V f' VV @VV f V\\
Y' @>> g > Y. \endCD
$$
\end{enumerate}
\item Let $\Cal {BT}$ be a class of bivariant theories $\bB$ on the same category $\Cal V$ with a class $\Cal C$ of confined morphisms, a class of independent squares and a class $\Cal S$ of specialized maps. 
Let $\Cal S$ be \underline {nice} canonical $\bB$-orientable for any bivariant theory $\bB \in \Cal {BT}$. Then, for each bivariant theory $\bB \in \Cal {BT}$ there exists a unique Grothendieck transformation
$$\ga_{\bB} : \bM^{\Cal C} _{\Cal S} \to \bB$$
such that for a specialized morphism $f: X \to Y \in \Cal S$ the homomorphism
$\ga_{\bB} : \bM^{\Cal C} _{\Cal S}(X  \xrightarrow{f}  Y) \to \bB(X  \xrightarrow{f}  Y)$
satisfies the normalization condition that $$\ga_{\bB}([X  \xrightarrow{\op {id}_X}  X]) = \theta_{\bB}(f).$$
\end{enumerate}
\end{thm}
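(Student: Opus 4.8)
The plan is to verify the two parts of Theorem~\ref{UBT} essentially by checking that the explicit formulas for product, pushforward and pullback on the free abelian groups $\bM^{\Cal C}_{\Cal S}$ are well-defined and satisfy the seven Fulton--MacPherson axioms, and then to exhibit $\ga_{\bB}$ and prove its uniqueness. For part (1), I would first observe that all three operations are defined on generators $[V\xrightarrow h X]$ and extended $\bZ$-linearly, so it suffices to check everything on generators; well-definedness up to isomorphism classes is immediate since the constructions only use fiber products, which are defined up to canonical isomorphism. The key structural inputs are exactly the hypotheses packaged into $\Cal C$, the independent squares, and $\Cal S$: that $\Cal C$ is closed under composition and base change and contains identities (so that $h'$ in the pullback square lies in $\Cal C$, and $f\circ h$ in the pushforward lies where needed), that $\Cal S$ is closed under composition and base change and contains identities (so that $f'\circ h'$ in a pullback is still specialized and $gf\circ h$ in a product is specialized), and that the class of independent squares satisfies conditions (i) and (ii) above (pasting of independent squares, and that identity/graph squares are independent) — these are precisely what makes the composite fiber squares used in the definitions of $\bullet$ and $g^*$ again independent.

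Then I would go through the axioms one by one, each reducing to a diagram chase with iterated fiber squares. Associativity $(A_1)$ follows from the uniqueness of iterated fiber products: both $(\alp\bullet\be)\bullet\ga$ and $\alp\bullet(\be\bullet\ga)$ compute the same fiber product over the tower $X\to Y\to Z\to W$. Functoriality of pushforward $(A_2)$ and of pullback $(A_3)$ are immediate from functoriality of composition and from the pasting law for fiber squares (condition (i)). The mixed axioms $(A_{12})$, $(A_{13})$, $(A_{23})$, and the projection formula $(A_{123})$ are each a matter of drawing the appropriate $2\times 3$ or $2\times 2\times 2$ array of fiber squares and noting that the two sides are represented by the same confined morphism into the same scheme; the only subtlety is tracking which squares are independent, which is handled by repeated application of (i). The unit is $1_X=[X\xrightarrow{\op{id}_X}X]$, and the unit axioms and $g^*1_X=1_{X'}$ are trivial from the formulas; commutativity reduces to the symmetry of the fiber product $W=X\times_Z Y$.

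For part (2), given $\bB\in\Cal{BT}$ with $\Cal S$ (nicely) canonically $\bB$-orientable via an orientation $\theta_{\bB}$, I would \emph{define} $\ga_{\bB}$ on a generator $h:V\to X$ with $f\circ h\in\Cal S$ by the forced formula $\ga_{\bB}([V\xrightarrow h X]):= h_*\bigl(\theta_{\bB}(f\circ h)\bigr)\in\bB(X\xrightarrow f Y)$, where $h_*$ is the bivariant pushforward along the confined map $h$ (applied to $\theta_{\bB}(f\circ h)\in\bB(V\xrightarrow{f\circ h}Y)$), and extend linearly. The normalization $\ga_{\bB}([X\xrightarrow{\op{id}_X}X])=\theta_{\bB}(f)$ is then immediate from $(\op{id}_X)_*=\op{id}$. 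One then checks that $\ga_{\bB}$ commutes with the three operations: compatibility with pushforward is $(A_2)$ in $\bB$; compatibility with pullback uses that $\theta_{\bB}$ is a stable orientation (so $\theta_{\bB}$ pulls back correctly) together with $(A_{23})$; compatibility with product uses the multiplicativity $\theta_{\bB}(k\circ h)=\theta_{\bB}(h)\bullet\theta_{\bB}(k)$ from Definition~\ref{canonical}(i) together with $(A_{12})$ and $(A_{13})$ to move the pushforwards past the products. Uniqueness holds because any Grothendieck transformation respecting the normalization must send $[V\xrightarrow h X]=h_*[V\xrightarrow{\op{id}_V}V]$ (which is forced by compatibility with pushforward, since $[V\xrightarrow h X]$ is literally the pushforward of the unit-like generator along $h$) to $h_*\theta_{\bB}(f\circ h)$, so the formula above is the only possibility.

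The main obstacle I expect is purely bookkeeping: in the mixed axioms $(A_{13})$, $(A_{23})$, $(A_{123})$ and especially in the compatibility of $\ga_{\bB}$ with the product, one must carefully arrange a three-dimensional stack of fiber squares, identify which faces are independent, and confirm that canonical isomorphisms of iterated fiber products match up the two representatives; there is no conceptual difficulty, but it is easy to mislabel a morphism. The one genuinely substantive point — rather than formal diagram chasing — is the role of the \emph{stable} orientation hypothesis in the pullback compatibility of $\ga_{\bB}$: without $\theta_{\bB}(f')=g^*\theta_{\bB}(f)$ the transformation would not commute with $g^*$, so isolating exactly where that hypothesis (and the "niceness" of the orientability) is used is the crux of the argument.
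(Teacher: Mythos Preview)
The paper does not supply its own proof of this theorem: it is quoted verbatim from \cite[Theorem~3.1]{Yokura-obt} in the review section \S\ref{FM-BT}, with no argument given here. So there is nothing in the present paper to compare your proposal against.

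That said, your outline is correct and is exactly the standard argument (and, as far as one can infer, the argument of \cite{Yokura-obt}). Part~(1) really is pure bookkeeping with iterated fiber squares, using closure of $\Cal C$ and $\Cal S$ under composition and base change together with the pasting property~(i) for independent squares; your identification of $1_X=[X\xrightarrow{\op{id}_X}X]$ and of commutativity via symmetry of the fiber product is right. For part~(2) your forced formula $\ga_{\bB}([V\xrightarrow h X])=h_*\theta_{\bB}(f\circ h)$ is the correct one, and your uniqueness argument via $[V\xrightarrow h X]=h_*[V\xrightarrow{\op{id}_V}V]$ is exactly the point. Your remark that the stability condition $g^*\theta_{\bB}(f)=\theta_{\bB}(f')$ is precisely what is needed for compatibility of $\ga_{\bB}$ with pullback is the one genuinely non-formal observation, and it is accurate; the ``nice'' hypothesis is what guarantees this stability for the class $\Cal S$.
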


\section{Oriented bivariant theory and a universal oriented bivariant theory }

Levine--Morel's algebraic cobordism is the universal one among the so-caled  \emph {oriented} Borel--Moore functors with products for algebraic schemes. Here ``oriented" means that the given Borel--Moore functor $H_*$ is equipped with the endomorphsim $\tilde c_1(L): H_*(X) \to H_*(X)$ for a line bundle $L$ over the scheme $X$. Motivated by this ``orientation" (which is different from the one given in Definition \ref{canonical}, but we still call this ``orientation" using a different symbol so that the reader will not be confused with terminologies), in \cite[\S4]{Yokura-obt} we introduce an orientation to bivariant theories for any category, using the notion of \emph {fibered categories} in abstract category theory (e.g, see \cite{Vistoli}) and such a bivariant theory equipped with such an orientation (Chern operator) is called \emph{an oriented bivariant theory}. 

\begin{defn}
Let $\Cal L$ be a fibered category over $\Cal V$. An object in the fiber $\Cal L(X)$ over an object $X \in \Cal V$ is called an \emph {``fiber-object over $X$"}, abusing words, and denoted by $L$, $M$, etc.
\end{defn}



\begin{defn}(\cite[Definition 4.2]{Yokura-obt}) (an oriented bivariant theory) \label{orientation} Let $\bB$ be a bivariant theory on a category $\Cal V$.

\begin{enumerate}
\item For a fiber-object $L$ over $X$, the \emph{``operator" on $\bB$ associated to $L$}, denoted by $\phi(L)$, is defined to be an endomorphism
$$\phi(L): \bB(X  \xrightarrow{f}  Y) \to \bB(X  \xrightarrow{f}  Y) $$
which satisfies the following properties:

(O-1) {\bf identity}: If $L$ and $L'$ are line bundles over $X$ and isomorphic (i.e., if $f:L\to X$ and $f':L' \to X$, then there exists an isomorphism $i: L\to L'$ such that $f = f' \circ i$) , then we have
$$\phi(L) = \phi(L'): \bB(X  \xrightarrow{f}  Y) \to \bB(X  \xrightarrow{f}  Y).$$

(O-2)  {\bf commutativity}: Let $L$ and $L'$ be two fiber-objects  over $X$, then we have
$$\phi(L) \circ \phi(L') = \phi(L') \circ \phi(L) :\bB(X  \xrightarrow{f}  Y) \to \bB(X  \xrightarrow{f}  Y). $$

(O-3)   {\bf compatibility with product}: For morphisms $f:X \to Y$ and $g:Y \to Z$,  $\alp \in \bB(X  \xrightarrow{f} Y)$ and $ \be \in \bB(Y  \xrightarrow{g} Z)$,  a fiber-object  $L$ over $X$ and a fiber-object  $M$ over $Y$, we have
 $$ \phi(L) (\alp \bullet \be) = \phi(L)(\alp) \bullet \be, \quad  \phi(f^*M) (\alp \bullet \be) = \alp \bullet \phi(M)(\be).$$

(O-4)  {\bf compatibility with pushforward}: For a confined morphism $f:X \to Y$ and a fiber-object $M$ over $Y$ we have 
$$ f_*\left (\phi(f^*M)(\alp) \right ) = \phi(M)(f_*\alp).$$

(O-5)   {\bf compatibility with pullback}: For an independent square and a fiber-object  $L$ over $X$
$$
\CD
X' @> g' >> X \\
@V f' VV @VV f V\\
Y' @>> g > Y 
 \endCD
$$
we have 
$$g^*\left (\phi(L)(\alp) \right ) = \phi({g'}^*L)(g^*\alp).$$

The above operator is called an ``{\it orientation}" and a bivariant theory equipped with such an orientation is called an {\it oriented bivariant theory}, denoted by $\bOB$. 
\item An {\it oriented Grothendieck transformation} between two oriented bivariant theories is a Grothendieck transformation which preserves or is compatible with the operator, i.e., for two oriented bivariant theories $\bOB$ with an orientation $\phi$ and $\bOB'$ with an orientation $\phi'$ the following diagram commutes
$$
\CD
\bOB (X  \xrightarrow{f}  Y)  @> {\phi(L)}>> \bOB (X  \xrightarrow{f}  Y) \\
@V \ga VV @VV \ga V\\
\bOB' (X  \xrightarrow{f}  Y) @>>{\phi'(L)} > \bOB' (X  \xrightarrow{f}  Y).
 \endCD
$$
\end{enumerate}
\end{defn} 

\begin{thm} (\cite[Theorem 4.6]{Yokura-obt}) \label{obt}(A universal oriented bivariant theory)
 Let  $\Cal V$ be a category with a class $\Cal C$ of confined morphisms, a class of independent squares, a class  $\Cal S$ of specialized morphisms and $\Cal L$ a fibered category over $\Cal V$.  We define 
$$\bOM^{\Cal C} _{\Cal S}(X  \xrightarrow{f}  Y)$$
to be the free abelian group generated by the set of isomorphism classes of cobordism cycles over $X$
$$[V  \xrightarrow{h} X; L_1, L_2, \cdots, L_r]$$
such that $h \in \mathcal C$, $f \circ h: W \to Y \in \Cal S$ and $L_i$ a fiber-object over $V$.
\begin{enumerate}
\item The association $\bOM^{\Cal C} _{\Cal S}$ becomes an oriented bivariant theory if the four operations are defined as follows:
\begin{enumerate}

\item  {\bf Orientation $\Phi$}: For a morphism $f:X \to Y$ and a fiber-object $L$ over $X$, the operator
$$\phi (L):\bOM^{\Cal C} _{\Cal S} ( X  \xrightarrow{f}  Y) \to \bOM^{\Cal C} _{\Cal S} ( X  \xrightarrow{f}  Y) $$
is defined by
$$\phi(L)([V  \xrightarrow{h} X; L_1, L_2, \cdots, L_r]):=[V  \xrightarrow{h} X; L_1, L_2, \cdots, L_r, h^*L].$$
and extended linearly.

\item {\bf Product}: For morphisms $f: X \to Y$ and $g: Y
\to Z$, the product operation
$$\bullet: \bOM^{\Cal C} _{\Cal S} ( X  \xrightarrow{f}  Y) \otimes \bOM^{\Cal C} _{\Cal S} ( Y  \xrightarrow{g}  Z) \to
\bOM^{\Cal C} _{\Cal S} ( X  \xrightarrow{gf}  Z)$$
is  defined as follows: The product is defined by
\begin{align*}
& [V  \xrightarrow{h} X; L_1, \cdots, L_r]  \bullet [W  \xrightarrow{k} Y; M_1, \cdots, M_s] \\
& :=  [V'  \xrightarrow{h \circ {k}''}  X; {{k}''}^*L_1, \cdots,{{k}''}^*L_r, (f' \circ {h}')^*M_1, \cdots, (f' \circ {h}')^*M_s ]\
\end{align*}
and extended bilinearly. Here we consider the following fiber squares
$$\CD
V' @> {h'} >> X' @> {f'} >> W \\
@V {{k}''}VV @V {{k}'}VV @V {k}VV\\
V@>> {h} > X @>> {f} > Y @>> {g} > Z .\endCD
$$
\item {\bf Pushforward}: For morphisms $f: X \to Y$
and $g: Y \to Z$ with $f$ confined, the pushforward operation
$$f_*: \bOM^{\Cal C} _{\Cal S} ( X  \xrightarrow{gf} Z) \to \bOM^{\Cal C} _{\Cal S} ( Y  \xrightarrow{g}  Z) $$
is  defined by
$$f_*\left ([V  \xrightarrow{h} X; L_1, \cdots, L_r]  \right) := [V  \xrightarrow{f \circ h} Y; L_1, \cdots, L_r]$$
and extended linearly.

\item {\bf Pullback}: For an independent square
$$\CD
X' @> g' >> X \\
@V f' VV @VV f V\\
Y' @>> g > Y, \endCD
$$
the pullback operation
$$g^*: \bOM^{\Cal C} _{\Cal S} ( X  \xrightarrow{f} Y) \to \bOM^{\Cal C} _{\Cal S}( X'  \xrightarrow{f'} Y') $$
is  defined by
$$g^*\left ([V  \xrightarrow{h} X; L_1, \cdots, L_r] \right):=  [V'  \xrightarrow{{h}'}  X'; {g''}^*L_1, \cdots, {g''}^*L_r]$$
and extended linearly, where we consider the following fiber squares:
$$\CD
V' @> g'' >> V \\
@V {h'} VV @VV {h}V\\
X' @> g' >> X \\
@V f' VV @VV f V\\
Y' @>> g > Y. \endCD
$$
\end{enumerate}

\item Let $\Cal {OBT}$ be a class of oriented bivariant theories $\bOB$ on the same category $\Cal V$ with a class $\Cal C$ of confined morphisms, a class of independent squares, a class $\Cal S$ of specialized morphisms and a fibered category $\Cal L$ over $\Cal V$. Let $\Cal S$ be \underline {nice} canonical $\bOB$-orientable for any oriented bivariant theory $\bOB \in \Cal {OBT}$. Then, for each oriented bivariant theory $\bOB \in \Cal {OBT}$ with an orientation $\phi$  there exists a unique oriented Grothendieck transformation
$$\ga_{\bOB} : \bOM^{\Cal C} _{\Cal S} \to \bOB$$
such that for any $f: X \to Y \in \Cal S$ the homomorphism
$\ga_{\bOB} : \bOM^{\Cal C} _{\Cal S}(X  \xrightarrow{f}  Y) \to \bOB(X  \xrightarrow{f}  Y)$
satisfies the normalization condition that $$\ga_{\bOB}([X  \xrightarrow{\op {id}_X}  X; L_1, \cdots, L_r]) = \phi(L_1) \circ \cdots \circ \phi(L_r) (\theta_{\bOB}(f)).$$
\end{enumerate}
\end{thm}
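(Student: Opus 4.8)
\section*{Proof proposal for Theorem \ref{obt}}

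The plan is to prove the two assertions in turn: first that $\bOM^{\Cal C}_{\Cal S}$ with the four operations listed is an oriented bivariant theory, and then that it is the universal one. For the first assertion I would exploit the fact that $\bOM^{\Cal C}_{\Cal S}$ is obtained from the universal (non-oriented) bivariant theory $\bM^{\Cal C}_{\Cal S}$ of Theorem \ref{UBT} by decorating each generator $h\colon W\to X$ with a list of fiber-objects $L_1,\dots,L_r$ on the source. Hence each of the compatibility axioms $(A_1)$, $(A_2)$, $(A_3)$, $(A_{12})$, $(A_{13})$, $(A_{23})$, $(A_{123})$, the unit axiom and commutativity splits into (a) the corresponding statement for the underlying morphisms, which is exactly what is checked in the proof of Theorem \ref{UBT}, and (b) a matching of the decorating fiber-objects, which is a diagram chase using only that $\Cal L\to\Cal V$ is a fibered category, so that $(\psi\circ\varphi)^*L\cong\varphi^*\psi^*L$ and $\op{id}^*L\cong L$, together with the stability of confined and of specialized maps under base change. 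Well-definedness (independence of the chosen representatives of the isomorphism classes and of the auxiliary fiber squares, which are canonical only up to isomorphism) follows as for $\bM^{\Cal C}_{\Cal S}$ once one notes that pulling a fiber-object back along canonically isomorphic maps yields canonically isomorphic fiber-objects. Finally the orientation axioms (O-1)--(O-5) are verified directly on a generator $[V\xrightarrow{h}X;L_1,\dots,L_r]$: (O-1) because isomorphic fiber-objects produce isomorphic cobordism cycles; (O-2) because appending $h^*L$ then $h^*L'$ yields a cycle isomorphic to the one obtained in the other order; and (O-3), (O-4), (O-5) by the same bookkeeping as in (b), using $h^*f^*M=(fh)^*M$ and the base-change behaviour of the pullback of fiber-objects.

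For the universality I would define $\ga_{\bOB}\colon\bOM^{\Cal C}_{\Cal S}(X\xrightarrow{f}Y)\to\bOB(X\xrightarrow{f}Y)$ on generators by
$$\ga_{\bOB}\bigl([V\xrightarrow{h}X;L_1,\dots,L_r]\bigr):=h_*\Bigl(\phi(L_1)\circ\cdots\circ\phi(L_r)\bigl(\theta_{\bOB}(f\circ h)\bigr)\Bigr),$$
extended linearly; this is legitimate because $h$ is confined and $f\circ h\in\Cal S$ carries the canonical orientation $\theta_{\bOB}(f\circ h)$ furnished by niceness. Taking $h=\op{id}_X$ gives the normalization condition at once. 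Compatibility of $\ga_{\bOB}$ with pushforward then reduces to functoriality of pushforward $(A_2)$; compatibility with pullback reduces to $(A_{23})$ together with stability of $\theta_{\bOB}$ (part of niceness) and (O-5); and compatibility with the Chern operator $\phi$ reduces to (O-4), to move $\phi$ past $h_*$, and (O-2), to commute $\phi(h^*L)$ past the $\phi(L_i)$.

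The most delicate step is checking $\ga_{\bOB}(\alpha\bullet\beta)=\ga_{\bOB}(\alpha)\bullet\ga_{\bOB}(\beta)$, and this is where I expect the main obstacle. Writing $\alpha\bullet\beta$ out via the double fiber square appearing in the definition of the product, one first rewrites $\ga_{\bOB}(\alpha)\bullet\ga_{\bOB}(\beta)=h_*(\cdots)\bullet k_*(\cdots)$, and uses $(A_{12})$ and the projection formula $(A_{123})$ to absorb both pushforwards into a single $(h\circ k'')_*$; then one uses (O-5) and stability of $\theta_{\bOB}$ to identify the pulled-back factor $\theta_{\bOB}$ and the pulled-back fiber-objects $k''^*L_i$; and finally one applies (O-3) and the composition law $\theta_{\bOB}(q\circ p)=\theta_{\bOB}(p)\bullet\theta_{\bOB}(q)$ to reassemble the result into $\ga_{\bOB}(\alpha\bullet\beta)$. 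The friction here is purely organizational: keeping track of which fiber square each pullback refers to and invoking the bivariant axioms and the orientation axioms in precisely the right order.

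Uniqueness is then forced by the very identity that drove the construction. In $\bOM^{\Cal C}_{\Cal S}$ the canonical orientation of a specialized map $s$ is the class of the identity of its source, so for $s=f\circ h$ one has $\theta_{\bOM}(f\circ h)=[V\xrightarrow{\op{id}_V}V]$, while $\phi(\op{id}_V^*L_i)=\phi(L_i)$ by (O-1); hence every generator satisfies
$$[V\xrightarrow{h}X;L_1,\dots,L_r]=h_*\Bigl(\phi(L_1)\circ\cdots\circ\phi(L_r)\bigl(\theta_{\bOM}(f\circ h)\bigr)\Bigr).$$
Any oriented Grothendieck transformation obeying the normalization condition must send $\theta_{\bOM}(f\circ h)$ to $\theta_{\bOB}(f\circ h)$ and must commute with $h_*$ and with the operators $\phi(L_i)$, so it agrees with $\ga_{\bOB}$ on generators and therefore everywhere. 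This completes the plan.
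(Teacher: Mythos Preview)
Your proposal is correct and follows the expected line of argument. Note, however, that in this paper Theorem~\ref{obt} is merely quoted from \cite[Theorem~4.6]{Yokura-obt} and carries no proof here, so there is nothing in the present text to compare your argument against directly. The closest analogue is the short proof of Theorem~\ref{universality}, which treats the covariant (over-category) specialization rather than the full bivariant statement; your plan is strictly more detailed than that sketch, and in particular you handle the product compatibility, which Theorem~\ref{universality} does not need.

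One small point of contact worth flagging: in the proof of Theorem~\ref{universality} the author writes the decomposition of a generator as $\widetilde{c_1}(L_1)\circ\cdots\circ\widetilde{c_1}(L_r)\circ h_*\circ(\pi_X\circ h)^*(1_S)$, i.e.\ with the Chern operators placed \emph{after} the pushforward $h_*$, whereas you (correctly, since the $L_i$ live over $V$) place them \emph{before} $h_*$. The two orderings are reconciled by (O-4), and indeed the commutative ladder the paper draws immediately afterwards makes sense only if one reads it your way; so your formula $\ga_{\bOB}([V\xrightarrow{h}X;L_1,\dots,L_r])=h_*\bigl(\phi(L_1)\circ\cdots\circ\phi(L_r)(\theta_{\bOB}(f\circ h))\bigr)$ is the right one to work with, and the discrepancy is purely notational.
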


In the following we consider the \emph{over category} or \emph{slice category} $\mathcal V/S$ of the category $\mathcal V $ over an object $S \in \mathcal V$, namely,
\begin{itemize}
\item objects of $\mathcal V/S$ are morphisms $\pi_X:X \to S$ (morphisms of $\mathcal V$),
\item morphisms of $\mathcal V/S$ are morphisms $f:X \to Y$ (morphisms of $\mathcal V$) from $\pi_X:X \to S$ to $\pi_Y:Y \to S$ such that the following diagram commutes:
$$\xymatrix{
X\ar[dr]_ {\pi_X}\ar[rr]^ {f} && Y \ \ar[dl]^{\pi_Y}\\
&S.}$$
\end{itemize}
The following proposition is a generalization of \cite[Proposition 2.3]{Yokura-obt} and its proof is done in the same way as in the proof of \cite[Proposition 2.3]{Yokura-obt}, using the axioms $A_{12}, A_{23}$ and $A_{123}$, so omitted, or left for the reader. 

\begin{pro} \label{external}Let $\bB$ a commutative bivariant theory and we restrict $\bB$ to the over category $\mathcal V/S$. 
Let us define the exterior product 
$$\times_S : \bB(X \xrightarrow {\pi_X} S ) \times \bB(Y  \xrightarrow{\pi_Y}  S) \to \bB( X \times_S Y \xrightarrow {\pi_X \times_S \pi_Y} S)$$
by 
$\alp \times_S \be := \pi_Y^*\alp \bullet \be (= \pi_X^*\be \bullet \alp)$ (note that $\bB$ is commutative), 
where we consider the following independent square
$$
\CD
X \times_S Y  @> p_2 >> Y\\
@Vp_1VV   @VV {\pi_Y} V \\
X @>> {\pi_X} > S.
\endCD
$$
The morphism  $\pi_X \times_S \pi_Y: X \times_S Y \to S$ is the composite $\pi_Y \circ p_2 = \pi_X \circ p_1$. 

\begin{enumerate}

\item For two confined morphisms $f:X_1 \to X_2$ from $\pi_{X_1}:X_1\to S$ to $\pi_{X_2}:X_2 \to S$ and $g:Y_1 \to Y_2$ from $\pi_{Y_1}:Y_1\to S$ to $\pi_{Y_2}:Y_2 \to S$, we have that for $\alp_1 \in \bB(X_1 \xrightarrow {\pi_{X_1}} S ) $ and $\be_1 \in \bB(Y_1 \xrightarrow {\pi_{Y_1}} S )$ 
$$f_*\alp_1 \times_S g_*\be_1 = (f \times _S  g)_* (\alp_1 \times_S \be_1),$$
namely,  the following diagram commutes:
$$
\CD
\bB(X_1 \xrightarrow {\pi_{X_1}} S )  \times \bB(Y_1 \xrightarrow {\pi_{Y_1}} S )   @> \times_S >> \bB( X_1 \times_S Y_1\xrightarrow {\pi_{X_1} \times_S \pi_{Y_1}} S)\\
@Vf_* \times g_*VV   @VV (f\times_S g)_* V \\
\bB(X_2 \xrightarrow {\pi_{X_2}} S )  \times \bB(Y_2 \xrightarrow {\pi_{Y_2}} S )  @>> \times _S > \bB( X_2 \times_S Y_2\xrightarrow {\pi_{X_2} \times_S \pi_{Y_2}} S).
\endCD
$$
\item If the above two morphisms $f$ and $g$ are specialized, then for
$\alp_2 \in \bB(X_2 \xrightarrow {\pi_{X_2}} S ) $ and $\be_2 \in \bB(Y_2 \xrightarrow {\pi_{Y_2}} S )$ , we have
$$f^*\alp_2 \times_S g^*\be_2 = (f \times _S  g)^* (\alp_2 \times_S \be_2),$$
namely,  the following diagram commutes:
$$
\CD
\bB(X_2 \xrightarrow {\pi_{X_2}} S )  \times \bB(Y_2 \xrightarrow {\pi_{Y_2}} S )   @> \times_S >> \bB( X_2 \times_S Y_2\xrightarrow {\pi_{X_2} \times_S \pi_{Y_2}} S)\\
@Vf^* \times g^*VV   @VV (f\times_S g)^* V \\
\bB(X_1 \xrightarrow {\pi_{X_1}} S )  \times \bB(Y_1 \xrightarrow {\pi_{Y_1}} S )  @>> \times _S > \bB( X_1 \times_S Y_1\xrightarrow {\pi_{X_1} \times_S \pi_{Y_1}} S).
\endCD
$$
\end{enumerate}

Here we use the following independent squares:
$$\CD
X_1 \times_S Y_1 @> {f''} >> X_2 \times_S Y_1 @> {p_2'} >>  Y_1\\
@V {g''}VV @VV {g'}V @VV {g}V\\
X_1 \times_S Y_2 @> {f'} >> X_2 \times_S Y_2 @> {p_2} >> Y_2 \\
@V {p_1'}VV @VV {p_1}V @VV {\pi_{Y_2}}V\\
X_1  @>> {f} > X_2  @>> {\pi_{X_2}}> S .\endCD
$$
And $\pi_{X_i} \times_S \pi_{Y_i}$ is any composite of the morphisms from $X_i \times_S Y_i$ to $S$, e.g., $\pi_{X_1} \times_S \pi_{Y_1}= \pi_{Y_2} \circ g \circ p_2' \circ f''$ and $f \times _S g :X_1 \times_S Y_1 \to X_2 \times_S Y_2$ is the composite $g' \circ f'' = r' \circ g''$.
\end{pro}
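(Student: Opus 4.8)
\emph{Strategy.} The plan is to unwind both sides using the definition of $\times_S$ and reduce everything to the formal bivariant axioms, carrying out the computation inside the diagram of independent squares displayed in the statement; the argument follows \cite[Proposition 2.3]{Yokura-obt}, with products over the point replaced throughout by fiber products over $S$. Two preliminary remarks lighten the bookkeeping. First, by commutativity of $\bB$ the two expressions $\pi_Y^*\alp\bullet\be$ and $\pi_X^*\be\bullet\alp$ for $\alp\times_S\be$ agree, so in each variable I may use whichever is convenient. Second, each of $\op{id}_{X_1}\times_S g$, $f\times_S\op{id}_{Y_2}$ and $f\times_S g$ is confined whenever $f$ and $g$ are (being base changes and composites of such maps), is specialized whenever $f$ and $g$ are, and $f\times_S g=(f\times_S\op{id}_{Y_2})\circ(\op{id}_{X_1}\times_S g)$; hence, by functoriality of pushforward ($A_2$) and by contravariant functoriality of Gysin maps, it suffices to treat each identity in the two ``one--variable'' cases in which one of $f,g$ is an identity.

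\emph{Part (1).} The two one--variable statements are $(\op{id}_{X_1}\times_S g)_*(\alp_1\times_S\be_1)=\alp_1\times_S g_*\be_1$ and $(f\times_S\op{id}_{Y_2})_*(\alp_1\times_S\ga)=f_*\alp_1\times_S\ga$ for $\ga\in\bB(Y_2\xrightarrow{\pi_{Y_2}}S)$, and part (1) then results from applying the first and then the second with $\ga=g_*\be_1$. Each follows from the projection formula. For the first, write $\alp_1\times_S\be_1=\pi_{Y_1}^*\alp_1\bullet\be_1$, use functoriality of pullback ($A_3$) to rewrite $\pi_{Y_1}^*\alp_1$ as the pullback of $\pi_{Y_2}^*\alp_1$ along the face of the displayed diagram whose bottom row is $g:Y_1\to Y_2$, and apply $A_{123}$ to that face; this yields $\pi_{Y_2}^*\alp_1\bullet g_*\be_1=\alp_1\times_S g_*\be_1$. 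The second statement is symmetric, now using the face with bottom row $f:X_1\to X_2$; alternatively it can be obtained via ``product and pushforward commute'' ($A_{12}$) followed by ``pushforward and pullback commute'' ($A_{23}$). Reassembling gives the stated equality, and the commuting square is merely a restatement of it.

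\emph{Part (2).} Here $f,g$ are specialized and $f^*\xi=\theta(f)\bullet\xi$ denotes the Gysin homomorphism attached to the (stable) orientation $\theta$ of $\Cal S$. By orientation axiom (i) of Definition \ref{canonical} and associativity ($A_1$) one has $(f\times_S g)^*=(\op{id}_{X_1}\times_S g)^*\circ(f\times_S\op{id}_{Y_2})^*$, so I again reduce to, say, $(f\times_S\op{id}_{Y_2})^*(\alp_2\times_S\be_2)=f^*\alp_2\times_S\be_2$. Expanding the left side by the definitions and regrouping by $A_1$ turns it into $\bigl(\theta(f\times_S\op{id}_{Y_2})\bullet\pi_{Y_2}^*\alp_2\bigr)\bullet\be_2$; by ``product and pullback commute'' ($A_{13}$) applied to the relevant face, $\pi_{Y_2}^*(\theta(f)\bullet\alp_2)$ equals $p^*\theta(f)\bullet\pi_{Y_2}^*\alp_2$ for the appropriate projection $p$, and \emph{stability} of $\theta$ identifies $p^*\theta(f)$ with $\theta(f\times_S\op{id}_{Y_2})$, since $f\times_S\op{id}_{Y_2}$ is the base change of $f$ along $p$. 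Hence the left side equals $\pi_{Y_2}^*(f^*\alp_2)\bullet\be_2=f^*\alp_2\times_S\be_2$; the other variable is symmetric, and the commuting square follows.

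\emph{Main obstacle.} There is no conceptual difficulty: the whole task is bookkeeping --- keeping track of which group $\bB(\,\cdot\to\cdot\,)$ each intermediate class inhabits, and checking at every step that the square invoked is among the independent squares of the displayed diagram (or a composite thereof). The one subtlety absent from part (1) is that the orientation of $\Cal S$ must be stable: this is precisely what permits rewriting $\theta(f\times_S\op{id}_{Y_2})$ in terms of $\theta(f)$, and without it the argument for part (2) halts at that step. Granting this, the proof transcribes \cite[Proposition 2.3]{Yokura-obt} verbatim into the relative setting.
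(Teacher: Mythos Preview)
Your proof is correct and follows exactly the approach the paper indicates: the paper omits the proof entirely, saying only that it ``is done in the same way as in the proof of \cite[Proposition 2.3]{Yokura-obt}, using the axioms $A_{12}, A_{23}$ and $A_{123}$,'' and you have carried out precisely that transcription to the relative setting, reducing to one--variable cases and invoking the projection formula and the commutation axioms. Your observation that Part~(2) additionally requires \emph{stability} of the orientation (so that $\theta(f\times_S\op{id}_{Y_2})$ can be identified with the pullback of $\theta(f)$) is a genuine point that the proposition's statement leaves implicit; it is worth flagging, as the argument does not go through without it.
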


\begin{rem} If $S$ is a terminal object $pt$ (e.g., a point $pt$ in the category of topological spaces, complex algebraic varieties, schemes, etc.), then the over category  $\mathcal V/S =\mathcal V/pt$ is the same as $\mathcal V$ and we get \cite[Proposition 2.3]{Yokura-obt}.
\end{rem}

\begin{thm}\label{obt-overcategory} Let $\bB$ a commutative bivariant theory and we restrict $\bB$ to the over category $\mathcal V/S$. Then we have the following.

\begin{enumerate}

\item[(D1)'] Let $\Cal {AB}$ be the category of abelian groups. Then on the subcategory $\Cal V'/S \subset \Cal V/S$ of confined morphisms, $\bB^*(- \xrightarrow{\pi_{-}} S): \Cal V'/S \to \Cal {AB}$ is a covariant functor.
Here, for a confined morphism $f: X \to Y$ from $\pi_X:X \to S$ to $\pi_Y:Y \to S$, 
the pushforward
$$f_*: \bB^*( X  \xrightarrow{\pi_X} S) \to \bB^*( Y  \xrightarrow{\pi_Y} S) $$
defined by the bivariant pushforward.

\item[(D2)] For a specialized morphism $f: X \to Y$ from $\pi_X:X \to S$ to $\pi_Y:Y \to S$, 
the pullback
$$f^*: \bB^*( Y  \xrightarrow{\pi_Y} S) \to \bB^*( X  \xrightarrow{\pi_X} S) $$
is defined by
$f^*(\alp):= \theta(f) \bullet \alp.$

\item[(D3)] For a fiber-object $L$ over $X$, we have the operator
$$\phi (L):\bB^*( X  \xrightarrow{\pi_X}  S) \to \bB^*( X  \xrightarrow{\pi_X}  S). $$

\item[(D4)] The above external product 
$$\times_S: \bB^*( X  \xrightarrow{\pi_X} S) \times \bB^*( Y  \xrightarrow{\pi_Y} S) \to \bB^*( X \times_SY  \xrightarrow{\pi_X \times_S \pi_Y} S)$$
 is commutative, associative and admits $1 \in \bB^*(S \xrightarrow {\op{id}_S} S)$.
 
\item[(A1)] For specilaized morphisms $f: X \to Y$ from $\pi_X:X \to S$ to $\pi_Y:Y \to S$ and $g:Y \to Z$ from $\pi_Y:Y \to S$ to $\pi_Z:Z \to S$, we have
$$(g \circ f)^* = f^* \circ g^*: \bB^*(Z \xrightarrow {\pi_Z} S) \to \bB^*(X \xrightarrow {\pi_X} S) .$$

\item[(A2)] For an independent square
$$\CD
W@> {g'}>> X \\
@V {f'} VV @VV f V\\
Y@>> g > Z \endCD
$$ 
where $f \in \mathcal C$ is confined  and $g \in \mathcal S$ is specialized , we have that $g^*\circ f_* = (f')_*(g')^*$, 
i.e. the following diagram commutes:
$$\CD
\bB^*( X  \xrightarrow{\pi_X} S) @> {{g'}^*}>> \bB^*( W  \xrightarrow{\pi_W} S)  \\
@V {f_*} VV @VV {f'}_* V\\
\bB^*( Z  \xrightarrow{\pi_Z} S) @>> {g^*} > \bB^*( Y  \xrightarrow{\pi_Y} S) . \endCD
$$ 

\item[(A3)] For a confined morphism $f:X \to Y$ from $\pi_X:X \to S$ to $\pi_Y:Y \to S$ and a fiber-object  $M$ over $Y$, 
$$ f_* \circ \phi(f^*M) = \phi (M) \circ f_*: \bB^*( X  \xrightarrow{\pi_X} S) \to \bB^*( Y  \xrightarrow{\pi_Y} S) .$$

\item[(A4)] For a specialized morphism $f:X \to Y$ from $\pi_X:X \to S$ to $\pi_Y:Y \to S$ and a fiber-object $M$ over $Y$, 
$$ \phi(f^*M) \circ f^* = f^* \circ \phi (M): \bB^*( Y  \xrightarrow{\pi_Y} S) \to \bB^*( X  \xrightarrow{\pi_X} S) .$$

\item[(A5)] Let $L$ and $L'$ be two fiber-objects over $X$, then we have
$$\phi (L) \circ \phi(L') = \phi(L') \circ \phi(L) :\bB^*( X  \xrightarrow{\pi_X} S)  \to \bB^* ( X  \xrightarrow{\pi_X} S). $$
Moreover,  if $L$ and $L'$ are isomorphic, then we have that $\phi(L) = \phi(L')$.
\\
\item[(A6)] For proper morphisms $f:X_1 \to X_2$ from $\pi_{X_1}:X_1 \to S$ to $\pi_{X_2}:X_2 \to S$ and $g:Y_1 \to Y_2$ from $\pi_{Y_1}:Y_1 \to S$ to $\pi_{Y_2}:Y_2 \to S$, and $\alp \in \bB^*( X_1  \xrightarrow{\pi_{X_1}} S)$ and $\be \in \bB^*(Y_1  \xrightarrow{\pi_{Y_1}} S)$,
we have
$$f_*\alp \times_S g_*\be = (f \times_S g)_*(\alp \times_S \be).$$

\item[(A7)] For smooth morphisms $f:X_1 \to X_2$ from $\pi_{X_1}:X_1 \to S$ to $\pi_{X_2}:X_2 \to S$ and $g: Y_1 \to Y_2$ from $\pi_{Y_1}:Y_1 \to S$ to $\pi_{Y_2}:Y_2 \to S$,
$\alp \in \bB^*(X_2  \xrightarrow{\pi_{X_2}} S)$ and $\be \in \bB^*(Y_2  \xrightarrow{\pi_{Y_2}} S)$,
we have
$$f^*\alp \times_S g^*\be = (f \times_S g)^*(\alp \times_S \be).$$

\item[(A8)] For a fiber-object  $L$ over $X$ and $\alp \in \bB^*( X  \xrightarrow{\pi_X} S) $ and $\be \in \bB^*( Y  \xrightarrow{\pi_Y} S) $, 
 $$\phi(L) (\alp) \times_S \be = \phi({p_1}^*L)(\alp \times_S \be) \in \bB^*( X \times_SY  \xrightarrow{\pi_X \times_S \pi_Y} S) .$$ 
 Here we use the fiber square
 $$\CD
X \times_S Y @> {p_2}>> Y \\
@V {p_1} VV @VV {\pi_Y} V\\
X@>> {\pi_X} > S. \endCD
$$ 

\end{enumerate}
\end{thm}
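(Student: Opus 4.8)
The plan is to verify the four data $(D1)'$, $(D2)$, $(D3)$, $(D4)$ and the eight axioms $(A1)$--$(A8)$ one by one, in each case reducing the assertion to one of the seven bivariant compatibility axioms $A_1,A_2,A_3,A_{12},A_{13},A_{23},A_{123}$, to one of the orientation axioms (O-1)--(O-5) of Definition~\ref{orientation}, to the functoriality of the Gysin homomorphism recalled after Definition~\ref{canonical}, or to Proposition~\ref{external}. (Since the operators $\phi(L)$ occur in $(D3)$ and in $(A3)$, $(A4)$, $(A5)$, $(A8)$, the standing hypothesis is that $\bB$ is moreover a commutative \emph{oriented} bivariant theory and that specialized morphisms carry a stable orientation $\theta$.) First I would dispose of $(D1)'$, $(D2)$, $(D3)$: because a morphism $f$ in $\Cal V/S$ satisfies $\pi_Y\circ f=\pi_X$, the bivariant pushforward along a confined $f\colon X\to Y$ sends $\bB^*(X\xrightarrow{\pi_X}S)=\bB(X\xrightarrow{\pi_Y\circ f}S)$ into $\bB^*(Y\xrightarrow{\pi_Y}S)$, and functoriality is $A_2$, which gives $(D1)'$; for $(D2)$, $\theta(f)\bullet\alp$ lies in $\bB(X\xrightarrow{\pi_Y\circ f}S)=\bB^*(X\xrightarrow{\pi_X}S)$, so $f^*:=\theta(f)\bullet(-)$ is the well-defined Gysin homomorphism; and $(D3)$ is simply the ambient orientation operator restricted to the groups $\bB^*(X\xrightarrow{\pi_X}S)$.

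Next I would treat the ``$\phi$-axioms'' and the exterior-product axioms, all of which are essentially immediate. Axiom $(A3)$ is exactly (O-4); $(A4)$ is the second identity of (O-3) with the left-hand class taken to be $\theta(f)$; $(A5)$ is (O-2) together with (O-1); and $(A8)$ follows by applying (O-5) to the square defining $\times_S$ to obtain $\pi_Y^*(\phi(L)(\alp))=\phi(p_1^*L)(\pi_Y^*\alp)$ and then the first identity of (O-3). Axioms $(A6)$ and $(A7)$ are literally parts $(1)$ and $(2)$ of Proposition~\ref{external} (using that a base change of a specialized morphism is specialized, so that $f\times_S g$ is specialized and $(f\times_S g)^*$ makes sense). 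For $(D4)$: commutativity of $\times_S$ is the identity $\pi_Y^*\alp\bullet\be=\pi_X^*\be\bullet\alp$ already recorded in Proposition~\ref{external}, which is the commutativity of $\bB$; associativity is a diagram chase on the iterated fibre product $X\times_S Y\times_S Z$ using $A_1$, $A_3$ and $A_{13}$; and $1_S\in\bB^0(S\xrightarrow{\op{id}_S}S)$ is a two-sided unit because $\pi_X^*1_S=1_X$ (unit axiom) and $1_X\bullet\alp=\alp=\alp\bullet 1_S$.

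The two items that require an actual argument are $(A1)$ and $(A2)$. For $(A1)$, contravariant functoriality of the Gysin pullback: for specialized $f\colon X\to Y$, $g\colon Y\to Z$ over $S$ and $\alp\in\bB^*(Z\xrightarrow{\pi_Z}S)$ one computes
$$(g\circ f)^*(\alp)=\theta(g\circ f)\bullet\alp=\bigl(\theta(f)\bullet\theta(g)\bigr)\bullet\alp=\theta(f)\bullet\bigl(\theta(g)\bullet\alp\bigr)=f^*\bigl(g^*(\alp)\bigr),$$
using property (i) of an orientation (Definition~\ref{canonical}) and associativity $A_1$. For $(A2)$, I would transpose the given square, adjoin $\pi_Z\colon Z\to S$ on its right, and apply the projection formula $A_{123}$ with the first class equal to $\theta(g)$ and the second equal to the given $\alp\in\bB^*(X\xrightarrow{\pi_X}S)$; this yields $f'_*\bigl(f^*\theta(g)\bullet\alp\bigr)=\theta(g)\bullet f_*(\alp)$, and since $f^*\theta(g)=\theta(g')$ by stability of $\theta$ (while $f'$ is confined and $g'$ specialized, being base changes), this reads $g^*\circ f_*=(f')_*\circ(g')^*$, i.e.\ $(A2)$.

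The main obstacle is not conceptual but bookkeeping: in $(D4)$ and in the proofs of $(A6)$, $(A7)$ one must explicitly draw the various fibre products over $S$ and check at each stage that the squares in question are independent and that the two composites down to $S$ agree, exactly in the spirit of the proof of \cite[Proposition 2.3]{Yokura-obt} and of Proposition~\ref{external}; I do not expect any new difficulty to arise from passing from $\Cal V$ to the over category $\Cal V/S$. The gradings are respected because every bivariant operation used is compatible with the grading, and when $S=pt$ the whole statement specializes to \cite[Proposition 2.3]{Yokura-obt}.
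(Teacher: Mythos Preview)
Your proposal is correct and aligns with the paper's approach: the paper gives no explicit proof of this theorem, implicitly leaving it as a routine verification from the bivariant axioms together with Proposition~\ref{external} (whose own proof the paper says is done ``using the axioms $A_{12}$, $A_{23}$ and $A_{123}$'' as in \cite[Proposition~2.3]{Yokura-obt}). Your item-by-item reduction to $A_1$--$A_{123}$, the orientation axioms (O-1)--(O-5), stability of $\theta$, and Proposition~\ref{external} is exactly that verification written out; in particular your uses of $A_{123}$ for $(A2)$ and of (O-5) followed by (O-3) for $(A8)$ are the intended arguments, and your parenthetical remark that $\bB$ must be an \emph{oriented} commutative bivariant theory (so that $\phi(L)$ is available) is a helpful clarification of the hypotheses.
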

We also point out that the contravaiant functor $\bB^*(X) = \bB^*(X \xrightarrow{\op{id}_X} X)$ have similar properties as above:

\begin{pro} Let $\bB$ a commutative bivariant theory on the category $\mathcal V$ with $\Cal C, \Cal S, \Cal L$ as above.

\begin{enumerate}

\item[(D1)'] For confined and specialized morphisms in $\Cal C \cap \Cal S$, i.e., for confined and nice canonical $\bB$-orientable morphisms
$f:X \to Y$, the Gysin (pushforward) homomorphisms 
$$f_*: \bB^*(X) \to \bB^*(Y)$$
are covariantly functorial.

\item[(D2)] For \underline {any} morphisms $f:X \to Y$,  we have the pullback homomorphism:
$$f^*: \bB^*(Y) \to \bB^*(X)$$

\item[(D3)] For a fiber-object $L$ over $X$ we have the operator
$$ \phi(L):\bB^*(X) \to \bB^*(X).$$

\item[(D4)] We have the exterior product
$$\times: \bB^*(X)  \times \bB^*(Y) \to \bB^*(X \times Y)$$
which is commutative, associative and admits $1 \in \bB^*(X)$.

\item[(A1)] For any morphisms $f:X \to Y$ and $g:Y \to Z$ we have
$$(g \circ f)^* = f^* \circ g^*.$$

\item[(A2)] For an independent square
$$
\CD
X' @> g' >> X \\
@V f' VV @VV f V\\
Y' @>> g > Y 
 \endCD
$$
with $g \in \Cal C \cap \Cal S$, we have $f^* \circ g_* = g'_* \circ (f')^*$, i.e., the following diagram commutes:
$$
\CD
\bB^*(Y') @> {f'}^*>> \bB^*(X') \\
@V g_* VV @VV {g'}_* V\\
\bB^*(Y) @>> f^* > \bB^*(X), 
 \endCD
$$

\item[(A3)] For a confined and specialized morphism $f: X \to Y$ and a fiber-object  $M$ over $Y$, we have
$$f_* \circ \phi (f^*M) = \phi (M) \circ f_*: \bOB^*(X) \to \bOB^*(Y).$$

\item[(A4)] For any morphism $f: X \to Y$ and fiber-object  $M$ over $Y$, we have
$$ \phi (f^*M) \circ f^* = f^*\circ \phi(M) : \bOB^*(Y) \to \bOB^*(X).$$

\item[(A5)]  Let $L$ and $L'$ be two fiber-objects  over $X$, then we have
$$\phi (L) \circ \phi(L') = \phi(L') \circ \phi(L) :\bOB^*(X) \to \bOB^*(X),$$
and if $L$ and $L'$ are isomorphic, then we have that $\phi(L) = \phi (L')$.

\item[(A6)] For both confined and specialized morphisms $f:X_1 \to X_2$ and $g:Y_1 \to Y_2$, and $\alp \in \bOB^*(X_1)$ and $\be \in \bOB^*(Y_1)$ we have
we have
$$f_*\alp \times g_*\be = (f \times g)_* (\alp \times \be).$$

\item[(A7)] For any morphisms $f:X_1 \to X_2$ and $g:Y_1\to Y_2$,and $\alp \in \bOB^*(X_2)$ and $\be \in \bOB^*(Y_2)$ 
we have
$$f^*\alp \times g^*\be = (f \times g)^* (\alp \times \be).$$

\item[(A8)] For fiber-object  $L$ over $X$ and for $\alp \in \bOB^*(X)$ and $\be \in \bOB^*(Y)$, we have
 $$\phi(L) (\alp) \times \be = \phi({p_1}^*L)(\alp \times \be) .$$
Here $p_1: X \times Y \to X$ is the projection.
\end{enumerate}

\end{pro}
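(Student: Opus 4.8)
The plan is to deduce each of (D1)$'$, (D2), (D3), (D4) and (A1)--(A8) directly from the basic bivariant operations of $\bB$ together with the seven bivariant axioms, the orientation axioms (O-1)--(O-5), and the stable orientation $\theta$ on $\Cal S$ --- in other words, to repeat for $\bB^*(X)=\bB(X\xrightarrow{\op{id}_X}X)$ the arguments already used for $\bB^*(-\xrightarrow{\pi_-}S)$ in Theorem \ref{obt-overcategory} and for $\bB_*$ in \cite[\S4]{Yokura-obt}. The observation that makes everything go is that for \emph{any} morphism $f:X\to Y$ the square
$$\CD X @> f >> Y \\ @V \op{id}_X VV @VV \op{id}_Y V \\ X @>> f > Y \endCD$$
is independent by axiom (ii); hence the bivariant pullback along it is a homomorphism $f^*:\bB^*(Y)\to\bB^*(X)$, which is (D2). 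Likewise (D1)$'$ is the Gysin pushforward $f_*(\alp):=f_*(\alp\bullet\theta(f))$ for $f\in\Cal C\cap\Cal S$ introduced in the ``Gysin homomorphisms'' paragraph, already noted there to be covariantly functorial by virtue of conditions (i), (ii) of Definition \ref{canonical} and the axioms $A_1,A_2,A_{12}$; (D3) is the orientation $\phi(L)$ of Definition \ref{orientation} read on $\bB(X\xrightarrow{\op{id}_X}X)$; and (D4) is $\alp\times\be:=p_1^*\alp\bullet p_2^*\be$ for the projections $p_i$ from $X\times Y$, which is commutative by the Commutativity axiom, associative by $A_1$ and $A_3$, and unital.

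Once the operations are fixed, the following axioms are immediate. (A1) is the functoriality $A_3$ of bivariant pullback, applied to the identity squares above (their composite is again independent, by axiom (i)). (A5) is just (O-1) and (O-2). (A4) is (O-5) applied to the square above, which sends the pulled-back fiber-object to $f^*M$. (A3) follows by writing out the Gysin pushforward, pushing $\theta(f)$ inside $\phi(f^*M)$ by (O-3), and then applying (O-4). (A7) follows from $A_3$ and the relevant instance of $A_{13}$: pullback along $f\times g$ distributes over $\alp\times\be=q_1^*\alp\bullet q_2^*\be$ and, by functoriality together with the evident identities $q_1\circ(f\times g)=f\circ p_1$ and $q_2\circ(f\times g)=g\circ p_2$, turns $q_i^*$ into $p_i^*$ applied to the pullbacks. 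Finally (A8) follows by turning $p_1^*(\phi(L)(\alp))$ into $\phi(p_1^*L)(p_1^*\alp)$ via (O-5) and then absorbing $p_2^*\be$ into $\phi(p_1^*L)$ by (O-3).

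The two items needing a little care are (A2) and (A6). For (A2) one expands the Gysin pushforward $g_*$, commutes the bivariant pullback $f^*$ past the bivariant pushforward along $g$ by $A_{23}$, and then uses $A_{13}$ to distribute $f^*$ over the product $\alp\bullet\theta(g)$; the two sides match precisely because the stable orientation gives $f^*\theta(g)=\theta(g')$ for the base change $g'$ of $g\in\Cal S$. For (A6) one expands $\alp\times\be=p_1^*\alp\bullet p_2^*\be$ and both Gysin pushforwards and is reduced to the orientation identity $\theta(f\times g)=(\text{pullback of }\theta(f))\bullet(\text{pullback of }\theta(g))$ over the relevant tower of fiber squares; this comes from the factorization $f\times g=(f\times\op{id})\circ(\op{id}\times g)$, the multiplicativity of $\theta$ from Definition \ref{canonical}(i), and the stable-orientation property, after which $A_{12}$, $A_{13}$, $A_{23}$ and the projection formula $A_{123}$ assemble the equality.

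I expect the diagram chase in (A6) --- together with the minor unit check in (D4) --- to be the only real work; but it runs over exactly the tower of fiber squares appearing in Proposition \ref{external}, and it is the very computation already carried out for $\bB^*(X\xrightarrow{\pi_X}S)$ in Theorem \ref{obt-overcategory}(A6) and, in the case $S=pt$, for $\bB_*$ in \cite[Proposition 2.3]{Yokura-obt}. So no new obstacle appears, and, in the spirit of the omitted proof of Proposition \ref{external}, the verification may be left to the reader.
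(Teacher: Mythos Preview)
Your proposal is correct and matches the paper's approach: the paper gives no explicit proof of this proposition, merely introducing it with ``We also point out that the contravariant functor $\bB^*(X)$ \ldots\ have similar properties as above,'' thereby indicating that the verification proceeds exactly as for Theorem \ref{obt-overcategory} (itself stated without proof and ultimately referred back to \cite[Proposition 2.3]{Yokura-obt}). Your sketch supplies precisely those omitted details, invoking the same bivariant axioms $A_1$--$A_{123}$, the orientation axioms (O-1)--(O-5), and the stable orientation $\theta$, so there is nothing to add.
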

\begin{rem} {} 
\begin{enumerate} 
\item Let $S=pt$. Then  in the above proposition $\bB^*( X  \xrightarrow{\pi_X} S) =\bB^*( X  \xrightarrow{p_X} pt) $ is replaced by $\bB_*(X)$ and all these properties exactly correspond to the properties $(D1), \cdots (D4)$ and $(A1), \cdots, (A8)$ of \emph{Levine-Morel's oriented Borel--Moore functor with products} \cite{LM}, except $(D1)$ which requires that the functor $\bB_*(X)$ is supposed to be additive, i.e., $\bB_*(X \sqcup Y) = \bB_*(X) \oplus \bB_*(Y).$  In order to deal with such requirements, in \cite[Rwemark 2.5]{Yokura-obt} we introduce the notion of \emph{additive bivariant theory}. 
\item Our oriented bivariant theory is a kind of \emph{bivariant-theoretic generalization} of Levine-Morel's oriented Borel--Moore functor with products. In fact, as shown in \cite[Proposition 2.4]{Yokura-obt}, having observed that given a bivariant theory $\bB$ both the covariant functor $\bB_*(X)$ and the contravariant functor $\bB^*(X)$ satisfy the properties $(D1)',(D2), (D4)$, $(A1), (A2), (A6), (A7)$ (having nothing to do with orientation $\phi(L)$) was a motivation for introducing an oriented bivariant theory so that the oriented bivariant theory satisfies the other properties $(D3), (A3), (A4), (A5), (A8)$ involving the orientation $\phi(L)$. 
\item Even in the case of an oriented bivariant theory $\mathbb {OB}$ for a general situation, the special functors $\mathbb{OB}^*(X \xrightarrow {\pi_X} S)$ (of $\mathcal V/S$), $\mathbb {OB}_*(X)$ and $\mathbb {OB}_*(X)$ shall be also called oriented Borel--Moore functors with products.
\end{enumerate}
\end{rem}

\begin{cor} The abelian group ${\bOM^{\Cal C} _{\Cal S}}_*(X):= {\bOM^{\Cal C} _{\Cal S}}(X \to pt)$
is the free abelian group generated by the set of  isomorphism classes of cobordism cycles
$$[V  \xrightarrow{h_V} X;L_1, \cdots, L_r]$$
such that $h_V:V \to X \in \Cal C$ and $V \to pt$ is a specialized map in $\Cal S$ and $L_i$ is a fiber-object over $V$. 
The abelian group ${\bOM^{\Cal C} _{\Cal S}}^*(X):= {\bOM^{\Cal C} _{\Cal S}}(X  \xrightarrow{\op {id}_X} X)$
is the free abelian group generated by the set of  isomorphism classes of cobordism cycles
$$[V  \xrightarrow{h_V} X;L_1, \cdots, L_r]$$
such that $h_V:V \to X \in \Cal C \cap \Cal S$ and $L_i$ is fiber-object over $V$. Both functor ${\bOM^{\Cal C} _{\Cal S}}_*$ and ${\bOM^{\Cal C} _{\Cal S}}^*$  are oriented Borel--Moore functors with products in the sense of Levine--Morel.
\end{cor}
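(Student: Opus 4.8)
The plan is to derive this corollary directly from Theorem \ref{obt} by specializing the morphism $f: X \to Y$ in $\bOM^{\Cal C}_{\Cal S}(X \xrightarrow{f} Y)$ to the two extreme cases $X \to pt$ and $X \xrightarrow{\op{id}_X} X$, and then checking that the resulting structures are precisely Levine--Morel's $(D1)$--$(D4)$, $(A1)$--$(A8)$. First I would unwind the definition: by Theorem \ref{obt}, $\bOM^{\Cal C}_{\Cal S}(X \xrightarrow{f} Y)$ is freely generated by isomorphism classes $[V \xrightarrow{h} X; L_1, \dots, L_r]$ with $h \in \Cal C$, $f \circ h \in \Cal S$, and $L_i$ a fiber-object over $V$. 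Setting $f$ to be the structure map $\pi_X: X \to pt$, the condition ``$f \circ h \in \Cal S$'' becomes ``$V \to pt \in \Cal S$'', giving the first description; setting $f = \op{id}_X$, the condition becomes ``$h: V \to X \in \Cal S$'', so combined with $h \in \Cal C$ we get $h \in \Cal C \cap \Cal S$, giving the second description. These are immediate from the generators of the universal oriented bivariant theory, so this step is essentially just reading off definitions.

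Next I would verify that ${\bOM^{\Cal C}_{\Cal S}}_*$ and ${\bOM^{\Cal C}_{\Cal S}}^*$ carry the four data and satisfy the eight axioms of an oriented Borel--Moore functor with products. Here I would invoke the Proposition immediately preceding the corollary (the one listing $(D1)'$, $(D2)$, $(D3)$, $(D4)$, $(A1)$--$(A8)$ for $\bB^*(X) = \bB^*(X \xrightarrow{\op{id}_X} X)$) together with Theorem \ref{obt-overcategory} in the case $S = pt$ (which, as the Remark following it notes, recovers exactly the covariant functor $\bB_*(X)$ and its $(D1)$--$(D4)$, $(A1)$--$(A8)$). Since $\bOM^{\Cal C}_{\Cal S}$ is itself an oriented bivariant theory by part (1) of Theorem \ref{obt}, applying these general propositions to $\bB = \bOM^{\Cal C}_{\Cal S}$ yields that both associated functors satisfy all the required properties; one only needs to note that the specialized maps to $pt$ are the appropriate analogue of ``smooth quasi-projective'' (resp. ``smooth'') source schemes in the Levine--Morel setting, so that the pushforward/pullback/orientation/exterior-product operations coincide with theirs. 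The only genuine content is matching the bookkeeping of the grading (Definition \ref{grading}, $\bB_i(X) = \bB^{-i}(X \xrightarrow{\op{id}_X} X)$) and the additivity caveat.

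The main obstacle I anticipate is the additivity requirement in $(D1)$: Levine--Morel demand that $\bB_*(X \sqcup Y) = \bB_*(X) \oplus \bB_*(Y)$, but a general bivariant theory need not satisfy this, and indeed the Remark after the preceding Proposition explicitly flags that $(D1)$ fails in general and points to the notion of \emph{additive bivariant theory} from \cite[Remark 2.5]{Yokura-obt}. So the honest proof must either (a) restrict to the subclass of $S$-schemes (or the relevant category $\Cal V$) where disjoint unions exist and the free-abelian-group construction manifestly splits over connected components — which is in fact true for $\bOM^{\Cal C}_{\Cal S}$ since a cobordism cycle $[V \xrightarrow{h} X \sqcup Y]$ decomposes according to which component of $X \sqcup Y$ the (connected pieces of the) source maps into — or (b) simply remark, as the paper does elsewhere, that $\bOM^{\Cal C}_{\Cal S}$ is additive by construction. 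I would take route (a): observe that because the generators are isomorphism classes of honest morphisms $h: V \to X$, grouping generators by the image component gives a natural direct-sum decomposition, establishing additivity for $\bOM^{\Cal C}_{\Cal S}$ specifically even though it may fail for an abstract $\bB$. With that point settled, the rest of the verification is routine transcription of the already-established bivariant identities, and no further difficulty should arise.
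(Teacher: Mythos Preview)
Your proposal is correct and follows essentially the same implicit route as the paper: the corollary is stated without proof precisely because the description of generators is an immediate specialization of Theorem \ref{obt}, while the Borel--Moore structure is exactly what Theorem \ref{obt-overcategory} (for $S=pt$) and the subsequent Proposition on $\bB^*(X)$ supply when applied to $\bB=\bOM^{\Cal C}_{\Cal S}$; the Remark right before the corollary already flags the additivity caveat you address. Your explicit handling of additivity via the decomposition of generators over components is a small addition beyond what the paper writes, but it is the natural and correct way to fill in that one loose end.
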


\begin{cor} (A universal oriented Borel--Moore functor with products) 
Let $\Cal {BT}$ be a class of oriented additive bivariant theories $\bB$ on the same category $\Cal V$ with a class $\Cal C$ of confined morphisms, a class of independent squares, a class $\Cal S$ of specialized maps and $\Cal L$ a fibered category over $\Cal V$. Let $\Cal S$ be \underline {nice} canonical $\bOB$-orientable for any oriented bivariant theory $\bOB \in \Cal {OBT}$. Then, for each oriented bivariant theory $\bOB \in \Cal {OBT}$ with an orientation $\phi$,
\begin{enumerate}
\item there exists a unique natural transformation of oriented Borel--Moore functors with products
$${\gamma _{\bOB}}_* : {\bOM^{\Cal C} _{\Cal S}}_* \to \bOB_*$$
such that if $\pi_X:X \to pt$ is in  $\Cal S$ 
$${\gamma _{\bOB}}_* [X  \xrightarrow{\op {id}_X} X; L_1, \cdots, L_r] = \phi (L) \circ \cdots \circ \phi (L_r) ({\pi_X}^*(1_{pt})), \,\, \text{and} $$
\item there exists a unique natural transformation of oriented Borel--Moore functors with products
$${\gamma _{\bOB}}^* : {\bOM^{\Cal C} _{\Cal S}}^* \to \bOB^*$$
such that for any object $X$ 
$${\gamma _{\bOB}}^* [X  \xrightarrow{\op {id}_X} X; L_1, \cdots, L_r] = \phi (L) \circ \cdots \circ \phi (L_r) (1_X).$$
\end{enumerate}
\end{cor}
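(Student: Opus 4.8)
The plan is to derive this corollary as a direct specialization of Theorem~\ref{obt} (the universal oriented bivariant theory), so almost all the work has already been done; what remains is to check that the universal Grothendieck transformation $\ga_{\bOB}:\bOM^{\Cal C}_{\Cal S}\to\bOB$ restricts to the two claimed natural transformations and that the stated normalization formulas are exactly what the bivariant normalization specializes to. First I would recall from the preceding corollary the explicit description of ${\bOM^{\Cal C}_{\Cal S}}_*(X)=\bOM^{\Cal C}_{\Cal S}(X\to pt)$ and ${\bOM^{\Cal C}_{\Cal S}}^*(X)=\bOM^{\Cal C}_{\Cal S}(X\xrightarrow{\op{id}_X}X)$ as free abelian groups on cobordism cycles, together with the fact (also from that corollary) that both are oriented Borel--Moore functors with products in the sense of Levine--Morel; this is what makes the phrase ``natural transformation of oriented Borel--Moore functors with products'' meaningful in the statement.

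Next I would apply Theorem~\ref{obt}(2): for each $\bOB\in\Cal{OBT}$ with orientation $\phi$, there is a unique oriented Grothendieck transformation $\ga_{\bOB}:\bOM^{\Cal C}_{\Cal S}\to\bOB$ satisfying the normalization $\ga_{\bOB}([X\xrightarrow{\op{id}_X}X;L_1,\cdots,L_r])=\phi(L_1)\circ\cdots\circ\phi(L_r)(\theta_{\bOB}(f))$ for $f:X\to Y\in\Cal S$. I would then set ${\gamma_{\bOB}}_*:={\ga_{\bOB}}(-\to pt)$ and ${\gamma_{\bOB}}^*:={\ga_{\bOB}}(-\xrightarrow{\op{id}}-)$. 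Because a Grothendieck transformation preserves pushforward, pullback, product and (being \emph{oriented}) the Chern operators $\phi(L)$, each of these restrictions automatically commutes with all the data $(D1)'$--$(D4)$ and hence is a natural transformation of oriented Borel--Moore functors with products; this is a routine unwinding of the definition of Grothendieck transformation against the operations listed in Theorem~\ref{obt}(1) and in the ``$\bB^*(X\xrightarrow{\pi_X}S)$'' and ``$\bB^*(X)$'' propositions above. For the normalization statements: in case (2) one takes $Y=X$, $f=\op{id}_X$, and $\theta_{\bOB}(\op{id}_X)=1_X$, so $\ga_{\bOB}$ evaluated on $[X\xrightarrow{\op{id}_X}X;L_1,\cdots,L_r]$ inside $\bOB^*(X)=\bOB(X\xrightarrow{\op{id}_X}X)$ is $\phi(L_1)\circ\cdots\circ\phi(L_r)(1_X)$, which is the asserted formula. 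In case (1) one instead evaluates in $\bOB_*(X)=\bOB(X\to pt)$; here the relevant generator coming from the identity is $[X\xrightarrow{\op{id}_X}X;L_1,\cdots,L_r]$ \emph{viewed in the covariant group}, and tracing through how the bivariant $\ga_{\bOB}$ acts on it — using that the orientation of $\pi_X:X\to pt$ in $\bOB$ is $\theta_{\bOB}(\pi_X)$ and that, under the Gysin identification, $\theta_{\bOB}(\pi_X)$ corresponds to $\pi_X^*(1_{pt})$ — yields $\phi(L_1)\circ\cdots\circ\phi(L_r)(\pi_X^*(1_{pt}))$, matching the claim (the typographical ``$\phi(L)$'' in the displayed formula should read ``$\phi(L_1)$'').

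Uniqueness in both cases I would get from uniqueness in Theorem~\ref{obt}(2): any natural transformation of oriented Borel--Moore functors with products satisfying the stated normalization on the distinguished generators $[X\xrightarrow{\op{id}_X}X;L_1,\cdots,L_r]$ must agree with ${\gamma_{\bOB}}_*$ (resp.\ ${\gamma_{\bOB}}^*$) on those generators, and since every cobordism cycle $[V\xrightarrow{h}X;L_1,\cdots,L_r]$ is the image under pushforward $h_*$ (resp.\ the Gysin pushforward, using $h\in\Cal C\cap\Cal S$) of such a distinguished generator on $V$ — with the $\phi(L_i)$ absorbed via compatibility $(A3)$ of the operator with pushforward — naturality forces agreement on all generators, hence everywhere by linearity.

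The main obstacle I anticipate is purely bookkeeping rather than conceptual: making precise, in case (1), the passage from the \emph{bivariant} group $\bOM^{\Cal C}_{\Cal S}(X\to pt)$ and the bivariant $\ga_{\bOB}$ to the \emph{covariant} Borel--Moore picture, and in particular checking that the Gysin/pushforward structure used to express an arbitrary cycle $[V\xrightarrow{h}X;\dots]$ in terms of the identity generator on $V$ is compatible with $\ga_{\bOB}$ — i.e.\ that $\ga_{\bOB}(h_*(\text{generator}))=h_*(\ga_{\bOB}(\text{generator}))$ and that the $\phi(L_i)$'s pass through correctly via $(A3)$. Once that compatibility is laid out, together with the observation that $\theta_{\bOB}(\pi_X)=\pi_X^*(1_{pt})$ (the orientation of $\pi_X$ is $\pi_X^*$ applied to the bivariant unit $1_{pt}\in\bOB^0(pt\to pt)$), the normalization formulas and uniqueness fall out immediately, and the proof is essentially a one-paragraph invocation of Theorem~\ref{obt} plus the preceding corollary.
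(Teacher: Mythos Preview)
Your proposal is correct and follows exactly the approach the paper intends: the corollary is stated without proof because it is a direct specialization of Theorem~\ref{obt}(2) to the covariant group $\bOM^{\Cal C}_{\Cal S}(X\to pt)$ and the contravariant group $\bOM^{\Cal C}_{\Cal S}(X\xrightarrow{\op{id}_X}X)$, using $\theta_{\bOB}(\op{id}_X)=1_X$ and $\theta_{\bOB}(\pi_X)=\theta_{\bOB}(\pi_X)\bullet 1_{pt}=\pi_X^*(1_{pt})$. One small clean-up: in your uniqueness argument you do not actually need axiom $(A3)$; it suffices to write $[V\xrightarrow{h}X;L_1,\cdots,L_r]=h_*\bigl([V\xrightarrow{\op{id}_V}V;L_1,\cdots,L_r]\bigr)$ and invoke naturality with respect to $h_*$ together with the normalization on $V$ (the $L_i$ live on $V$, so no projection-formula manipulation is required).
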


\section{A universal oriented bivariant theory on schemes}
Now, from this section on, instead of considering a general situation we consider the category $Sch$ of schemes and the category of schemes over a fixed scheme $S$ is nothing but the over category $Sch/S$. Of course, we can consider the category $\mathcal V_{\mathbb C}$ of complex algebraic varieties and the over category $\mathcal V_{\mathbb C}/S$ over a fixed variety. In this category, a confined morphism is a proper morphism, a specialized morphism is a smooth morphism, an independent square is a fiber square or fiber product, and a fiber-object $L$ over $X$ is a line bundle over $X$.

In this setup, our universal oriented bivariant theory $\mathbb {OM}^{\mathcal C}_{\mathcal S}(X \to Y)$ shall be denoted by $\mathcal Z^*(X \to Y)$
mimicking the notation used in \cite{LM}:
\begin{defn}\label{def1} We define 
$$\Cal Z^*(X  \xrightarrow{\pi_X}  S)$$
to be the free abelian group generated by the set of isomorphism classes of cobordism cycles $[V  \xrightarrow{h} X; L_1, L_2, \cdots, L_r]$ over $X$, where $L_i \, (1 \leqq i \leqq r)$ is a line bundle over $V$, 
such that 
\begin{enumerate}
\item $h:V \to X$ is proper,
\item the composite $\pi_X \circ h: V \to S$ is smooth. 
\end{enumerate}
\end{defn}

So far we never pay attention to the grading, so we define the grading as follows:

\begin{defn}\label{def2} The grading $i$ of the graded group $\Cal Z^i(X  \xrightarrow{\pi_X}  S)$ is defined by:
$$[V \xrightarrow {h}  X; L_1, \cdots, L_r] \in \Cal Z^i (X  \xrightarrow{\pi_X} S)\Longleftrightarrow -i+r= \op{dim}(\pi_X \circ h),$$
where $\op{dim}(\pi_X \circ h)$ is the relative dimension of the smooth morphism $\pi_X \circ h$, i.e. the dimension of the (smooth) fiber of 
$\pi_X \circ h$ , which is equal to $\op{dim}V - \op{dim} S$.
\end{defn}

\begin{rem} Such a grading is due to the requirement that for $S =pt$ we want to have $\Cal Z^i(X  \xrightarrow{\pi_X}  pt) = \Cal Z_{-i}(X)$ (see Definition \ref{grading}). According to the definition (\cite[Definition 2.1.6]{LM}) of grading of Levine-Morel's algebraic pre-cobordism $\Cal Z_*(X)$, the degree (or dimension) of the cobordism cycle $[V \xrightarrow {h}  X; L_1, \cdots, L_r] \in \Cal Z_*(X)$ is $\op{dim} V - r$, i.e.

$[V \xrightarrow {h}  X; L_1, \cdots, L_r] \in \Cal Z_{-i}(X) \Longleftrightarrow -i = \op{dim} V - r$, namely, $-i + r = \op{dim} V$. 
\end{rem}

Then we have the following theorem, which is just a rewritten of Theorem \ref{obt} with a bit more detailed information, in particular gradings and with a different notation for the ``orientation" $\phi(L)$, however we write it down again for the sake of reader.
\begin{thm}[\cite{Yokura-obt}] (A universal oriented bivariant theory on schemes)
\begin{enumerate}
\item The assignment $\mathcal Z^*$ becomes an oriented bivariant theory if the bivariant operations are defined as follows:

\begin{enumerate}
\item 
{\bf Orientation $\widetilde c_1$}: For a morphism $f:X \to Y$ and a line bundle $L$ over $X$, the ``Chern operator"
$$\widetilde c_1 (L):\mathcal Z^i( X  \xrightarrow{f}  Y) \to \mathcal Z^{i+1}( X  \xrightarrow{f}  Y) $$
is defined by 
$$\widetilde c_1 (L)([V  \xrightarrow{h} X; L_1, L_2, \cdots, L_r]):=[V  \xrightarrow{h_V} X; L_1, L_2, \cdots, L_r, h^*L].$$

\item {\bf Product}: For morphisms $f: X \to Y$ and $g: Y
\to Z$, the product operation
$$\bullet: \mathcal Z^i( X  \xrightarrow{f}  Y) \otimes \mathcal Z^j ( Y  \xrightarrow{g}  Z) \to
\mathcal Z^{i+j} ( X  \xrightarrow{gf}  Z)$$
is  defined as follows: The product on generators is defined by
\begin{align*}
& [V  \xrightarrow{h} X; L_1, \cdots, L_r]  \bullet [W  \xrightarrow{k} Y; M_1, \cdots, M_s] \\
& :=  [V'  \xrightarrow{h \circ k''}  X; {k''}^*L_1, \cdots,{k''}^*L_r, (f' \circ {h'})^*M_1, \cdots, (f' \circ {h'})^*M_s ],\
\end{align*}
and it extends bilinearly. Here we consider the following fiber squares
$$\CD
V' @> {h'} >> X' @> {f'} >> W \\
@V {k''}VV @V {k'}VV @V {k}VV\\
V@>> {h} > X @>> {f} > Y @>> {g} > Z .\endCD
$$

\item {\bf Pushforward}: For morphisms $f: X \to Y$
and $g: Y \to Z$ with $f$ a proper morphism, the pushforward
$$f_*: \mathcal Z^i( X  \xrightarrow{gf} Z) \to \mathcal Z^i( Y  \xrightarrow{g}  Z) $$
is  defined by
$$f_*\left ([V  \xrightarrow{h} X; L_1, \cdots, L_r]  \right) := [V  \xrightarrow{f \circ h} Y; L_1, \cdots, L_r].$$

\item {\bf Pullback}: For a fiber square \quad 
$\CD
X' @> g' >> X \\
@V f' VV @VV f V\\
Y' @>> g > Y, \endCD
$

the pullback
$$g^*:\mathcal Z^i( X  \xrightarrow{f} Y) \to \mathcal Z^i( X'  \xrightarrow{f'} Y') $$
is  defined by
$$g^*\left ([V  \xrightarrow{h} X; L_1, \cdots, L_r] \right):=  [V'  \xrightarrow{{h'}}  X'; {g''}^*L_1, \cdots, {g''}^*L_r],$$
where we consider the following fiber squares:
$$\CD
V' @> g'' >> V \\
@V {h'} VV @VV {h}V\\
X' @> g' >> X \\
@V f' VV @VV f V\\
Y' @>> g > Y. \endCD\\
$$
\end{enumerate}

\item  Let $\Cal {OBT}$ be a class of oriented bivariant theories $\bOB$ on the category $Sch$ of schemes with the same class $\Cal C$ of proper morphisms, the class of fiber squares, the class $\Cal S$ of smooth morphisms and $\mathcal L$ consisting of line bundles over $Sch$. Let $\Cal S$ be stably $\bOB$-oriented for any oriented bivariant theory $\bOB \in \Cal {OBT}$. Then, for each oriented bivariant theory $\bOB \in \Cal {OBT}$ with an orientation $\widetilde c_1$  there exists a unique oriented Grothendieck transformation
$$\ga_{\bOB} : \mathcal Z \to \bOB$$
such that for any $f: X \to Y \in \Cal S$ the homomorphism
$\ga_{\bOB} : \mathcal Z(X  \xrightarrow{f}  Y) \to \bOB(X  \xrightarrow{f}  Y)$
satisfies the normalization condition that $$\ga_{\bOB}([X  \xrightarrow{\op {id}_X}  X; L_1, \cdots, L_r]) = \widetilde c_1(L_1) \circ \cdots \circ \widetilde c_1(L_r) (\theta_{\bOB}(f)).$$
\end{enumerate}
\end{thm}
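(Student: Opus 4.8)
The theorem is essentially a specialization of Theorem~\ref{obt} (the universal oriented bivariant theory) to the category $Sch$, with $\Cal C$ the proper morphisms, $\Cal S$ the smooth morphisms, the independent squares being all fiber squares, and $\Cal L$ the fibered category of line bundles. So the plan is \emph{not} to re-prove everything from scratch, but rather to (i) verify that $(Sch,\Cal C,\text{fiber squares},\Cal S,\Cal L)$ satisfies the hypotheses needed in Theorem~\ref{obt} (e.g. that proper maps and smooth maps are each closed under composition and base change and contain identities, that fiber squares satisfy axioms (i)--(ii), and that $\Cal S$ is ``nice'' canonical $\bOB$-orientable --- indeed \emph{stably} $\bOB$-orientable --- for the class $\Cal{OBT}$ under consideration), and then (ii) check that the grading conventions of Definitions~\ref{def1} and~\ref{def2} are consistent with the bivariant operations, i.e. that the Chern operator $\widetilde c_1(L)$ raises degree by one, that $\bullet$ is additive in degree, and that pushforward and pullback preserve degree. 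Granting these, part~(1) is immediate from Theorem~\ref{obt}(1) and part~(2) from Theorem~\ref{obt}(2).

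First I would recall the standard facts from algebraic geometry: proper morphisms are stable under composition and base change and contain all identities (so $\Cal C$ is a legitimate class of confined maps), smooth morphisms have the same stability properties (so $\Cal S$ is a legitimate class of specialized maps), and fiber products exist in $Sch$ and all fiber squares trivially satisfy the pasting axiom (i) and contain the two degenerate square types of axiom~(ii). Next I would address $\bOB$-orientability: for a bivariant theory $\bOB$ arising ``in nature'' on schemes, a smooth morphism $f$ of relative dimension $d$ carries a canonical orientation $\theta_{\bOB}(f)\in\bOB^{-d}(X\xrightarrow f Y)$ compatible with composition (the relative fundamental class / relative orientation), and this is exactly the content of the phrase ``$\Cal S$ is stably $\bOB$-oriented''; this is assumed as a hypothesis on the class $\Cal{OBT}$, so nothing needs to be proved here beyond unwinding the definition.

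The genuinely new bookkeeping, and the step I expect to be the main (though routine) obstacle, is checking compatibility of the grading with the four operations. For the Chern operator this is built into the definition: appending $h^*L$ to $[V\xrightarrow h X;L_1,\dots,L_r]$ increases $r$ by one while leaving $\dim(f\circ h)$ fixed, so $\widetilde c_1(L)$ sends $\Cal Z^i$ to $\Cal Z^{i+1}$. For pushforward along a proper $f:X\to Y$ over $Z$, the source $V$ and the composite $V\to Z$ are unchanged, so the degree $i$ (with $-i+r=\dim$ of the composite to $Z$) is preserved. For pullback along a fiber square, $V'=V\times_X X'$ and one uses that $f'\circ h':V'\to Y'$ is again smooth of the same relative dimension as $f\circ h$ (smoothness and relative dimension being stable under base change), and $r$ is unchanged, so degree is preserved. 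The only slightly delicate case is the \textbf{product}: with notation as in the fiber-square diagram, one must verify that $-\,(i+j)+(r+s)=\dim(h\circ k'')$ composed to $Z$, i.e. $\dim(V'\to Z)=\dim(V\to Y)+\dim(W\to Z)$ where $\dim$ denotes relative dimension; this follows from additivity of relative dimension under composition of smooth morphisms together with $V'\to V\times_X W$ being (a base change making) the relevant relative dimensions add, and from the original hypotheses $-i+r=\dim(V\to Y)$ and $-j+s=\dim(W\to Z)$. Once this numerology is confirmed, everything else --- associativity of $\bullet$, functoriality of $f_*$ and $g^*$, the compatibility axioms $(A_1)$--$(A_{123})$, and the five orientation axioms $(O\text{-}1)$--$(O\text{-}5)$ for $\widetilde c_1$ --- is inherited verbatim from Theorem~\ref{obt}, and the universal property in part~(2), including the normalization $\ga_{\bOB}([X\xrightarrow{\op{id}_X}X;L_1,\dots,L_r])=\widetilde c_1(L_1)\circ\cdots\circ\widetilde c_1(L_r)(\theta_{\bOB}(f))$, is the specialization of Theorem~\ref{obt}(2) to this setting.
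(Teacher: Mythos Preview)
Your proposal is correct and matches the paper's approach exactly: the paper states this theorem as ``just a rewritten of Theorem~\ref{obt} with a bit more detailed information, in particular gradings'' and provides no separate proof, only a remark verifying that the product $\bullet$ respects the grading via precisely the additivity-of-relative-dimension argument you outline. Your identification of the product grading as the ``only slightly delicate case'' is spot on --- that is the one computation the paper actually writes out.
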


\begin{rem} Here we point out that in the product operation $\bullet$, the grading is correct. Indeed, $[V  \xrightarrow{h} X; L_1, \cdots, L_r] \in \mathcal Z^i( X  \xrightarrow{f}  Y)$ and $[W  \xrightarrow{k} Y; M_1, \cdots, M_s] \in \mathcal Z^j ( Y  \xrightarrow{g}  Z)$ imply that we have
$$ -i+r = \op{dim}(f \circ h) \quad \text{and} \quad  -j+s = \op{dim}(g \circ k).$$
Hence $-i+r +(-j+s) = \op{dim}(f \circ h) +\op{dim}(g \circ k).$ Thus we have
\begin{align*}
& -(i+j)+ (r+s)  = \op{dim}(f \circ h) +\op{dim}(g \circ k)\\
& = \op{dim}(f' \circ h') +\op{dim}(g \circ k) \quad \text{(the relative dimension is preserved by the pullback)} \\
& = \op{dim}(g \circ k \circ f' \circ h')\\
& = \op{dim}(g \circ f \circ h \circ k'') \quad \text{($g \circ k \circ f' \circ h'= g \circ f \circ h \circ k''$)}\\
& = \op{dim}((g \circ f) \circ (h \circ k'')).
\end{align*}
Thus we have
\begin{align*}
& [V  \xrightarrow{h} X; L_1, \cdots, L_r]  \bullet [W  \xrightarrow{k} Y; M_1, \cdots, M_s] \\
& =  [V'  \xrightarrow{h \circ k''}  X; {k''}^*L_1, \cdots,{k''}^*L_r, (f' \circ {h'})^*M_1, \cdots, (f' \circ {h'})^*M_s ] \in \mathcal Z^{i+j} ( X  \xrightarrow{g \circ f}  Z)
\end{align*}
\end{rem}

Then as in Proposition \ref{external} we have the following external product on the over category $Sch/S$:
$$\times_S: \Cal Z^i( X  \xrightarrow{\pi_X} S) \times \Cal Z^j( Y  \xrightarrow{\pi_Y} S) \to \Cal Z^{i+j}( X \times_SY \xrightarrow {\pi_X \times_S \pi_Y} S),$$
which we recall is defined by: for $\alp \in \Cal Z^i( X  \xrightarrow{\pi_X} S) $ and $\be \in  \Cal Z^j( Y  \xrightarrow{\pi_Y} S)$
$$\pi_Y^*(\alp) \bullet \be$$
where we consider the fiber square
$$\CD
X \times_S Y @> {p_2} >> Y \\
@V p_1 VV @VV \pi_Y V\\
X @>> \pi_X >S.\endCD
$$
More precisely, we have
\begin{align*}
& [V  \xrightarrow{h} X; L_1, \cdots, L_s]\times_S [W  \xrightarrow{k} X; M_1, \cdots, M_t]\\
& :=[V \times_S W \xrightarrow {k' \circ \widetilde h} X \times_SY; (\widetilde {p_1}\widetilde k )^*L_1, \cdots, (\widetilde {p_1}\widetilde k)^*L_s, (\widetilde {p_2}\widetilde h )^*M_1, \cdots, (\widetilde {p_2}\widetilde h)^*M_t]
\end{align*}
$$\CD
V \times _S W @> {\widetilde h}>> \widetilde W @> {\widetilde{p_2}}>> W\\
@V {\widetilde k} VV @VV {k'} V@VV k V\\
\widetilde V@>> {h'} > X \times_BY@>> {p_2}  > Y\\
@V {\widetilde{p_1}} VV @VV {p_1}  V@VV {\pi_Y} V\\
V@>> h > X@>> {\pi_X} > S.  \endCD
$$ 
If we restrict the oriented bivariant theory $\mathcal Z^*$ to the over category $Sch/S$, we have the following theorem, which is just Theorem \ref{obt-overcategory} rewritten with a bit more detailed information, in particular gradings and with a different notation for the ``orientation" $\phi(L)$, however we write it down again for the sake of reader.
\begin{thm}\label{pro1}(cf. \cite[Definition 2.1.2, Definition 2.1.10]{LM})\qquad
\begin{enumerate}
\item[(D1)] Let $\Cal {AB}$ be the category of abelian groups. Then on the subcategory $Sch'/S \subset Sch/S$ of proper morphisms, $\Cal Z^*(- \xrightarrow{\pi_{-}} S): Sch'/S \to \Cal {AB}$ is an additive functor.
Here, for a proper morphism $f: X \to Y$ from $\pi_X:X \to S$ to $\pi_Y:Y \to S$, 
the pushforward
$$f_*: \Cal Z^i( X  \xrightarrow{\pi_X} S) \to \Cal Z^i ( Y  \xrightarrow{\pi_Y} S) $$
defined by
$f_*[V  \xrightarrow{h} Y; L_1, \cdots, L_k]:=  [V  \xrightarrow{f \circ h} X; L_1, \cdots, L_k]$ is well-defined. \\
\item[(D2)] For a smooth morphism $f: X \to Y$ from $\pi_X:X \to S$ to $\pi_Y:Y \to S$, 
the pullback
$$f^*: \Cal Z^i( Y  \xrightarrow{\pi_Y} S) \to \Cal Z^{i-\op{dim}f} ( X  \xrightarrow{\pi_X} S) $$
defined by
$f^*[W  \xrightarrow{k} Y; L_1, \cdots, L_r]:=  [W'  \xrightarrow{k'} X; (f')^*L_1, \cdots, (f')^*L_r]$
is well-defined, where we use the following fiber square
$$\CD
W'@> {f'}>> W \\
@V {k'} VV @VV k V\\
X@>> f > Y, \endCD
$$ 
\item[(D3)] For a line bundle $L$ over $X$, the operator
$$\widetilde c_1 (L):\Cal Z^i( X  \xrightarrow{\pi_X}  S) \to \Cal Z^{i+1}( X  \xrightarrow{\pi_X}  S) $$
defined by 
$\widetilde c_1(L)([V  \xrightarrow{h} X; L_1, \cdots, L_k]):=[V  \xrightarrow{h} X; L_1, \cdots, L_k, h^*L]$
is well-defined.\\
\item[(D4)] The above external product 
$$\times_S: \Cal Z^i( X  \xrightarrow{\pi_X} S) \times \Cal Z^j( Y  \xrightarrow{\pi_Y} S) \to \Cal Z^{i+j}( X \times_SY  \xrightarrow{\pi_X \times_S \pi_Y} S)$$
 is commutative, associative and admits $1:=[S \xrightarrow {\op{id}_S} S] \in \Cal Z^0(S \xrightarrow {\op{id}_S} S)$.\\
\item[(A1)] For smooth morphisms $f: X \to Y$ from $\pi_X:X \to S$ to $\pi_Y:Y \to S$ and $g:Y \to Z$ from $\pi_Y:Y \to S$ to $\pi_Z:Z \to S$, we have
$$(g \circ f)^* = f^* \circ g^*: \Cal Z^i(Z \xrightarrow {\pi_Z} S) \to \Cal Z^{i -\op{dim}f -\op{dim}g}(X \xrightarrow {\pi_X} S) .$$
\item[(A2)] For a fiber square
$$\CD
W@> {g'}>> X \\
@V {f'} VV @VV f V\\
Y@>> g > Z \endCD
$$ 
where $f$ is proper and $g$ is smooth, we have that $g^*\circ f_* = (f')_*(g')^*$, 
i.e. the following diagram commutes:
$$\CD
\Cal Z^i( X  \xrightarrow{\pi_X} S) @> {{g'}^*}>> \Cal Z^{i-\op{dim}g'}( W  \xrightarrow{\pi_W} S)  \\
@V {f_*} VV @VV {f'}_* V\\
\Cal Z^i( Z  \xrightarrow{\pi_Z} S) @>> {g^*} > \Cal Z^{i-\op{dim}g}( Y  \xrightarrow{\pi_Y} S) . \endCD
$$ 
Here we note that $\op{dim}g = \op{dim}g'$.
\\
\item[(A3)] For a proper morphism $f:X \to Y$ from $\pi_X:X \to S$ to $\pi_Y:Y \to S$ and a line bundle $M$ over $Y$, 
$$ f_* \circ \widetilde c_1(f^*M) = \widetilde c_1(M) \circ f_*: \Cal Z^i( X  \xrightarrow{\pi_X} S) \to \Cal Z^{i+1}( Y  \xrightarrow{\pi_Y} S) .$$

\item[(A4)] For a smooth morphism $f:X \to Y$ from $\pi_X:X \to S$ to $\pi_Y:Y \to S$ and a line bundle $M$ over $Y$, 
$$ \widetilde c_1(f^*M) \circ f^* = f^* \circ \widetilde c_1(M): \Cal Z^i( Y  \xrightarrow{\pi_Y} S) \to \Cal Z^{i+1-\op{dim}f}( X  \xrightarrow{\pi_X} S) .$$

\item[(A5)] Let $L$ and $L'$ be two line bundles over $X$, then we have
$$\widetilde c_1(L) \circ \widetilde c_1(L') = \widetilde c_1(L') \circ \widetilde c_1(L) :\Cal Z^i( X  \xrightarrow{\pi_X} S)  \to \Cal Z^{i+2} ( X  \xrightarrow{\pi_X} S). $$
Moreover,  if $L$ and $L'$ are isomorphic, then we have that $\widetilde c_1(L) = \widetilde c_1(L')$.

\item[(A6)] For proper morphisms $f:X_1 \to X_2$ from $\pi_{X_1}:X_1 \to S$ to $\pi_{X_2}:X_2 \to S$ and $g:Y_1 \to Y_2$ from $\pi_{Y_1}:Y_1 \to S$ to $\pi_{Y_2}:Y_2 \to S$, and $\alp \in \Cal Z^*( X_1  \xrightarrow{\pi_{X_1}} S)$ and $\be \in \Cal Z^*(Y_1  \xrightarrow{\pi_{Y_1}} S)$,
we have
$$f_*\alp \times_S g_*\be = (f \times_S g)_*(\alp \times_S \be).$$

\item[(A7)] For smooth morphisms $f:X_1 \to X_2$ from $\pi_{X_1}:X_1 \to S$ to $\pi_{X_2}:X_2 \to S$ and $g: Y_1 \to Y_2$ from $\pi_{Y_1}:Y_1 \to S$ to $\pi_{Y_2}:Y_2 \to S$,
$\alp \in \Cal Z^*(X_2  \xrightarrow{\pi_{X_2}} S)$ and $\be \in \Cal Z^*(Y_2  \xrightarrow{\pi_{Y_2}} S)$,
we have
\item[(A8)] For a line bundle $L$ and $\alp \in \Cal Z^i( X  \xrightarrow{\pi_X} S) $ and $\be \in \Cal Z^j( Y  \xrightarrow{\pi_Y} S) $, 
 $$\widetilde c_1(L) (\alp) \times_S \be = \widetilde c_1({p_1}^*L)(\alp \times_S \be) \in \Cal Z^{i+j+1}( X \times_SY  \xrightarrow{\pi_X \times_S \pi_Y} S) .$$
\end{enumerate}
\end{thm}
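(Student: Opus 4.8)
The plan is to recognise the statement as Theorem \ref{obt-overcategory} applied to a single oriented bivariant theory, together with the bookkeeping of the gradings of Definition \ref{def2}. Take $\Cal V = Sch$, with $\Cal C$ the proper morphisms, $\Cal S$ the smooth morphisms, the Cartesian squares as independent squares, and $\Cal L$ the fibered category of line bundles. First I would check that this data meets the hypotheses of Theorem \ref{obt}: both proper and smooth morphisms are closed under composition and base change and contain the identities, a pullback of a line bundle is a line bundle, and Cartesian squares satisfy conditions (i), (ii) for independent squares. Then $\bOM^{\Cal C}_{\Cal S}(X\xrightarrow{f}Y)$ is precisely $\Cal Z^*(X\xrightarrow{f}Y)$ of Definition \ref{def1}; its operations $\widetilde c_1(L)$, $\bullet$, $f_*$, $g^*$ are the instances of the four operations of Theorem \ref{obt}; and, after restricting to $Sch/S$ and equipping it with the exterior product $\times_S$ of Proposition \ref{external}, Theorem \ref{obt-overcategory} yields (D1), (D3), (D4) and (A1)--(A8). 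It then remains to supply the orientation underlying the Gysin pullback (D2), and to verify the degree shifts; these are the only things not already contained in Theorems \ref{obt}, \ref{obt-overcategory} or Proposition \ref{external}.

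The key step is the \emph{stable} orientation of the smooth morphisms. For a smooth $f:X\to Y$ over $S$ set $\theta(f):=[X\xrightarrow{\op{id}_X}X]\in\Cal Z^*(X\xrightarrow{f}Y)$, with no line bundles; since $-i+0=\op{dim}(f\circ\op{id}_X)=\op{dim}f$ this cycle lies in $\Cal Z^{-\op{dim}f}(X\xrightarrow{f}Y)$. Substituting $V=X$, $h=\op{id}_X$ into the formulas for the product and the pullback on generators (for which all the relevant fiber squares degenerate) gives $\theta(\op{id}_X)=1_X$, $\theta(g\circ f)=\theta(f)\bullet\theta(g)$, and $\theta(f')=g^*\theta(f)$ for every base change $g$, so $\Cal S$ is stably $\Cal Z^*$-orientable. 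The Gysin homomorphism $f^*(\alp):=\theta(f)\bullet\alp$ of (D2) is thus defined, and because $\bullet$ carries $\Cal Z^{-\op{dim}f}\otimes\Cal Z^j$ into $\Cal Z^{j-\op{dim}f}$ it drops degree by $\op{dim}f$ as asserted; unwinding $\bullet$ with $V=X$, $h=\op{id}_X$ recovers the stated formula $[W\xrightarrow{k}Y;L_1,\cdots]\mapsto[W'\xrightarrow{k'}X;(f')^*L_1,\cdots]$ in (D2).

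Granting this, the rest is a matter of unwinding definitions while tracking the two integers $r$ and $\op{dim}(\pi_X\circ h)$ of a cycle $[V\xrightarrow{h}X;L_1,\cdots,L_r]$. The bivariant pushforward changes neither, so (D1) is degree-preserving; its functoriality is axiom ($A_2$), and additivity holds because a proper $h:V\to X_1\sqcup X_2$ with $\pi\circ h$ smooth splits along $V=h^{-1}(X_1)\sqcup h^{-1}(X_2)$. The operator $\widetilde c_1(L)=\phi(L)$ raises $r$ by one, hence $i$ by one, which is (D3); (D4) is Proposition \ref{external} with unit $[S\xrightarrow{\op{id}_S}S]\in\Cal Z^0(S\xrightarrow{\op{id}_S}S)$. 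Functoriality of the Gysin pullback (A1) comes from $\theta(g\circ f)=\theta(f)\bullet\theta(g)$ and associativity ($A_1$), with degree drop $\op{dim}f+\op{dim}g$ because the relative dimension of a smooth morphism is additive under composition; (A2) comes from the stability $\theta(g')=f^*\theta(g)$ of the orientation and the projection formula ($A_{123}$), with $\op{dim}g=\op{dim}g'$ because relative dimension is preserved by base change; (A3) is orientation axiom (O-4); (A4) is the second identity of (O-3) along $X\xrightarrow{f}Y\xrightarrow{\pi_Y}S$ with first argument $\theta(f)$; (A5) is (O-1) and (O-2); (A6) and (A7) are parts (1) and (2) of Proposition \ref{external}; and (A8) is the second identity of (O-3) along $X\times_S Y\xrightarrow{p_1}X\xrightarrow{\pi_X}S$ together with $\alp\times_S\be=\pi_X^*\be\bullet\alp$, the extra $\widetilde c_1$ producing the degree $i+j+1$.

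The main obstacle, such as it is, is purely organisational: on the geometric side one must know that the relative dimension of a smooth morphism is additive under composition and invariant under base change --- this is exactly what keeps the degree shifts in (D2), (A1), (A2) and the degree additivity of $\bullet$ mutually consistent --- and on the formal side one must expand the ``mixed'' statements (A2), (A4), (A8) correctly, replacing a Gysin operation by $\theta(f)\bullet(-)$ or the exterior product by $\pi^*(-)\bullet(-)$ and then applying the right bivariant axiom or the right branch of (O-3)--(O-5), keeping straight which object the fiber-object in question lies over. Once the orientation $\theta(f)=[X\xrightarrow{\op{id}_X}X]$ is in place, each individual verification is routine.
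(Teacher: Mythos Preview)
Your proposal is correct and follows essentially the same approach as the paper: the paper explicitly introduces Theorem \ref{pro1} as ``just Theorem \ref{obt-overcategory} rewritten with a bit more detailed information, in particular gradings,'' and the only thing it spells out afterward is the well-definedness and degree shift in (D2), which is exactly the orientation-and-grading computation you single out as the key step. If anything, your write-up is more explicit than the paper's own treatment, in that you pin down the stable orientation $\theta(f)=[X\xrightarrow{\op{id}_X}X]$ and trace each of (A1)--(A8) back to the specific bivariant axiom or branch of (O-3)--(O-5) it comes from.
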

\begin{rem} We just make a remark on (D2). Let us consider the following commutative diagram
$$\xymatrix{
W' \ar[d]_{k'} \ar[rr]^{f'} && W\ar[d]^k\\ 
X\ar[dr]_ {\pi_X}\ar[rr]^ {f} && Y \ \ar[dl]^{\pi_Y}\\
&S.}$$
\begin{align*}
\pi_X \circ k' & = (\pi_Y \circ f) \circ k' \\
& = \pi_Y \circ (f \circ k') \\
& = \pi_Y \circ (k \circ f')\\
& = (\pi_Y \circ k) \circ f'.
\end{align*}
Since $f'$ is smooth as the pullback of the smooth map $f$ (which is the given condition) and $\pi_Y \circ k$ is smooth, $\pi_X \circ k'$ is smooth. As to the grading, we can see it as follows. Suppose that $[W  \xrightarrow{k} Y; L_1, \cdots, L_r] \in \Cal Z^i( Y  \xrightarrow{\pi_Y} S)$, i.e.,  $-i+r = \op{dim} (\pi_Y \circ k)$. From which we get
\begin{align*}
-i+r + \op{dim}(f) & = \op{dim} (\pi_Y \circ k) + \op{dim}(f)\\
& = \op{dim} (\pi_Y \circ k) + \op{dim}(f') \quad \text{(since $\op{dim}(f) = \op{dim}(f')$)}\\
& = \op{dim} (\pi_Y \circ k \circ f') \\
& = \op{dim}(\pi_Y \circ f \circ k')\\
& = \op{dim}((\pi_Y \circ f) \circ k'))\\
& = \op{dim}(\pi_X \circ k').
\end{align*}
Namely we have that $-(i - \op{dim}(f)) +r = \op{dim}(\pi_X \circ k')$, which implies that
$$[W'  \xrightarrow{k'} X; (f')^*L_1, \cdots, (f')^*L_r] \in \Cal Z^{i-\op{dim}(f)}( X  \xrightarrow{\pi_X} S).$$
\end{rem}

\begin{rem}
We note that if $S$ is a point $pt =\op{Spec} k$, $\Cal Z^{-i}(X \xrightarrow {p_X} pt)$ is Levine--Morel's oriented Borel--Moore functor with products $\Cal Z_i(X)$ on $Sch_k$ \cite[Definition 2.1.6.]{LM}.
\end{rem}
\begin{defn} A functor $A^*$ assigning $A^*( X  \xrightarrow{\pi_X} S)$ to a $S$-scheme $\pi_X:X \to S$ satisfying all the properties in the above theorem is called \emph{an oriented Borel--Moore functor with products} on the category of $S$-schemes, i.e. the over category $Sch/S$.
\end{defn}

\begin{thm}\label{universality} $\Cal Z^*(- \xrightarrow{\pi_{-}} S)$ is the universal oriented Borel--Moore functor with products on $Sch_S$ in the sense that for any oriented Borel--Moore functor $A^*(- \to S)$ with products there exists a unique natural transformation $\tau_{A_*}:\Cal Z^*(- \to S) \to A^*(- \to S)$ with the requirement $\tau_{A_*}([S \xrightarrow {\op{id}_S} S]) = 1_S \in A^0(S \xrightarrow {\op{id}_S} S)$, where $1_S$ is the unit. 
\end{thm}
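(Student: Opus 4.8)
The plan is to build $\tau_{A^*}$ explicitly on generators and to obtain uniqueness from the way an arbitrary cobordism cycle decomposes inside $\Cal Z^*$ itself. The key preliminary observation is that, for a generator $[V \xrightarrow{h} X; L_1, \cdots, L_r]$ of $\Cal Z^*(X \xrightarrow{\pi_X} S)$ (so that $h$ is proper and $\pi_V := \pi_X \circ h$ is smooth), the very definitions of pullback, $\widetilde c_1$ and pushforward recorded in Theorem \ref{pro1} give
\begin{gather*}
\pi_V^*\bigl([S \xrightarrow{\op{id}_S} S]\bigr) = [V \xrightarrow{\op{id}_V} V],\\
\widetilde c_1(L_1)\circ\cdots\circ\widetilde c_1(L_r)\bigl([V \xrightarrow{\op{id}_V} V]\bigr) = [V \xrightarrow{\op{id}_V} V; L_1, \cdots, L_r],\\
h_*\bigl([V \xrightarrow{\op{id}_V} V; L_1, \cdots, L_r]\bigr) = [V \xrightarrow{h} X; L_1, \cdots, L_r]
\end{gather*}
(the order of the factors $\widetilde c_1(L_i)$ being immaterial by (A5)). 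Consequently any natural transformation that is compatible with the proper pushforward, the smooth pullback and the operator $\widetilde c_1$ and that sends $[S \xrightarrow{\op{id}_S} S]$ to $1_S$ is \emph{forced} to send $[V \xrightarrow{h} X; L_1, \cdots, L_r]$ to $h_*\bigl(\widetilde c_1(L_1)\circ\cdots\circ\widetilde c_1(L_r)(\pi_V^*(1_S))\bigr)$, where now $\pi_V^*$, $\widetilde c_1$ and $h_*$ denote the operations carried by $A^*$. This already gives uniqueness, and for existence I would take this formula as the definition of $\tau_{A^*}$ on generators.

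The first point to settle is that this assignment depends only on the isomorphism class of the cobordism cycle, so that it extends $\bZ$-linearly to a homomorphism $\Cal Z^*(X \xrightarrow{\pi_X} S) \to A^*(X \xrightarrow{\pi_X} S)$; a degree count (pullback along $\pi_V$ lowers degree by $\op{dim}\pi_V$, each $\widetilde c_1$ raises it by $1$, and pushforward preserves it) shows at the same time that this homomorphism is grading preserving, a generator of degree $i = r - \op{dim}\pi_V$ being sent into $A^i$. If $\phi\colon V \xrightarrow{\sim} V'$ is an isomorphism over $X$ with $\phi^*L_i' \cong L_i$, then $h = h'\circ\phi$ and $\pi_V = \pi_{V'}\circ\phi$, and the equality of the two expressions follows by pushing $\phi_*$ past the $\widetilde c_1$'s using (A3), replacing $\widetilde c_1(L_i)$ by $\widetilde c_1(\phi^*L_i')$ using (A5), moving $\pi_V^*$ past $\phi$ using (A1), and using that $\phi_*\circ\phi^* = \op{id}$ for an isomorphism $\phi$ — the last being itself a short consequence of (A1) and the base-change axiom (A2). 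This produces $\tau_{A^*}\colon \Cal Z^*(-\to S) \to A^*(-\to S)$.

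It then remains to check that $\tau_{A^*}$ commutes with the four structure operations (D1)--(D4) of an oriented Borel--Moore functor with products; the normalization $\tau_{A^*}([S \xrightarrow{\op{id}_S} S]) = 1_S$ is built into the definition. Compatibility with the proper pushforward is immediate from functoriality of pushforward in $A^*$. Compatibility with $\widetilde c_1$ is obtained by writing $\widetilde c_1(L)[V \xrightarrow{h} X; L_1, \cdots, L_r] = [V \xrightarrow{h} X; L_1, \cdots, L_r, h^*L]$, applying the defining formula, and then pulling $\widetilde c_1(h^*L)$ out of the pushforward by (A3) and commuting it past the other Chern operators by (A5). Compatibility with the smooth pullback $g^*$, for a morphism $g\colon X'\to X$ of $Sch/S$, follows from the fiber squares defining $g^*[V \xrightarrow{h} X; L_1, \cdots, L_r] = [V' \xrightarrow{h'} X'; {g''}^*L_1, \cdots, {g''}^*L_r]$, together with the base-change identity (A2) in the form $g^* h_* = h'_* {g''}^*$, the identity (A4) in the form ${g''}^*\widetilde c_1(L_i) = \widetilde c_1({g''}^*L_i)\,{g''}^*$, and (A1) applied to $\pi_{V'} = \pi_V\circ g''$.

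The step I expect to be the main obstacle is compatibility with the external product, $\tau_{A^*}(\alp \times_S \be) = \tau_{A^*}(\alp) \times_S \tau_{A^*}(\be)$. For generators $\alp = [V \xrightarrow{h} X; L_1, \cdots, L_s]$ and $\be = [W \xrightarrow{k} Y; M_1, \cdots, M_t]$ one would expand $\alp \times_S \be$ by means of the explicit description of $\times_S$ displayed just before Theorem \ref{pro1} (the one with the array of fiber squares over $S$, in which $k'\circ\widetilde h = h\times_S k$ and in which $(\widetilde{p_1}\widetilde k)^*L_i$ and $(\widetilde{p_2}\widetilde h)^*M_j$ are the pullbacks of $L_i$ and $M_j$ along the two projections of $V\times_S W$), and then reassemble $\tau_{A^*}(\alp)\times_S\tau_{A^*}(\be)$ using (A6) (pushforward commutes with $\times_S$), (A7) (smooth pullback commutes with $\times_S$, applied to the structure maps $\pi_V$ and $\pi_W$ together with $1_S \times_S 1_S = 1_S$), and (A8) (the operator $\widetilde c_1$ commutes with $\times_S$ once the line bundle is pulled back along the appropriate projection, the right-hand version being obtained from the commutativity of $\times_S$), with repeated use of (A2). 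There is no conceptual difficulty here beyond carefully matching the fiber products $V\times_S W$, the various projections and the pulled-back line bundles on the two sides; once the diagram is laid out consistently the two expressions coincide, and the universality of $\Cal Z^*(-\to S)$ follows.
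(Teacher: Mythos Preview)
Your proposal is correct and follows the same approach as the paper: decompose each generator of $\Cal Z^*(X\xrightarrow{\pi_X}S)$ as a composition of the structural operations applied to the unit $[S\xrightarrow{\op{id}_S}S]$, derive uniqueness from this, and define $\tau_{A^*}$ by the corresponding formula in $A^*$. You supply considerably more verification detail than the paper (which does not spell out the compatibilities with the four operations), and you apply the Chern operators before $h_*$ rather than after as in the paper's displayed formula --- your order is the one that makes direct sense, since the $L_i$ are line bundles over $V$ rather than over $X$.
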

\begin{proof}\label{Rem} Let $[V \xrightarrow h X, L_1, \cdots L_r] \in \Cal Z^i(X \xrightarrow {\pi_X} S)$. Noticing that 
$$(\pi_X \circ h)^*([S \xrightarrow {\op{id}_S} S]) = [V \xrightarrow {\op{id}_V} V] \in \Cal Z^{- \op{dim}(\pi_X \circ h)}(V \xrightarrow {\pi_X \circ h} S),$$
 we get the following equality:
\begin{align*}
[V \xrightarrow h X, L_1, \cdots L_r] & = \widetilde {c_1}(L_1) \circ \dots \circ \widetilde {c_1}(L_r) \circ h_* ([V \xrightarrow {\op{id}_V} V]) \\
& = \widetilde {c_1}(L_1) \circ \dots \circ \widetilde {c_1}(L_r) \circ h_*\circ (\pi_X \circ h)^*([S \xrightarrow {\op{id}_S} S])
\end{align*}
Then we can define the homomorphism $\tau_{A^*}:\Cal Z^*(X\xrightarrow {\pi_X} S) \to A^*(X\xrightarrow {\pi_X} S)$ by
$$\tau_{A^*}([V \xrightarrow h X, L_1, \cdots L_r]):= \widetilde {c_1}(L_1) \circ \dots \circ \widetilde {c_1}(L_r) \circ h_*\circ (\pi_X \circ h)^* 1_S.$$
Then the uniqueness of $\tau_{A^*}$ follows from its naturality and the condition that $\tau_{A_*}([S \xrightarrow {\op{id}_S} S]) = 1_S$.
For the sake of clarity we write down the following sequence of the above homomophisms:
$$\xymatrix{
\Cal Z^0(S \xrightarrow {\op{id}_S} S) 
\ar[r]_{\tau_{A*}} 
\ar[d]_{(\pi_X \circ h)^*}  & \qquad A^0(S \xrightarrow {\op{id}_S} S) 
\ar[d]^{(\pi_X \circ h)^*} \\
\Cal Z^{- \op{dim}(\pi_X \circ h)}(V \xrightarrow {\pi_X \circ h} S) \ar[r]_{\tau_{A*}} 
\ar[d]_{h_*} & A^{- \op{dim}(\pi_X \circ h)}(V \xrightarrow {\pi_X \circ h} S) \ar[d]^{h_*}\\
\Cal Z^{- \op{dim}(\pi_X \circ h)}(X \xrightarrow {\pi_X} S) \ar[r]_{\tau_{A*}} 
\ar[d]_{\widetilde {c_1}(L_r)} & A^{- \op{dim}(\pi_X \circ h)}(X \xrightarrow {\pi_X} S) \ar[d]^{\widetilde {c_1}(L_r)} \\
\Cal Z^{- \op{dim}(\pi_X \circ h)+1}(X \xrightarrow {\pi_X} S) \ar[r]_{\tau_{A*}} 
\ar[d]_{\widetilde {c_1}(L_1) \circ \dots \circ \widetilde {c_1}(L_{r-1})} & A^{- \op{dim}(\pi_X \circ h)+1}(X \xrightarrow {\pi_X} S) \ar[d]^{\widetilde {c_1}(L_1) \circ \dots \circ \widetilde {c_1}(L_{r-1})}   \\
\Cal Z^i(X \xrightarrow {\pi_X} S) \ar[r]_{ \tau_{A^*}} & A^i(X \xrightarrow {\pi_X} S)
}
$$
Here we use the fact that $-i+r = \op{dim}(\pi_X \circ h)$.
\end{proof}

\begin{rem} It is clear that if $S$ is a point, $A_*(X) := A^{-*}(X \xrightarrow{p_X} pt)$ is Levine--Morel's oriented Borel--Moore functor with products. $\mathcal Z_*(-)$ is the universal one among all the oriented Borel--Moore functor with products in the sense that for any oriented Borel--Moore functor $A_*$ with products there exists a unique natural transformation $\tau_{A_*}:\mathcal Z_* \to A_*$ with the requirement $\tau_{A_*}([pt \xrightarrow {\op{id}_{pt}} pt]) = 1_{pt} \in A_*(pt)$, where $1_{pt}$ is the unit. The proof of this is adopted in the proof of the above theorem.
\end{rem}


\begin{rem} We note that in Levine--Morel's algebraic cobordism $\Omega_*(X)$, \emph{the particular cobordism cycle $[X \xrightarrow {\op{id}_X} X; L_1, \cdots, L_r]$ belongs to the algebraic pre-cobordism $\Cal Z_*(X)$ if and only if $X$ is smooth because of the definition of $\Cal Z_*(X)$.} 
In our case we have that $[X \xrightarrow {\op{id}_X} X; L_1, \cdots, L_r]$ belongs to $\Cal Z^*(X \xrightarrow {\pi_X} S)$ if and only if $\pi_X:X \to S$ is smooth. We also note that $[X \xrightarrow {\op{id}_X} X; L_1, \cdots, L_r]$ always belongs to $\Cal Z^*(X \xrightarrow {\op{id}_X} X)$
whether $X$ is smooth or singular.
\end{rem}
\section{Algebraic cobordism $\Omega^*(X \to S)$ of $S$-schemes}
Levine and Morel \cite{LM} construct their algebraic cobordism $\Omega_*(X)$ from the oriented Borel--More functor $\mathcal Z_*(X)$ imposing three axioms, (Dim) the dimension axiom, (Sect) the section axiom and (FGL) Formal group law axiom). In this section, from the above oriented Borel-Moore functor 
$\mathcal Z^*(X \xrightarrow {\pi_X} S)$ on the over category $Sch/S$ we construct an ``algebraic cobordism" $\Omega^*(X \to S)$ on the over category $Sch/S$ by imposing relative versions of these three axioms. In other words $\Omega^*(X \to S)$ is an algebraic cobordism of $S$-schemes.

As we will see later, the construction in this section does not give a bivariant-theoretic analogue of the algebraic cobordism $\Omega_*(X)$. 

First we  define the following relative analogues of Levine--Morel's definitions \cite[Definition 2.1.12, Definition 2.2.1]{LM}:
\begin{defn} Let $R_*$ be a commutative graded ring with unit. An oriented Borel--Moore functor with products $R_*$-theory $A(X \xrightarrow{ \pi_X} S)$ is one together with a graded ring homomorphism
$$\Phi: R_* \to A(S \xrightarrow {\op{id}_S} S).$$
\end{defn}

\begin{rem} In the above definition we should note that the external product on $A(S \xrightarrow {id_S} S)$ gives a ring structure.
\end{rem}

Let $\bL_*$ be the Lazard ring homologically graded and let $F_{\bL}(u,v) \in \bL_*[[u,v]]$ denote the universal formal group law.

\begin{defn} An oriented Borel--Moore functor with products $\bL_*$-theory $A$ is called ``of geometric type" if the following three axioms are satisfied:
\begin{enumerate}
\item(rel-Dim = Relative Dimension Axiom) For any smooth morphism $\pi_X:X \to S$ and any family $\{L_1, L_2, \cdots, L_n\}$ of line bundles on $X$ with $n > \op{dim}(\pi_X)$, one has
$$ \widetilde c_1(L_1) \circ \widetilde c_1(L_2) \circ \cdots \circ \widetilde c_1(L_n) (\pi_X^*1_S) =0 \in A^*(X \xrightarrow {\pi_X} S).$$
Here we note that $\pi_X^*1_S\in A^{-\op{dim}(\pi_X)}(X \xrightarrow {\pi_X} S)$.
\item (rel-Sect = Relative Section Axiom) For any smooth morphism $\pi_X:X \to S$, any line bundle $L$ over $X$ and any section $s$ of $L$ which is transverse fiberwisely (with respect to the smooth map $\pi_X \circ h$) to the zero section of $L$ with $Z:= s^{-1}(0)$, i.e., $\pi_Z:=\pi_X|_Z: Z \to S$ is smooth, one has
$$\widetilde c_1(L)(\pi_X^*1_S) = {i_Z}_*(\pi_Z^*1_S)$$
Here $i_Z: Z \to X$ is the closed immersion (note: $\pi_Z= \pi_X \circ i_Z$).
\item(rel-FGL = Relative Formal Group Law Axiom) Let $\widetilde c_1:\bL_* \to A(pt \to pt)$ be the ring homomorphism giving the $\bL_*$-structure and let $F_A \in A(pt \to pt)$ be the image of the universal formal group law $F_A\in \bL_*[[u,v]]$ by $\Phi$. Then for any smooth morphism $\pi_X:X \to S$ and any pair $\{L, M \}$ of line bundles over $X$, one has
$$F_A(\widetilde c_1(L), \widetilde c_1(M))(\pi_X^*1_S) = \widetilde c_1(L \otimes M)(\pi_X^*1_S).$$
\end{enumerate}
\end{defn}

\begin{rem} In the above definitions, if the target scheme $S$ is a point, we recover Levine--Morel's original definitions.
\end{rem}

First we consider imposing the (rel-Dim) on $\Cal Z^*(X \xrightarrow{\pi_X} S)$.

\begin{defn}\label{keydef} We define the following subgroup of $\Cal Z^*(X \xrightarrow {\pi_X} S)$:
$$\langle \Cal R^{Dim} \rangle (X \xrightarrow {\pi_X} S)$$
is generated by the cobordism cycles of the form
$$[V \xrightarrow h X; \pi^*L_1, \pi^*L_2, \cdots, \pi^*L_r, M_1, \cdots M_s]$$
where
\begin{enumerate}
\item the following diagram commutes
\[\xymatrix {V \ar[r]^{h} \ar[dr]_{\pi} & X \ar[r]^{\pi_X} & S &&\\
 &S' \ar[ru]_{\nu}&}
\]
\item $\pi:V \to S'$ and $\nu:S' \to S$ are both smooth. 
\item $L_1, L_2, \cdots, L_r$ are line bundles over $S'$ and $r > \op{dim}\nu = \op{dim} S' - \op{dim} S$,
\item  $M_1, \cdots, M_s$ are line bundles over $V$.
\end{enumerate}
\end{defn}

\begin{rem} Let $L_i (1 \leqq i \leqq r)$  be any line bundle over the base scheme $S$. Then any cobordism cycle
$[V \xrightarrow h X; (\pi_X \circ h)^*L_1, (\pi_X \circ h)^*L_2, \cdots, (\pi_X \circ h)^*L_r, M_1, \cdots M_s]$ always belong to $\langle \Cal R^{Dim} \rangle(X \xrightarrow {\pi_X} S).$ Because we can consider the following obvious commutative diagram
\[\xymatrix {V \ar[r]^{h} \ar[dr]_{\pi_X \circ h} & X \ar[r]^{\pi_X} & S &&\\
 &S \ar[ru]_{\nu=\op{id}_S}&}
 \]
 and the condition (3) above is satisfied: $r\geqq 1 > \op{dim}(\nu) = \op{dim}(\op{id}_S) = 0.$
 
\end{rem}
\begin{defn} We define the following quotient
$$\underline {\Cal Z}^*(X \xrightarrow {\pi_X} S):= \frac{\Cal Z^*(X \xrightarrow {\pi_X} S)}{\langle \Cal R^{Dim} \rangle(X \xrightarrow {\pi_X} S)}.$$
The equivalence class of $[V \xrightarrow h X; L_1, L_2, \cdots, L_k] \in \Cal Z^*(X \xrightarrow {\pi_X} S)$ in the quotient group $\underline {\Cal Z}^*(X \xrightarrow {\pi_X} S)$ shall be denoted by
$[[V \xrightarrow h X; L_1, L_2, \cdots, L_k]]$. 
\end{defn}

\begin{rem}
If the target scheme $S$ is a point, then the above $\langle \Cal R^{Dim} \rangle(X \xrightarrow {\pi_X} S)$ is equal to the subgroup $\langle \Cal R^{Dim} \rangle(X)$ defined in \cite[Lemma 2.4.2]{LM}. Therefore we have
$$\langle \Cal R^{Dim} \rangle(X \xrightarrow {p_X} pt) =\langle \Cal R^{Dim} \rangle(X), \quad \underline {\Cal Z}^{-i}(X \xrightarrow {p_X} pt) = \underline {\Cal Z}_i(X).$$ 
\end{rem}

\begin{thm}\label{thm1}
For the above group $\underline {\Cal Z}^*(X \xrightarrow {\pi_X} S)$, we define the following four operations:
\begin{itemize}
\item {\bf External product}: The external product 
$$\times_S: \underline {\Cal Z}^*(X \xrightarrow {\pi_X} S) \times \underline {\Cal Z}^*(Y \xrightarrow {\pi_Y} S) \to \underline {\Cal Z}^*(X \times_S Y \xrightarrow {\pi_X \times_S \pi_Y} S)$$
is defined by
\begin{align*}
 [[V  \xrightarrow{h} X; L_1, \cdots, &L_s]]\times_S [[W  \xrightarrow{k} X; M_1, \cdots, M_t]]\\
 &:= \Bigl [[V  \xrightarrow{h} X; L_1, \cdots, L_s]\times_S [W  \xrightarrow{k} X; M_1, \cdots, M_t] \Bigr ].
 \end{align*}
\item {\bf Pushforward}: For a proper morphism $f: X \to Y$ from $\pi_X:X \to S$ to $\pi_Y:Y \to S$, 
the pushforward
$$\underline{f_*}: \underline {\Cal Z}^i( X  \xrightarrow{\pi_X} S) \to \underline {\Cal Z}^i ( Y  \xrightarrow{\pi_Y} S) $$
is defined by
$\underline{f_*}([[V  \xrightarrow{h} Y; L_1, \cdots, L_k]]):=  [f_*([V  \xrightarrow{f \circ h} X; L_1, \cdots, L_k])]$.
\item {\bf Pullback}: sFor a smooth morphism $f: X \to Y$ from $\pi_X:X \to S$ to $\pi_Y:Y \to S$, 
the pullback
$$\underline{f^*}: \underline {\Cal Z} ^i( Y  \xrightarrow{\pi_Y} S) \to \underline {\Cal Z}^{i-\op{dim}f} ( X  \xrightarrow{\pi_X} S) $$
is defined by
$\underline{f^*}([[W  \xrightarrow{k} Y; L_1, \cdots, L_r]]):=  [f^*([W  \xrightarrow{k} Y; L_1, \cdots, L_r])]$.

\item {\bf Orientation = the Chern operator $\underline {\widetilde c_1 (L)}$}: For a line bundle $L$ over $X$, the operator
$$\underline {\widetilde c_1 (L)}:\underline {\Cal Z}^i( X  \xrightarrow{\pi_X}  S) \to \underline {\Cal Z}^{i+1}( X  \xrightarrow{\pi_X}  S) $$
is defined by 
$\underline {\widetilde c_1 (L)}([[V  \xrightarrow{h} X; L_1, \cdots, L_k]]):= [\widetilde c_1 (L)([V  \xrightarrow{h} X; L_1, \cdots, L_k])]$.
\end{itemize}
Then we have that 
\begin{enumerate}
\item The above operations are well-defined.
\item The above theory $\underline {\Cal Z}^*(X \xrightarrow {\pi_X} S)$ is an oriented Borel--More functor with products, i.e. it satisfies all the properties (D1), $\cdots$, (D4) and (A1), $\cdots$, (A8).
\item The above theory $\underline {\Cal Z}^*(X \xrightarrow {\pi_X} S)$ satisfies (rel-Dim). \\
\end{enumerate}
\end{thm}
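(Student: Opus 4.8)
The plan is to reduce all three assertions to the single fact that the subgroup $\langle \Cal R^{Dim} \rangle(- \xrightarrow{\pi_-} S)$ is carried into itself by each of the four operations of $\Cal Z^*$ on the over category $Sch/S$. Granting this, assertion (1) is immediate, since a homomorphism (resp.\ a bilinear map) descends to a quotient exactly when it sends the relevant subgroup into the relevant subgroup; assertion (2) is then also immediate, because by construction the quotient map $\Cal Z^* \to \underline{\Cal Z}^*$ commutes with all four operations, so every one of the properties (D1)--(D4), (A1)--(A8) already established for $\Cal Z^*$ in Theorem \ref{pro1} passes verbatim to $\underline{\Cal Z}^*$; and assertion (3) is a short direct computation.

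To prove the stability fact I would argue on a generator $[V \xrightarrow{h} X; \pi^*L_1, \dots, \pi^*L_r, M_1, \dots, M_s]$ of $\langle \Cal R^{Dim} \rangle(X \xrightarrow{\pi_X} S)$, equipped with its witnessing factorization $\pi_X \circ h = \nu \circ \pi$ where $\pi\colon V \to S'$ and $\nu\colon S' \to S$ are smooth and $r > \op{dim}(\nu)$, and I would treat the four operations in turn. The Chern operator $\widetilde c_1(L)$ merely appends $h^*L$ to the list $M_1, \dots, M_s$, so the witnessing data is untouched and the output is again a generator. The pushforward along a proper $S$-morphism $f\colon X \to Y$ replaces $h$ by $f \circ h$ and changes nothing else; one keeps the same $S'$, $\pi$, $\nu$ and uses the defining relation $\pi_Y \circ f = \pi_X$ of a morphism of $Sch/S$ to get $\pi_Y \circ (f \circ h) = \pi_X \circ h = \nu \circ \pi$. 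The pullback along a smooth $S$-morphism $f\colon X \to Y$ replaces the cycle by $[W' \xrightarrow{k'} X; (f')^*\pi^*L_1, \dots]$ formed from the fiber square over $f$ with top arrow $f'$; here one keeps $S'$ and $\nu$, takes the new structure map $\pi \circ f'\colon W' \to S'$ (smooth, since $f'$ is a base change of the smooth $f$), and observes both $(f')^*\pi^*L_i = (\pi \circ f')^*L_i$ and $\pi_X \circ k' = \pi_Y \circ k \circ f' = \nu \circ \pi \circ f'$. Finally, for the exterior product $[V \xrightarrow{h} X; \dots] \times_S [W \xrightarrow{k} Y; N_1, \dots, N_t]$ the underlying scheme is $V \times_S W$ and each decorating bundle $\pi^*L_i$ becomes $(\pi \circ q)^*L_i$, where $q\colon V \times_S W \to V$ is the projection; since $q$ is the base change of the structure morphism $\pi_Y \circ k\colon W \to S$, which is smooth because $[W \xrightarrow{k} Y; \vec N] \in \Cal Z^*(Y \xrightarrow{\pi_Y} S)$, the composite $\pi \circ q$ is smooth, the factorization of the structure map of $V \times_S W$ through $\nu$ is immediate, and commutativity of $\times_S$ supplies the symmetric inclusion.

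For assertion (3), given a smooth $\pi_X\colon X \to S$ and line bundles $L_1, \dots, L_n$ on $X$ with $n > \op{dim}(\pi_X)$, I would note $\pi_X^*1_S = \pi_X^*[S \xrightarrow{\op{id}_S} S] = [X \xrightarrow{\op{id}_X} X]$ (the pullback being legitimate precisely because $\pi_X$ is smooth), hence $\widetilde c_1(L_1) \circ \cdots \circ \widetilde c_1(L_n)(\pi_X^*1_S) = [X \xrightarrow{\op{id}_X} X; L_1, \dots, L_n]$. This cycle is a generator of $\langle \Cal R^{Dim} \rangle(X \xrightarrow{\pi_X} S)$: in Definition \ref{keydef} take $V = S' = X$, $h = \pi = \op{id}_X$, $\nu = \pi_X$ and $s = 0$, so that condition (3) there reads exactly $n > \op{dim}(\nu) = \op{dim}(\pi_X)$. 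Therefore the class of this cycle in $\underline{\Cal Z}^*(X \xrightarrow{\pi_X} S)$ vanishes, which is (rel-Dim).

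The only place where genuine care is required is the exterior-product case of the stability fact: the exterior product is a pullback followed by a bivariant product, so one must juggle several fiber squares at once in order to see that the decorating bundles remain pulled back from $S'$ along a smooth morphism. The point that makes it work — and that likewise keeps the pullback case honest — is that every structure morphism to $S$ occurring in sight (that of $W$ over $S$, and all of its base changes) is smooth by the very definition of $\Cal Z^*(- \to S)$, so every composite of such maps stays smooth. The remaining cases are routine bookkeeping whose recurring theme is simply that a morphism of $Sch/S$ is, by definition, compatible with the structure maps.
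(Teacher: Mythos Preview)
Your proposal is correct and follows essentially the same route as the paper's own proof: both reduce part (1) to showing that each of the four operations on $\Cal Z^*$ preserves the subgroup $\langle \Cal R^{Dim}\rangle$, treat the pushforward, pullback, Chern operator and exterior product in turn with the same witnessing-diagram bookkeeping (the exterior product via the large fiber diagram and smoothness of the projection $V\times_S W \to V$ as a base change of a smooth structure map), and both derive (2) formally from (1) and handle (3) by exhibiting $[X \xrightarrow{\op{id}_X} X; L_1,\dots,L_n]$ as a generator of $\langle \Cal R^{Dim}\rangle$ via the choice $S'=X$, $\pi=\op{id}_X$, $\nu=\pi_X$.
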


\begin{proof} 
\begin{enumerate}

\item To show that the pushforward, the pullback and the orientation are well-defined, it suffices to show that each operation preserves $\langle \Cal R^{Dim} \rangle$, to be more precise, 
\begin{enumerate}
\item $f_*(\langle \Cal R^{Dim} \rangle(X \xrightarrow {\pi_X} S)) \subset \langle \Cal R^{Dim} \rangle(Y \xrightarrow {\pi_Y} S))$: 

Suppose that $[V \xrightarrow h X; \pi^*L_1, \cdots, \pi^*L_r, M_1, \cdots M_s] \in \langle \Cal R^{Dim} \rangle (X \xrightarrow {\pi_X} S)$ as in Definition \ref {keydef}. Then we have
$f_*([V \xrightarrow h X; \pi^*L_1, \pi^*L_2, \cdots, \pi^*L_r, M_1, \cdots M_s]) = [V \xrightarrow {f\circ h} Y; \pi^*L_1, \cdots, \pi^*L_r, M_1, \cdots M_s] \in \langle \Cal R^{Dim} \rangle(Y \xrightarrow {\pi_Y} S).$ Cf. the following commutative diagrama:
\[\xymatrix {V \ar[r]^{h} \ar[d]_{\pi} & X \ar[d]_{\pi_X} \ar[r]^f & Y \ar[dl]^{\pi_Y} \\
 S' \ar[r]_{\nu}& S }
\]

\item $f^*(\langle \Cal R^{Dim} \rangle(Y \xrightarrow {\pi_Y} S)) \subset \langle \Cal R^{Dim} \rangle(X \xrightarrow {\pi_X} S)$: Here we should note that $f:X \to Y$ is smooth, which is important. Suppose that $[W \xrightarrow k Y; \pi^*L_1, \cdots, \pi^*L_r, M_1, \cdots M_s] \in \langle \Cal R^{Dim} \rangle (Y \xrightarrow {\pi_Y} S)$ as in Definition \ref {keydef}. Consider the following commutative diagram
\[\xymatrix{
W' \ar[d]_{k'} \ar[r]^{f'} & W\ar[d]^k \ar[dr]^{\pi}&\\ 
X\ar[dr]_ {\pi_X}\ar[r]^ {f} & Y \ \ar[d]^{\pi_Y} & S' \ar[dl]^{\nu}\\
&S.}
\]
Then we have
\begin{align*}
 f^*([W \xrightarrow k Y; & \pi^*L_1,\cdots, \pi^*L_r, M_1, \cdots M_s]) \\
& = [W' \xrightarrow k' X; (f')^*\pi^*L_1, \cdots, (f')^*\pi^*L_r, (f')^*M_1, \cdots (f')^*M_s]\\
& =  [W' \xrightarrow k' X; (\pi \circ f')^*L_1, \cdots, (\pi \circ f')L_r, (f')^*M_1, \cdots (f')^*M_s]
\end{align*}
which belongs to $\langle \Cal R^{Dim} \rangle(X \xrightarrow {\pi_X} S)$.

\item $\widetilde c_1 (L)(\langle \Cal R^{Dim} \rangle(X \xrightarrow {\pi_X} S)) \subset \langle \Cal R^{Dim} \rangle(X \xrightarrow {\pi_X} S)$: It is clear.

\item As to the external product, we need to show that
\begin{enumerate}
\item $\Cal Z^*(X \xrightarrow {\pi_X} S) \times_S \langle \Cal R^{Dim} \rangle(Y \xrightarrow {\pi_Y} S) \subset \langle \Cal R^{Dim} \rangle(X \times_S Y \xrightarrow {\pi_X \times_S \pi_Y} S)$,
\item $\langle \Cal R^{Dim} \rangle(X \xrightarrow {\pi_X} S) \times_S \Cal Z^*(Y \xrightarrow {\pi_Y} S) \subset \langle \Cal R^{Dim} \rangle(X \times_S Y \xrightarrow {\pi_X \times_S \pi_Y} S)$,
\item  $\langle \Cal R^{Dim} \rangle(X \xrightarrow {\pi_X} S) \times_S \langle \Cal R^{Dim} \rangle(Y \xrightarrow {\pi_Y} S) \subset \langle \Cal R^{Dim} \rangle(X \times_S Y \xrightarrow {\pi_X \times_S \pi_Y} S).$
\end{enumerate}
Since (iii) is a special case, it suffices to show (i) and (ii). For (ii), suppose that 

$[V \xrightarrow h X; \pi^*L_1, \cdots, \pi^*L_r, M_1, \cdots M_s] \in \langle \Cal R^{Dim} \rangle (X \xrightarrow {\pi_X} S)$ as in Definition \ref {keydef} and 
$[W \xrightarrow k ; N_1, \cdots, N_t] \in \Cal Z^*(Y \xrightarrow {\pi_Y} S)$. Then we consider the following commutative diagrams:
\[\xymatrix{
V \times _S W \ar[d]_{\widetilde k} \ar[rr]^{\widetilde h} && \widetilde W \ar[d]^{k'} \ar[rr]^{\widetilde{p_2}} && W \ar[d]^k\\
\widetilde V \ar[rr]^{h'} \ar[d]_{\widetilde{p_1}} && X \times_SY \ar[d]^{p_1} \ar[rr]^{p_2} && Y \ar[d]^{\pi_Y}\\
V \ar[rr]^h \ar[drr]_{\pi} && X \ar[rr]^{\pi_X} && S\\
&& S' \ar[urr]_{\nu} &\\
}
\]
\begin{align*}
& [V  \xrightarrow{h} X; \pi^*L_1, \cdots, \pi^*L_r, M_1, \cdots M_s]\times_S [W  \xrightarrow{k} X; N_1, \cdots, N_t]\\
& =[V \times_S W \xrightarrow {k' \circ \widetilde h} X \times_SY; (\widetilde {p_1}\widetilde k )^*L_1, \cdots, (\widetilde {p_1}\widetilde k )^*\pi^*L_r, \\
& \hspace{4cm} (\widetilde {p_1}\widetilde k )^*M_1, \cdots, (\widetilde {p_1}\widetilde k)^*M_s, (\widetilde {p_2}\widetilde h )^*N_1, \cdots, (\widetilde {p_2}\widetilde h)^*N_t]\\
& =[V \times_S W \xrightarrow {k' \circ \widetilde h} X \times_SY; (\pi \widetilde {p_1}\widetilde k )^*L_1, \cdots, (\pi \widetilde {p_1}\widetilde k )^*L_r, \\
& \hspace{4cm} (\widetilde {p_1}\widetilde k )^*M_1, \cdots, (\widetilde {p_1}\widetilde k)^*M_s, (\widetilde {p_2}\widetilde h )^*N_1, \cdots, (\widetilde {p_2}\widetilde h)^*N_t].
\end{align*}
Here we note that $\pi \widetilde {p_1}\widetilde k: V \times_S W \to S'$ is smooth because $\pi$ is smooth by hypothesis and $\widetilde{p_1}\circ \widetilde k:V \times _S W \to V$ is smooth since it is the pullback of the smooth morphism $\pi_Y \circ k:W \to S$. The proof of (i) is the same as this, so omitted.

\end{enumerate}

\item (D1), $\cdots$, (D4)  are already checked above, thus it suffices to see (A1), $\cdots$, (A8). But they follow from the definitions of these four operations. E.g., as to (A1), we can see it as follows: for $[[x]] \in \underline {\Cal Z}^i( X  \xrightarrow{\pi_X} S)$, where $[x] = [V  \xrightarrow{h} X; L_1, \cdots, L_s]$,

we have
\begin{align*}
\underline{(g \circ f)^*}([[x]]) & = [(g \circ f)^*([x])] \quad \text{(by the definition)} \\
& = [(f^* \circ g^*)([x])] \\
& = \underline{f^*}([g^*([x])]) \\
& =  \underline{f^*} \circ \underline {g^*} ([[x]])
\end{align*}
Thus we have $\underline{(g \circ f)^*} = \underline{f^*} \circ \underline {g^*}.$

\item In our case, since $1_S = [S \xrightarrow {\op{id}_S} S]$ and $\pi_X^*1_S = [X \xrightarrow {\op{id}_X} X]$, we have that
$$\widetilde c_1(L_1) \circ \widetilde c_1(L_2) \circ \cdots \circ \widetilde c_1(L_n) (\pi_X^*1_S)
= [X \xrightarrow {\op{id}_X} X; L_1, L_2, \cdots L_n]$$
with $\pi_X:X \to S$ is smooth.
Then we have the following obvious commutative diagram:
\[\xymatrix {X \ar[r]^{\op{id}_X} \ar[dr]_{\op{id}_X} & X \ar[r]^{\pi_X} & S &&\\
 &X \ar[ru]_{\nu=\pi_X}&}
 \]
Since $r>\op{dim}(\pi_X) = \op{dim}(\nu)$, we have that 
$$[X \xrightarrow {\op{id}_X} X; L_1, L_2, \cdots L_n] \in \langle \Cal R^{Dim} \rangle (X \xrightarrow {\pi_X} S).$$
Therefore we have that
$$[[X \xrightarrow {\op{id}_X} X; L_1, L_2, \cdots L_n]]=0 \in \underline {\Cal Z}^*(X \xrightarrow {\pi_X} S).$$
\end{enumerate}

\end{proof}

Next we impose the axiom (rel-Sect) on the above quotient group $\underline {\Cal Z^*}  ( X \xrightarrow {\pi_X} S )$.

\begin{defn}\label{rel-Sect} We define the following subgroup of $\underline {\Cal Z^*} (X \xrightarrow {\pi_X} S )$:
$$\langle \Cal R^{Sect} \rangle(X \xrightarrow {\pi_X} S )$$
is generated by elements of the form
$$[[V \xrightarrow h X; L_1, \cdots, L_r]] - [[Z \xrightarrow {h|_Z} X; i^*L_1, \cdots, i^*L_{r-1}]],$$
where
\begin{enumerate}
\item $r>0$
\item $Z=s^{-1}(0)$, where $s$ is a section of the line bundle $L_r$ which is transverse fiberwisely (with respect to the smooth map $\pi_X \circ h$) to the zero section of $L_r$ (hence $\pi_X \circ h|_Z$ is smooth) and $i: Z \hookrightarrow X$ is the inclusion and $h|_Z = h \circ i.$
\end{enumerate}
\end{defn}

\begin{defn} We define the following quotient group 
$$\underline {\Omega}^*(X \xrightarrow {\pi_X} S ):= \frac{\underline {\Cal Z}^*(X \xrightarrow {\pi_X} S )}{\langle \Cal R^{Sect} \rangle(X \xrightarrow {\pi_X} S )}.$$
The equivalence class of $[[V \xrightarrow h X; L_1, L_2, \cdots, L_k] ]\in \underline {\Cal Z_*}(X \xrightarrow {\pi_X} S )$ in the quotient group $\underline {\Omega}^*(X \xrightarrow {\pi_X} S )$ shall be denoted by
$\left [[[V \xrightarrow h X; L_1, L_2, \cdots, L_k]] \right]$.
\end{defn}

\begin{rem}
If the target scheme $Y$ is a point, then the above $\langle \Cal R^{Sect} \rangle(X \xrightarrow f Y)$ is equal to the subgroup $\langle \Cal R^{Sect} \rangle (X)$ defined in \cite[Lemma 2.4.7]{LM}. Therefore we have
$$\langle \Cal R^{Sect} \rangle(X \xrightarrow {p_X} pt) =\langle \Cal R^{Sect} \rangle(X), \quad \underline {\Omega}^{-i}(X \xrightarrow {p_X} pt) = \underline {\Omega}_i(X).$$ 
\end{rem}

\begin{thm}\label{thm2}
For the above group $\underline {\Omega}^*(X \xrightarrow {\pi_X} S )$, we define the following four operations as follows:
\begin{itemize}
\item (external product) The external product 
$$\times_S: \underline {\Omega}^*(X \xrightarrow {\pi_X} S ) \times \underline {\Omega}^*(Y \xrightarrow {\pi_Y} S ) \to \underline {\Omega}^*(X \times_S Y \xrightarrow {\pi_X \times_S \pi_Y} S)$$
is defined by

\begin{align*}
 \left [ [[V  \xrightarrow{h} X; L_1, \cdots, L_s ]] \right ]  & \times_S \left [ [[W  \xrightarrow{k} X; M_1, \cdots, M_t]] \right ] \\
 &:= \left [ [[V  \xrightarrow{h} X; L_1, \cdots, L_s]] \times_S [[W  \xrightarrow{k} X; M_1, \cdots, M_t]] \right].
 \end{align*}
\item (pushforward) For a proper morphism $f: X \to Y$ from $\pi_X:X \to S$ to $\pi_Y:Y \to S$, 
the pushforward
$$\underline{\underline{f_*}}: \underline {\Omega}^*( X  \xrightarrow{\pi_X} S) \to \underline {\Omega}^* ( Y  \xrightarrow{\pi_Y} S) $$
is defined by
$\underline{\underline{f_*}}(\left [[[V  \xrightarrow{h} Y; L_1, \cdots, L_k]] \right ]):=  \left [\underline{f_*}([[V  \xrightarrow{f \circ h} X; L_1, \cdots, L_k]]) \right]$.

\item (pullback) For a smooth morphism $f: X \to Y$ from $\pi_X:X \to S$ to $\pi_Y:Y \to S$, 
the pullback
$$\underline{\underline{f^*}}: \underline {\Omega}^i( Y  \xrightarrow{\pi_Y} S) \to \underline {\Omega}^{i-\op{dim}f} ( X  \xrightarrow{\pi_X} S) $$
is defined by
$\underline{\underline{f^*}}(\left [[[W  \xrightarrow{k} Y; L_1, \cdots, L_r]] \right ]):=  \left [\underline {f^*}([[W  \xrightarrow{k} Y; L_1, \cdots, L_r]])\right]$.

\item (orientation = the Chern operator $\underline{\underline {\widetilde c_1 (L)}}$) For a line bundle $L$ over $X$, the operator
$$\underline{\underline {\widetilde c_1 (L)}}:\underline {\Omega}^i( X  \xrightarrow{\pi_X}  S) \to \underline {\Omega}^{i+1}( X  \xrightarrow{\pi_X}  S) $$
is defined by 
$\underline {\underline {\widetilde c_1 (L)}} (\left [[[V  \xrightarrow{h} X; L_1, \cdots, L_k]] \right ):= \left [\underline{\widetilde c_1 (L)}([[V  \xrightarrow{h} X; L_1, \cdots, L_k]]) \right]$.
\end{itemize}
\begin{enumerate}
\item The above operations are well-defined.
\item The above theory $\underline {\Omega}^*(X \xrightarrow {\pi_X} S)$ is an oriented Borel--More functor with products, i.e. it satisfies all the properties (D1), $\cdots$, (D4) and (A1), $\cdots$, (A8).
\item The above theory $\underline {\Omega}^*(X \xrightarrow {\pi_X} S)$ satisfies (rel-Dim) and (rel-Sec).
. \\
\end{enumerate}
\end{thm}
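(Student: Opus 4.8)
The structure of the argument parallels the proof of Theorem \ref{thm1} exactly, but with $\langle \Cal R^{Dim} \rangle$ replaced by (the image of) $\langle \Cal R^{Sect} \rangle$ inside the already-formed quotient $\underline{\Cal Z}^*(X \xrightarrow{\pi_X} S)$. The plan is as follows. For part (1), each of the four operations on $\underline{\Cal Z}^*$ has already been shown (in Theorem \ref{thm1}) to be well-defined; so it suffices to check that each of $\underline{f_*}$, $\underline{f^*}$, $\underline{\widetilde c_1(L)}$, and $\times_S$ carries the subgroup $\langle \Cal R^{Sect} \rangle$ into the corresponding $\langle \Cal R^{Sect} \rangle$. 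A generator of $\langle \Cal R^{Sect}\rangle(X \xrightarrow{\pi_X} S)$ has the form $[[V \xrightarrow h X; L_1,\dots,L_r]] - [[Z \xrightarrow{h|_Z} X; i^*L_1,\dots,i^*L_{r-1}]]$ where $Z = s^{-1}(0)$ for a section $s$ of $L_r$ transverse fiberwise to the zero section. One checks:
\begin{itemize}
\item \emph{Pushforward}: for proper $f:X \to Y$, applying $\underline{f_*}$ to such a generator gives $[[V \xrightarrow{f\circ h} Y; L_1,\dots,L_r]] - [[Z \xrightarrow{f\circ h|_Z} Y; i^*L_1,\dots,i^*L_{r-1}]]$; the same section $s$ of $L_r$ over $V$ works, and fiberwise transversality with respect to $\pi_X \circ h = \pi_Y \circ (f\circ h)$ is unchanged, so this is again in $\langle \Cal R^{Sect}\rangle(Y \xrightarrow{\pi_Y} S)$.
\item \emph{Pullback}: for smooth $f:X \to Y$, form the fiber square with $f':W' \to W$ the pullback of $f$; then $(f')^*$ of the section $s$ is a section of $(f')^*L_r$, and since $f'$ is smooth (being a base change of the smooth $f$), fiberwise transversality is preserved and $(f')^{-1}(Z) = (Z \times_Y W')$ maps smoothly to $S$. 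Hence $\underline{f^*}$ of the generator is again a generator of $\langle \Cal R^{Sect}\rangle(X \xrightarrow{\pi_X} S)$.
\item \emph{Orientation}: $\underline{\widetilde c_1(N)}$ simply appends $h^*N$ to the bundle list on both terms of the generator, and the section data for $L_r$ is untouched; so the result is the generator built from $[[V \xrightarrow h X; L_1,\dots,L_r,h^*N]]$, still in $\langle \Cal R^{Sect}\rangle$.
\item \emph{External product}: argue as in Theorem \ref{thm1}(1)(d) — it suffices to show $\langle \Cal R^{Sect}\rangle(X) \times_S \underline{\Cal Z}^*(Y) \subset \langle \Cal R^{Sect}\rangle(X\times_S Y)$ and symmetrically. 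Pulling the section $s$ of $L_r$ back along the projection $\widetilde p_1 \widetilde k : V\times_S W \to V$ (which is smooth, being a base change), one gets a fiberwise-transverse section of the pullback bundle whose zero scheme is $Z \times_S W$, and the product $\times_S$ is compatible with the inclusion; the cross terms with the $N_j$'s on $W$ ride along unchanged.
\end{itemize}

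For part (2), once the four operations descend to $\underline{\Omega}^*$, the axioms (D1)--(D4) and (A1)--(A8) are inherited formally from $\underline{\Cal Z}^*$: each identity among the operations is a statement about representatives that already holds in $\underline{\Cal Z}^*$ (Theorem \ref{thm1}(2)), and taking classes in the further quotient preserves it. Concretely, for each axiom one writes the same short diagram-chase as in the proof of Theorem \ref{thm1}(2), with an extra layer of brackets. The unit is $\bigl[[[S \xrightarrow{\op{id}_S} S]]\bigr] \in \underline{\Omega}^0(S \xrightarrow{\op{id}_S} S)$, and $\pi_X^*1_S$ is the class of $[[X \xrightarrow{\op{id}_X} X]]$ when $\pi_X$ is smooth.

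For part (3): (rel-Dim) is inherited because $\underline{\Cal Z}^*$ already satisfies it (Theorem \ref{thm1}(3)) and $\underline{\Omega}^*$ is a quotient of it, so the class $\bigl[[[X \xrightarrow{\op{id}_X} X; L_1,\dots,L_n]]\bigr]$ vanishes already at the $\underline{\Cal Z}^*$ stage. For (rel-Sect), let $\pi_X:X\to S$ be smooth, $L$ a line bundle over $X$, and $s$ a section transverse fiberwise to the zero section with $Z = s^{-1}(0)$ and $\pi_Z = \pi_X|_Z$ smooth. Then $\widetilde c_1(L)(\pi_X^*1_S)$ is represented by $[X \xrightarrow{\op{id}_X} X; L]$ and ${i_Z}_*(\pi_Z^*1_S)$ by $[Z \xrightarrow{i_Z} X]$; their difference is precisely a generator of $\langle \Cal R^{Sect}\rangle(X \xrightarrow{\pi_X} S)$ (take $r=1$ in Definition \ref{rel-Sect}, with $V = X$, $h = \op{id}_X$), hence maps to $0$ in $\underline{\Omega}^*(X \xrightarrow{\pi_X} S)$.

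The main obstacle, and the only step requiring genuine care rather than bookkeeping, is the pullback case of part (1): one must confirm that fiberwise transversality of a section (with respect to $\pi_X \circ h$) is stable under an arbitrary smooth base change $f$ and that the zero scheme of the pulled-back section is exactly the base change of $Z$ — i.e. that forming $s^{-1}(0)$ commutes with the pullback and that $\pi_X \circ h|_{f'^{-1}(Z)}$ remains smooth. This is where the hypothesis that $f$ is smooth (not merely that $f'$ exists) is essential, just as the author flags in the analogous step for $\langle \Cal R^{Dim}\rangle$; everything else is routine transcription of the $\underline{\Cal Z}^*$ arguments with one more pair of brackets.
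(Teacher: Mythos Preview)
Your proposal is correct and follows essentially the same approach as the paper's own proof: both argue by showing that each of the four operations on $\underline{\Cal Z}^*$ carries generators of $\langle \Cal R^{Sect}\rangle$ to $\langle \Cal R^{Sect}\rangle$, then observe that the axioms (D1)--(D4), (A1)--(A8) descend formally, and finally verify (rel-Sect) by noting that $[X \xrightarrow{\op{id}_X} X; L] - [Z \xrightarrow{i_Z} X]$ is literally a generator of $\langle \Cal R^{Sect}\rangle$. Your emphasis on the preservation of fiberwise transversality under smooth base change in the pullback step is, if anything, more explicit than the paper, which simply records that $Z'$ is the zero locus of the pulled-back section; the only minor slip is the notation ``$(f')^{-1}(Z) = (Z \times_Y W')$'' in your pullback bullet, where you mean $Z' = W' \times_W Z$ (equivalently $Z \times_Y X$).
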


\begin{proof} 
The proof is similar to that of the above Theorem \ref{thm1}. But we will write them down for the sake of completeness.

\noindent
(1) To show that the pushforward, the pullback and the orientation are well-defined, it suffices to show that each operation preserves $\langle \Cal R^{Sect} \rangle$, to be more precise, 

(i)  $\underline{f_*}(\langle \Cal R^{Sect} \rangle(X \xrightarrow {\pi_X} S)) \subset \langle \Cal R^{Sect} \rangle(Y \xrightarrow {\pi_Y} S))$: Indeed, let

$[[V \xrightarrow h X; L_1, \cdots, L_r]] - [[Z \xrightarrow {h|_Z} X; i^*L_1, \cdots, i^*L_{r-1}]] \in \langle \Cal R^{Sect} \rangle(X \xrightarrow {\pi_X} S)$,
where $Z$, the line bundles $L_i$'s and $i:Z \to V$ are as in Definition \ref{rel-Sect} above.

Then
\begin{align*}
\underline{f_*}([[V \xrightarrow h X; & L_1, \cdots, L_r]]  - [[Z \xrightarrow {h|_Z} X; i^*L_1, \cdots, i^*L_{r-1}]]) \\
& = [ f_*([V \xrightarrow h X; L_1, \cdots, L_r])] - [f_*([Z \xrightarrow {h|_Z} X; i^*L_1, \cdots, i^*L_{r-1}])] \\
& = [[V \xrightarrow {f \circ h_X} Y; L_1, \cdots, L_r]] - [[Z \xrightarrow {f \circ h|_Z} Y; i^*L_1, \cdots, i^*L_{r-1}]],
\end{align*}
which belongs to $\langle \Cal R^{Sect} \rangle(Y \xrightarrow {\pi_Y} S)$.

(ii) $\underline{f^*}(\langle \Cal R^{Sect} \rangle(Y \xrightarrow {\pi_Y} S)) \subset \langle \Cal R^{Sect} \rangle(X \xrightarrow {\pi_X} S)$: Let 

$[[W \xrightarrow k Y; M_1, \cdots, M_r]] - [[Z \xrightarrow {k|_Z} Y; i^*M_1, \cdots, i^*M_{r-1}]] \in \langle \Cal R^{Sect} \rangle(Y \xrightarrow {\pi_Y} S)$.
Then we consider the following fiber squares:
$$\CD
Z'@> {f''}>> Z \\
@V {i'} VV @VV i V\\
W'@> {f'}>> W \\
@V {k'} VV @VV k V\\
X@>> f > Y, \endCD
$$ 
Here $i:Z \to W$ is the inclusion and thus the pullback $i':Z' \to W$ is also an inclusion and $k|_Z:Z \to Y$ is the composite $k\circ i$ and $k|_{Z'}:Z' \to Y$ is the composite $k' \circ i'$. Then we have
$$(f'')^*(i^*M_j) = (i')^*((f')^*M_j).$$
Hence we have 
\begin{align*}
& \underline{f^*}([[W \xrightarrow k Y; M_1, \cdots, M_r]] - [[Z \xrightarrow {k|_Z} Y; i^*M_1, \cdots, i^*M_{r-1}]]) \\
& = [ f^*([W \xrightarrow k Y; M_1, \cdots, M_r])] - [f^*([Z \xrightarrow {k \circ i} Y; i^*M_1, \cdots, i^*M_{r-1}])] \\
& = [[W' \xrightarrow {k'} X; (f')^*M_1, \cdots, (f')^*M_r]] - [[Z' \xrightarrow {k' \circ i'} X; (f'')^*(i^*M_1), \cdots, (f'')^*(i^*M_{r-1})]]\\
& = [[W' \xrightarrow {k'} X; (f')^*M_1, \cdots, (f')^*M_r]] - [[Z' \xrightarrow {k' \circ i'} X; (i')^*((f')^*M_1), \cdots, (i')^*((f')^*M_{r-1})]]\\
& = [[W' \xrightarrow {k'} X; (f')^*M_1, \cdots, (f')^*M_r]] - [[Z' \xrightarrow {k'|_{Z'}} X; (i')^*((f')^*M_1), \cdots, (i')^*((f')^*M_{r-1})]].
\end{align*}
Note that $Z'$ is the zero locus of the section $s': W' \to (f')^*M_r$ which is the pullback of the section $s:W \to M_r$. Hence 

$[[W' \xrightarrow {k'} X; (f')^*M_1, \cdots, (f')^*M_r]] - [[Z' \xrightarrow {k'|_{Z'}} X; (i')^*((f')^*M_1), \cdots, (i')^*((f')^*M_{r-1})]]$ 

belongs to
$\langle \Cal R^{Sect} \rangle(X \xrightarrow {\pi_X} S)$.

\noindent
(iii) $\underline{\widetilde c_1 (L)}(\langle \Cal R^{Sect} \rangle(X \xrightarrow {\pi_X} S)) \subset \langle \Cal R^{Sect} \rangle(X \xrightarrow {\pi_X} S)$: Let $L$ be a line bundle over $X$ and let 

$[[V \xrightarrow h X; L_1, \cdots, L_r]] - [[Z \xrightarrow {h|_Z} X; i^*L_1, \cdots, i^*L_{r-1}]] \in \langle \Cal R^{Sect} \rangle(X \xrightarrow {\pi_X} S)$. Then 
\begin{align*}
& \underline{\widetilde c_1 (L)}([[V \xrightarrow h X; L_1, \cdots, L_r]] - [[Z \xrightarrow {h|_Z} X; i^*L_1, \cdots, i^*L_{r-1}]]) \\
& =[\widetilde c_1 (L)([V \xrightarrow h X; L_1, \cdots, L_r])] - [\widetilde c_1 (L)([Z \xrightarrow {h|_Z} X; i^*L_1, \cdots, i^*L_{r-1}])]) \\
& = [[V \xrightarrow h X; L_1, \cdots, L_r, h^*L]] - [[Z \xrightarrow {h|_Z} X; i^*L_1, \cdots, i^*L_{r-1}, (h|_Z)^*L]]\\
& =  [[V \xrightarrow h X; L_1, \cdots, L_r, h^*L]] - [[Z \xrightarrow {h|_Z} X; i^*L_1, \cdots, i^*L_{r-1}, i^*(h^*L)]], \quad \text{(since $h|_Z = h \circ i$)}
\end{align*}
which belongs to $\langle \Cal R^{Sect} \rangle(X \xrightarrow {\pi_X} S)$.

\noindent
(iv) As to the external product, we need to see that not only 
$$\langle \Cal R^{Sect} \rangle(X \xrightarrow {\pi_X} S) \times_S \langle \Cal R^{Sect} \rangle(Y \xrightarrow {\pi_Y} S) \subset \langle \Cal R^{Sect} \rangle(X \times_S Y \xrightarrow {\pi_X \times_S \pi_Y} S)$$
but also 
$$\underline{\Cal Z^*}(X \xrightarrow {\pi_X} S) \times_S \langle \Cal R^{Sect} \rangle(Y \xrightarrow {\pi_Y} S) \subset \langle \Cal R^{Sect} \rangle(X \times_S Y \xrightarrow {\pi_X \times_S \pi_Y} S), $$
$$\langle \Cal R^{Sect} \rangle(X \xrightarrow {\pi_X} S) \times_S \underline{\Cal Z^*}(Y \xrightarrow {\pi_Y} S) \subset \langle \Cal R^{Sect} \rangle(X \times_S Y \xrightarrow {\pi_X \times_S \pi_Y} S).$$
For this, it suffices to show the second one, because the other two are similar. So, we consider 
\begin{align*}
& [[V \xrightarrow h X; L_1, \cdots, L_r]] \times_S 
\left ( [[W \xrightarrow k Y; M_1, \cdots, M_q]] - [[Z \xrightarrow {k|_Z} Y; i^*M_1, \cdots, i^*M_{q-1}]] \right ) \\
& = [[V \xrightarrow h X;  L_1, \cdots, L_r]] \times_S [[W \xrightarrow k Y; M_1, \cdots, M_q]] \\
& \hspace {3cm} - [[V \xrightarrow h X;  L_1, \cdots, L_r]] \times_S [[Z \xrightarrow {k|_Z} Y; i^*M_1, \cdots, i^*M_{q-1}]]
\end{align*}
We note that $i:Z \to W$ is the inclusion and $k|_Z$ is the composite $k \circ i$, and we recall that 
\begin{align*}
& [[V  \xrightarrow{h} X; L_1, \cdots, L_r]]\times_S [[W  \xrightarrow{k} X; M_1, \cdots, M_q]]\\
& = [ [V  \xrightarrow{h} X; L_1, \cdots, L_r] \times_S [W  \xrightarrow{k} X; M_1, \cdots, M_q]] \\
& =[[ V \times_S W \xrightarrow {k' \circ \widetilde h} X \times_SY; (\widetilde {p_1}\widetilde k )^*L_1, \cdots, (\widetilde {p_1}\widetilde k)^*L_r, (\widetilde {p_2}\widetilde h )^*M_1, \cdots, (\widetilde {p_2}\widetilde h)^*M_q]].\\
& [[V \xrightarrow h X;  L_1, \cdots, L_r]] \times_S [[Z \xrightarrow {k|_Z} Y; i^*M_1, \cdots, i^*M_{q-1}]]\\
& = [[V \xrightarrow h X;  L_1, \cdots, L_r] \times_S [Z \xrightarrow {k|_Z} Y; i^*M_1, \cdots, i^*M_{q-1}]]\\
& =[[V \times_S Z \xrightarrow {\widetilde i \circ (k' \circ \widetilde h) } X \times_SY; \widetilde i^*(\widetilde {p_1}\widetilde k )^*L_1, \cdots, \widetilde i^*(\widetilde {p_1}\widetilde k)^*L_r, (\widetilde {\widetilde{p_2}} \widetilde {\widetilde h})^*i^*M_1, \cdots, (\widetilde {\widetilde{p_2}} \widetilde {\widetilde h})^*i^*M_{q-1}]]\\
& =[[V \times_S Z \xrightarrow {\widetilde i \circ (k' \circ \widetilde h) } X \times_SY; \widetilde i^*(\widetilde {p_1}\widetilde k )^*L_1, \cdots, \widetilde i^*(\widetilde {p_1}\widetilde k)^*L_r, \widetilde i^*(\widetilde {p_2}\widetilde h )^*M_1, \cdots, \widetilde i^*(\widetilde {p_2}\widetilde h)^*M_{q-1}]],
\end{align*}
where $V \times_S Z $ is the zero locus of the section from $V \times _S W$ to the pullbacked line bundle $(\widetilde {p_2}\widetilde h)^*M_q$. So the last one belongs to $\langle \Cal R^{Sect} \rangle(X \times_S Y \xrightarrow {\pi_X \times_S \pi_Y} S)$. Here we use the diagram:
$$\CD
V \times _S Z @> {\widetilde {\widetilde h}}>> \widetilde Z @> {\widetilde {\widetilde{p_2}}}>> Z\\
@V {\widetilde i} VV @VV {i'} V@VV i V\\
V \times _S W @> {\widetilde h}>> \widetilde W @> {\widetilde{p_2}}>> W\\
@V {\widetilde k} VV @VV {k'} V@VV k V\\
\widetilde V@>> {h'} > X \times_BY@>> {p_2}  > Y\\
@V {\widetilde{p_1}} VV @VV {p_1}  V@VV {\pi_Y} V\\
V@>> h > X@>> {\pi_X} > S.  \endCD
$$

\noindent
(2) (D1), $\cdots$, (D4)  are already checked above, thus it suffices to see (A1), $\cdots$, (A8). But they follow from the definitions of these four operations.

\noindent
(3)  $\widetilde c_1(L)(\pi_X^*1_S) = (i_Z)_*(\pi_Z^*1_S)$ is nothing but $[X \xrightarrow {\op{id}_X} X; L] = [Z \xrightarrow {i} X]$. Since we have
$[[X \xrightarrow {\op{id}_X} X; L]] - [[Z \xrightarrow {i} X]] \in \langle \Cal R^{Sect} \rangle(X \xrightarrow {\pi_X} S)$, we have $\underline{\underline {\widetilde c_1(L)}}(\underline{\underline {\pi_X^*}}1_S) = \underline{\underline{{i_Z}_*}}(\underline{\underline {\pi_X^*}}1_S).$

\end{proof}

Finally we define the following
\begin{defn} We define the following subgroup of $\bL_* \otimes \underline {\Omega}^*(X \xrightarrow {\pi_X} S )$:
$$\langle \bL_*\Cal R^{FGL} \rangle(X \xrightarrow {\pi_X} S )$$
is generated by the elements of the form
$$\left [[[V \xrightarrow h X; L_1, \cdots L_r, F_{\bL}(L, M)]] \right] - \left [[[V \xrightarrow h X; L_1, \cdots L_r, L \otimes M]] \right] $$
where $L_i$, $L$ and $M$ are all line bundles over $V$.
\end{defn} 

\begin{defn} We define the following quotient group 
$$\Omega^*(X \xrightarrow {\pi_X} S ):= \frac{\bL_* \otimes \underline {\Omega}^*(X \xrightarrow {\pi_X} S )}{\langle \bL_*\Cal R^{FGL} \rangle(X \xrightarrow {\pi_X} S )}.$$
The equivalence class of $\left [[[V \xrightarrow h X; L_1, L_2, \cdots, L_k]] \right] \in \bL_* \otimes \underline {\Omega}^*(X \xrightarrow {\pi_X} S )$ in the quotient group $\Omega^*(X \xrightarrow {\pi_X} S )$ shall be denoted by
$$\left [\left [[[V \xrightarrow h X; L_1, L_2, \cdots, L_k]] \right] \right].$$
\end{defn}

\begin{thm}
For the above group $\Omega^*(X \xrightarrow {\pi_X} S )$, we define the following four operations as follows:
\begin{itemize}
\item (external product) The external product 
$$\times_S: \Omega^*(X \xrightarrow {\pi_X} S ) \times \Omega^*(Y \xrightarrow {\pi_Y} S ) \to \Omega ^*(X \times_S Y \xrightarrow {\pi_X \times_S \pi_Y} S)$$
is defined by

\begin{align*}
 \left [\left [ [[V  \xrightarrow{h} X; L_1, \cdots, L_s ]] \right ] \right] & \times_S \left [\left [ [[W  \xrightarrow{k} X; M_1, \cdots, M_t]] \right ] \right] \\
 &:= \Biggl [ \left [ [[V  \xrightarrow{h} X; L_1, \cdots, L_s]] \right ] \times_S \left [ [[W  \xrightarrow{k} X; M_1, \cdots, M_t]] \right] \Biggr ].
 \end{align*}
\item (pushforward) For a proper morphism $f: X \to Y$ from $\pi_X:X \to S$ to $\pi_Y:Y \to S$, 
the pushforward
$$f_*: \Omega^*( X  \xrightarrow{\pi_X} S) \to \Omega^* ( Y  \xrightarrow{\pi_Y} S) $$
is defined by
$f_* (\left [\left [[[V  \xrightarrow{h} Y; L_1, \cdots, L_k]] \right ] \right] ):=  \left [\underline {\underline{f_*}}(\left [ [[V  \xrightarrow{f \circ h} X; L_1, \cdots, L_k]] \right] ) \right]$.

\item (pullback) For a smooth morphism $f: X \to Y$ from $\pi_X:X \to S$ to $\pi_Y:Y \to S$, 
the pullback
$$f^*: \Omega^i( Y  \xrightarrow{\pi_Y} S) \to \Omega^{i-\op{dim}f} ( X  \xrightarrow{\pi_X} S) $$
is defined by

$f^*(\left [\left [[[W  \xrightarrow{k} Y; L_1, \cdots, L_r]] \right ] \right]):=  \left [\underline{\underline {f^*}}(\left [ [[W'  \xrightarrow{k'} X; (f')^*L_1, \cdots, (f')^*L_r]] \right ]) \right]$,
where we use the following fiber square
$$\CD
W'@> {f'}>> W \\
@V {k'} VV @VV k V\\
X@>> f > Y, \endCD
$$ 
\item (orientation = the Chern operator $\widetilde c_1 (L)$) For a line bundle $L$ over $X$, the operator
$$\widetilde c_1 (L):\Omega^i( X  \xrightarrow{\pi_X}  S) \to \Omega^{i+1}( X  \xrightarrow{\pi_X}  S) $$
is defined by 

$\widetilde c_1 (L) (\left [\left [[[V  \xrightarrow{h} X; L_1, \cdots, L_k]] \right] \right]):= \left [\underline{\underline{\widetilde c_1 (L)}}(\left [ [[V  \xrightarrow{h} X; L_1, \cdots, L_k, h^*L]] \right]) \right]$.
\end{itemize}
\begin{enumerate}
\item The above operations are well-defined.
\item The above theory $\Omega^*(X \xrightarrow {\pi_X} S)$ is an oriented Borel--More functor with products, i.e. it satisfies all the properties (D1), $\cdots$, (D4) and (A1), $\cdots$, (A8).
\item The above theory $\Omega^*(X \xrightarrow {\pi_X} S)$ satisfies (rel-Dim), (rel-Sec) and (rel-FGL).
. \\
\end{enumerate}
\end{thm}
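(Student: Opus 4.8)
The plan is to imitate, step for step, the proofs of Theorems \ref{thm1} and \ref{thm2}. Since $\Omega^*(X\xrightarrow{\pi_X}S)$ is the quotient of $\bL_*\otimes\underline{\Omega}^*(X\xrightarrow{\pi_X}S)$ by $\langle\bL_*\Cal R^{FGL}\rangle(X\xrightarrow{\pi_X}S)$, and each of the four operations on $\Omega^*$ is by definition the corresponding operation on $\bL_*\otimes\underline{\Omega}^*$ (which exists by Theorem \ref{thm2}) passed to the quotient, the content of part (1) is just that each operation maps $\langle\bL_*\Cal R^{FGL}\rangle$ into $\langle\bL_*\Cal R^{FGL}\rangle$ of the relevant $S$-scheme. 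I would check the three ``unary'' operations first. For a proper morphism $f:X\to Y$ over $S$, $\underline{\underline{f_*}}$ only changes $h$ to $f\circ h$ and leaves all line bundles on $V$ fixed, so it carries a generator $[V\xrightarrow h X;L_1,\dots,L_r,F_{\bL}(L,M)]-[V\xrightarrow h X;L_1,\dots,L_r,L\otimes M]$ of $\langle\bL_*\Cal R^{FGL}\rangle(X\xrightarrow{\pi_X}S)$ to the corresponding generator over $Y$. For a smooth morphism $f:X\to Y$ over $S$, $\underline{\underline{f^*}}$ replaces each line bundle by its pullback along the smooth base change $f'$; since pullback of line bundles is a homomorphism of Picard groups we have $(f')^*F_{\bL}(L,M)=F_{\bL}((f')^*L,(f')^*M)$ and $(f')^*(L\otimes M)=(f')^*L\otimes(f')^*M$, so again generators go to generators. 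For the Chern operator $\underline{\underline{\widetilde c_1(N)}}$, appending $h^*N$ commutes with the FGL relation (which concerns only the last two entries) because by (A5) Chern operators commute, so the position of the added line bundle is immaterial; hence $\langle\bL_*\Cal R^{FGL}\rangle$ is preserved.

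For the external product the argument is the one used in the proofs of Theorems \ref{thm1}(1) and \ref{thm2}(1): one must show both $\bL_*\otimes\underline{\Omega}^*(X\xrightarrow{\pi_X}S)\times_S\langle\bL_*\Cal R^{FGL}\rangle(Y\xrightarrow{\pi_Y}S)\subset\langle\bL_*\Cal R^{FGL}\rangle(X\times_SY\xrightarrow{\pi_X\times_S\pi_Y}S)$ and the symmetric inclusion. Using the iterated fiber square displayed just before Theorem \ref{pro1}, the external product of a cycle on $V$ with a generator of $\langle\bL_*\Cal R^{FGL}\rangle$ involving $F_{\bL}(L,M)$ (with $L,M$ on $W$) is a difference of two cobordism cycles over $X\times_SY$ all of whose line bundles are pulled back along the projections composed with $\widetilde h,\widetilde k$; since those pullbacks commute with $\otimes$, one gets $(\widetilde{p_2}\widetilde h)^*F_{\bL}(L,M)=F_{\bL}((\widetilde{p_2}\widetilde h)^*L,(\widetilde{p_2}\widetilde h)^*M)$ and similarly for $L\otimes M$, so the difference is again a generator of $\langle\bL_*\Cal R^{FGL}\rangle(X\times_SY\xrightarrow{\pi_X\times_S\pi_Y}S)$. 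This proves (1).

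Part (2) then follows formally. (D1)--(D4) are immediate from the definitions of the four operations. (A1)--(A8) are proved by the same ``chase through the successive quotients'' as for (A1) in the proof of Theorem \ref{thm1}(2): each of these identities already holds in $\Cal Z^*$, hence in $\bL_*\otimes\underline{\Omega}^*$, and therefore in the quotient $\Omega^*$. For part (3): the relative dimension and section axioms are inherited, since $\Omega^*(X\xrightarrow{\pi_X}S)$ is a quotient of $\bL_*\otimes\underline{\Omega}^*(X\xrightarrow{\pi_X}S)$ and, by Theorem \ref{thm2}(3), the classes $\widetilde c_1(L_1)\circ\cdots\circ\widetilde c_1(L_n)(\pi_X^*1_S)$ (for $n>\op{dim}\pi_X$) and $\widetilde c_1(L)(\pi_X^*1_S)-(i_Z)_*(\pi_Z^*1_S)$ already vanish in $\underline{\Omega}^*$. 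Finally (rel-FGL) is exactly the relation we have divided out: taking $r=0$ in the definition of $\langle\bL_*\Cal R^{FGL}\rangle$ and identifying $\pi_X^*1_S=[X\xrightarrow{\op{id}_X}X]$ for $\pi_X$ smooth and $F_A=\Phi(F_{\bL})$ for the tautological $\bL_*$-structure, the generator $[X\xrightarrow{\op{id}_X}X;F_{\bL}(L,M)]-[X\xrightarrow{\op{id}_X}X;L\otimes M]$ is precisely $F_A(\widetilde c_1(L),\widetilde c_1(M))(\pi_X^*1_S)-\widetilde c_1(L\otimes M)(\pi_X^*1_S)$, which is therefore $0$ in $\Omega^*(X\xrightarrow{\pi_X}S)$.

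The only genuinely delicate step is the well-definedness of the external product: one has to track, through the nested fiber-product diagram, that every line bundle appearing in $\alp\times_S\be$ is a pullback of a line bundle appearing in $\alp$ or in $\be$, and that each pullback in play respects tensor products; this is what makes $\langle\bL_*\Cal R^{FGL}\rangle$ behave like an ideal with respect to $\times_S$. Everything else is bookkeeping formally identical to the earlier theorems. (As the author notes in the next section, this construction yields an oriented Borel--Moore functor with products of geometric type but not a bivariant theory, because the bivariant product fails to descend.)
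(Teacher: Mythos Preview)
Your proposal is correct and follows essentially the same approach as the paper's own proof, which is very terse: the paper simply observes that pushforward, pullback and the Chern operator are well-defined ``as above'', and that for the external product ``the tensor $\otimes$ and the formal group law commute with the pullback operation'', then declares (A1)--(A8) and the three axioms clear. You have unpacked exactly these assertions in the natural way, so there is nothing to add.
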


\begin{proof} It is easy to see that as above the pushforward, the pullback and the orientation are all well-defined. As to the external product, we basically deal with pulling back line bundles, and the tensor $\otimes$ and the formal group law commute with the pullback operation, therefore we can see that the external product is also well-defined. It is also clear that it satisfies (A1), $\cdots$, (A8) and (rel-Dim), (rel-Sec) and (rel-FGL).
\end{proof}
\begin{rem}
If the target scheme $Y$ is a point, then the above $\langle \bL_*  \Cal R^{FGL} \rangle(X \xrightarrow f Y)$ is equal to the subgroup $\langle \bL_*  \Cal R^{FGL} \rangle(X)$ defined in \cite[Remark 2.4.11]{LM}. Hence we have
$$\langle \bL_*  \Cal R^{FGL} \rangle(X \xrightarrow {p_X} pt) =\langle \bL_*  \Cal R^{FGL} \rangle(X), \quad {\Omega}^{-i}(X \xrightarrow {p_X} pt) = \Omega_i(X).$$ 
\end{rem}

Therefore we get the following theorem
\begin{thm} The above theory $\Omega^*(X \xrightarrow {\pi_X} S)$ is an oriented Borel--Moore functor with products of geometric type on $Sch/S$ such that if $S = \op{Spec} (k) =pt$, then $\Omega^{-*}(X \to pt)$ is equal to Levine--Morel's algebraic cobordism $\Omega_*(X)$.
\end{thm}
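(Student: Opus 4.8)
The theorem packages two claims: that $\Omega^*(-\xrightarrow{\pi_{-}}S)$ is an oriented Borel--Moore functor with products \emph{of geometric type} on $Sch/S$, and that for $S=\op{Spec}(k)=pt$ it reproduces Levine--Morel's $\Omega_*$. For the first claim I would note that the theorem immediately preceding already delivers almost everything: the four operations $\times_S$, $f_*$, $f^*$, $\widetilde c_1(L)$, the properties (D1)--(D4) and (A1)--(A8), and the three relative axioms (rel-Dim), (rel-Sec), (rel-FGL). Being ``of geometric type'' requires in addition only that $\Omega^*(-\xrightarrow{\pi_{-}}S)$ be an $\bL_*$-theory, i.e.\ that there be a graded ring homomorphism $\Phi\colon\bL_*\to\Omega^*(S\xrightarrow{\op{id}_S}S)$, the target being a ring under $\times_S$ with unit $1_S:=[S\xrightarrow{\op{id}_S}S]$ by (D4). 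So the one genuinely new item to produce is $\Phi$.

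The plan for $\Phi$ is to set $\Phi(a)$ equal to the class of $a\otimes[S\xrightarrow{\op{id}_S}S]$ in $\Omega^*(S\xrightarrow{\op{id}_S}S)=\bigl(\bL_*\otimes\underline{\Omega}^*(S\xrightarrow{\op{id}_S}S)\bigr)/\langle\bL_*\Cal R^{FGL}\rangle(S\xrightarrow{\op{id}_S}S)$. Additivity is immediate, and the grading works out: $[S\xrightarrow{\op{id}_S}S]$ has cohomological degree $0$ and tensoring by an element of $\bL_n$ shifts the degree to $-n$, so $\Phi$ carries $\bL_n$ into $\Omega^{-n}(S\xrightarrow{\op{id}_S}S)$. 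For multiplicativity I would unwind the definition of $\times_S$ on $\bL_*\otimes\underline{\Omega}^*$, which is $\bL_*$-bilinear over the $\underline{\Omega}^*$-level external product, obtaining $\Phi(a)\times_S\Phi(b)=(ab)\otimes\bigl([S\xrightarrow{\op{id}_S}S]\times_S[S\xrightarrow{\op{id}_S}S]\bigr)$, and then use that $[S\xrightarrow{\op{id}_S}S]$ is the unit for $\times_S$, so that $[S\xrightarrow{\op{id}_S}S]\times_S[S\xrightarrow{\op{id}_S}S]=[S\xrightarrow{\op{id}_S}S]$ and hence $\Phi(a)\times_S\Phi(b)=\Phi(ab)$. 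With $\Phi$ in hand, the three relative axioms already verified upgrade $\Omega^*(-\xrightarrow{\pi_{-}}S)$ to an oriented Borel--Moore functor with products of geometric type.

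For the second claim I would take $S=pt$ and trace the construction one quotient at a time, at each step invoking the remark that identifies the object with its Levine--Morel counterpart: $\Cal Z^{-i}(X\xrightarrow{p_X}pt)=\Cal Z_i(X)$; $\langle\Cal R^{Dim}\rangle(X\xrightarrow{p_X}pt)=\langle\Cal R^{Dim}\rangle(X)$, hence $\underline{\Cal Z}^{-i}(X\xrightarrow{p_X}pt)=\underline{\Cal Z}_i(X)$; $\langle\Cal R^{Sect}\rangle(X\xrightarrow{p_X}pt)=\langle\Cal R^{Sect}\rangle(X)$, hence $\underline{\Omega}^{-i}(X\xrightarrow{p_X}pt)=\underline{\Omega}_i(X)$; and $\langle\bL_*\Cal R^{FGL}\rangle(X\xrightarrow{p_X}pt)=\langle\bL_*\Cal R^{FGL}\rangle(X)$, hence $\Omega^{-i}(X\xrightarrow{p_X}pt)=\Omega_i(X)$. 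Then I would check that under these identifications the four operations and the map $\Phi$ are given by exactly the formulas of \cite{LM}: over $pt$ the fibre products $\times_{pt}$ are ordinary products, ``smooth (resp.\ proper) over $S$'' becomes ``smooth (resp.\ proper) scheme'', and (rel-Dim), (rel-Sec), (rel-FGL) read verbatim as (Dim), (Sect), (FGL). Hence $\Omega^{-*}(-\to pt)$ and $\Omega_*(-)$ coincide as oriented Borel--Moore functors with products of geometric type. An alternative finish is to combine the universality of $\Omega_*$ from \cite{LM} with the universality of $\Cal Z_*$ (Theorem \ref{universality} at $S=pt$): the resulting comparison natural transformation is already an isomorphism on pre-cobordism and carries each of the three defining subgroups isomorphically onto the corresponding one, so it descends to an isomorphism on $\Omega$.

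I do not anticipate a deep obstruction, as the statement is largely a consolidation of the preceding sections. The one point deserving care is the verification that $\Phi$ really is a \emph{graded} ring homomorphism --- in particular the degree count above and the compatibility of $\times_S$ on the Lazard-tensored level with the unit $1_S$; beyond that, the argument is bookkeeping that reduces entirely to the theorems and remarks already established.
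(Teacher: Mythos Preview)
Your proposal is correct and matches the paper's treatment: the paper states this theorem with the words ``Therefore we get the following theorem'' and gives no separate proof, treating it as an immediate consequence of the preceding theorem (well-definedness of the four operations, (D1)--(D4), (A1)--(A8), and the three relative axioms) together with the chain of remarks identifying each intermediate quotient with its Levine--Morel counterpart when $S=pt$. Your write-up is in fact more explicit than the paper, in that you spell out the $\bL_*$-structure map $\Phi\colon a\mapsto a\otimes 1_S$ and check it is a graded ring homomorphism; the paper leaves this implicit in the very definition $\Omega^*=(\bL_*\otimes\underline{\Omega}^*)/\langle\bL_*\Cal R^{FGL}\rangle$.
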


Since $\Omega_*(X) := \Omega^{-*}(X \to pt)$ is equal to Levine--Morel's algebraic cobordism $\Omega_*(X)$, in this paper we call the above theory $\Omega^*(X \xrightarrow {\pi_X} S)$ \emph{algebraic cobordism} on $Sch/S$.

In a similar manner to the proof of the universality of $\mathcal Z^*(X \xrightarrow {\pi_X} S)$ in Theorem \ref{universality}, we can show the following universality of $\Omega^*(X \xrightarrow {\pi_X} S)$:
\begin{cor} $\Omega^*(X \xrightarrow {\pi_X} S)$ is the universal one among the oriented Borel-Moore functor with products of geometric type $A^*(X \xrightarrow {\pi_X} S)$ for $S$-schemes.
\end{cor}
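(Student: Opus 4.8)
\emph{Plan of proof.} The plan is to run exactly the argument of Theorem~\ref{universality}, pushing the universal map through the three quotients that build $\Omega^*(-\to S)$. Given an oriented Borel--Moore functor with products of geometric type $A^*(-\to S)$, forget the geometric-type structure and apply Theorem~\ref{universality} to obtain the unique natural transformation of oriented Borel--Moore functors with products $\tau_{A^*}\colon \Cal Z^*(-\to S)\to A^*(-\to S)$ with $\tau_{A^*}([S\xrightarrow{\op{id}_S}S])=1_S$, given on generators by $\tau_{A^*}([V\xrightarrow h X;L_1,\dots,L_r])=\widetilde c_1(L_1)\circ\cdots\circ\widetilde c_1(L_r)\circ h_*\circ(\pi_X\circ h)^*(1_S)$. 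I would then show that $\tau_{A^*}$ kills $\langle\Cal R^{Dim}\rangle$, so descends to $\underline{\Cal Z}^*$; that the induced map kills $\langle\Cal R^{Sect}\rangle$, so descends to $\underline\Omega^*$; and that (after $\bL_*$-linear extension) it kills $\langle\bL_*\Cal R^{FGL}\rangle$, so descends to $\Omega^*$. The organizing observation is that every generator of each of these three relation subgroups is the proper pushforward (along the proper $S$-morphism $h$, resp.\ $\underline{h_*}$, resp.\ $\underline{\underline{f_*}}$) of a ``local'' element supported on the smooth $S$-scheme $V$ itself, obtained by replacing $h$ by $\op{id}_V$ and regarding $V$ as an $S$-scheme via $\pi_V:=\pi_X\circ h$; since $\tau_{A^*}$ (and its quotients) commute with proper pushforward, it suffices in each of the three steps to verify the vanishing on these local elements.

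\emph{Descent past $\langle\Cal R^{Dim}\rangle$.} With the notation of Definition~\ref{keydef}, the local element is $[V\xrightarrow{\op{id}_V}V;\pi^*L_1,\dots,\pi^*L_r,M_1,\dots,M_s]$, where $\pi\colon V\to S'$ and $\nu\colon S'\to S$ are smooth and $\pi_V=\nu\circ\pi$. Using $[V\xrightarrow{\op{id}_V}V]=\pi_V^*(1_S)=\pi^*(\nu^*1_S)$ (functoriality (A1) of pullback), the relation (A4) $\widetilde c_1(\pi^*L_i)\circ\pi^*=\pi^*\circ\widetilde c_1(L_i)$, and the commutativity (A5) of Chern operators, one gets
$$\tau_{A^*}\bigl([V\xrightarrow{\op{id}_V}V;\pi^*L_1,\dots,M_s]\bigr)=\widetilde c_1(M_1)\circ\cdots\circ\widetilde c_1(M_s)\circ\pi^*\bigl(\widetilde c_1(L_1)\circ\cdots\circ\widetilde c_1(L_r)(\nu^*1_S)\bigr),$$
and the inner factor vanishes by (rel-Dim) applied to the smooth morphism $\nu$, since $r>\op{dim}\nu$. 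Hence $\tau_{A^*}$ factors through $\underline\tau\colon\underline{\Cal Z}^*(-\to S)\to A^*(-\to S)$.

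\emph{Descent past $\langle\Cal R^{Sect}\rangle$, and the $\bL_*$-step.} For Definition~\ref{rel-Sect} the local element is $[[V\xrightarrow{\op{id}_V}V;L_1,\dots,L_r]]-\underline{i_*}[[Z\xrightarrow{\op{id}_Z}Z;i^*L_1,\dots,i^*L_{r-1}]]$ with $i\colon Z=s^{-1}(0)\hookrightarrow V$ and $\pi_Z:=\pi_V|_Z$ smooth; applying $\underline\tau$ and using (A3) $i_*\circ\widetilde c_1(i^*L_j)=\widetilde c_1(L_j)\circ i_*$ turns the second term into $\widetilde c_1(L_1)\circ\cdots\circ\widetilde c_1(L_{r-1})\bigl((i_Z)_*(\pi_Z^*1_S)\bigr)$, while (rel-Sect) applied to $L_r$ on $V$ gives $(i_Z)_*(\pi_Z^*1_S)=\widetilde c_1(L_r)(\pi_V^*1_S)$; so the two terms agree and $\underline\tau$ factors through $\underline{\underline\tau}\colon\underline\Omega^*(-\to S)\to A^*(-\to S)$. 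Next, using the $A(S\xrightarrow{\op{id}_S}S)$-module structure on $A^*(X\xrightarrow{\pi_X}S)$ supplied by $\times_S$ together with the ring homomorphism $\Phi\colon\bL_*\to A(S\xrightarrow{\op{id}_S}S)$, set $\widetilde\tau(\lambda\otimes\alpha):=\Phi(\lambda)\times_S\underline{\underline\tau}(\alpha)$; this is $\bZ$-bilinear, hence a well-defined map $\bL_*\otimes\underline\Omega^*(-\to S)\to A^*(-\to S)$ commuting with all four operations because $\Phi$ is a ring homomorphism. On the relevant local element the same pushforward reduction gives $\widetilde\tau\bigl([[V\xrightarrow{\op{id}_V}V;L_1,\dots,L_r,F_{\bL}(L,M)]]\bigr)=\widetilde c_1(L_1)\circ\cdots\circ\widetilde c_1(L_r)\bigl(F_A(\widetilde c_1(L),\widetilde c_1(M))(\pi_V^*1_S)\bigr)$, since $\Phi$ carries $F_{\bL}$ to $F_A$; by (rel-FGL) and (A5) this equals $\widetilde c_1(L_1)\circ\cdots\circ\widetilde c_1(L_r)\bigl(\widetilde c_1(L\otimes M)(\pi_V^*1_S)\bigr)$, the image of $[[V\xrightarrow{\op{id}_V}V;L_1,\dots,L_r,L\otimes M]]$. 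Hence $\widetilde\tau$ kills $\langle\bL_*\Cal R^{FGL}\rangle$ and descends to $\tau\colon\Omega^*(-\to S)\to A^*(-\to S)$. By construction $\tau$ is compatible with all operations and the $\bL_*$-structures and sends $[S\xrightarrow{\op{id}_S}S]$ to $1_S$; uniqueness follows as in Theorem~\ref{universality}, since every class of $\Omega^*(X\to S)$ is an $\bL_*$-combination of classes $\widetilde c_1(L_1)\circ\cdots\circ\widetilde c_1(L_r)\circ f_*\circ(\pi_V)^*(1_S)$, on which naturality and the normalization force the value of $\tau$.

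\emph{Main obstacle.} The difficulty is not conceptual but is a matter of bookkeeping: the crux of each of the three descent steps is to recognize that the auxiliary datum built into the definition of the relation subgroup --- the intermediate smooth morphism $\nu\colon S'\to S$ for $\langle\Cal R^{Dim}\rangle$, the fiberwise-transverse section $s$ for $\langle\Cal R^{Sect}\rangle$, and the occurrence of $F_{\bL}$ for $\langle\bL_*\Cal R^{FGL}\rangle$ --- is precisely what one needs in order to invoke (rel-Dim), (rel-Sect), (rel-FGL) respectively, once the Chern operators and the pushforward/pullback have been maneuvered into place by means of (A1), (A3), (A4), (A5); a secondary point requiring care is keeping the gradings and the $\bL_*$-module structure consistent when passing to $\bL_*\otimes\underline\Omega^*$.
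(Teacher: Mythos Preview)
Your proposal is correct and is precisely the detailed execution of what the paper only hints at: the paper gives no argument beyond the sentence ``in a similar manner to the proof of the universality of $\Cal Z^*(X\xrightarrow{\pi_X}S)$ in Theorem~\ref{universality}'', and your three-step descent through $\langle\Cal R^{Dim}\rangle$, $\langle\Cal R^{Sect}\rangle$, $\langle\bL_*\Cal R^{FGL}\rangle$, organized by the pushforward reduction to the ``local'' element on $V$, is exactly how one would unpack that hint. The use of (A1), (A3), (A4), (A5) to maneuver the Chern operators so that (rel-Dim), (rel-Sect), (rel-FGL) can be invoked on the smooth $S$-schemes $S'$ and $V$ is the correct mechanism, and the uniqueness argument by normalization and naturality matches the paper's method in Theorem~\ref{universality}.
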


\begin{rem} Motivated by the present construction, we defined a bivariant-theoretic analogue $\langle \Cal R^{Dim}\rangle(X \xrightarrow f Y) \subset \mathcal Z^*(X \xrightarrow f Y)$ and we thought that for the quotient 
$$\underline{\mathcal Z^*}(X \xrightarrow f Y):= \frac{\mathcal Z^*(X \xrightarrow f Y)}{\langle \Cal R^{Dim}\rangle(X \xrightarrow f Y) }$$
the above four operations, (i) orientation $\widetilde c_1(L)$, (ii) the bivariant product $\bullet$, (iii) the bivariant pushforward and (iv) the bivariant pullback were all well-defined. Unfortunately only biariant product was not well-defined. Thus we hope to be able to come up with a reasonable subgroup $\langle \Cal R^{Dim}\rangle(X \xrightarrow f Y) \subset \mathcal Z^*(X \xrightarrow f Y)$ such that the bivariant product on the quotient $\underline{\mathcal Z^*}(X \xrightarrow f Y)$ is well-defined.
\end{rem}

\begin{rem} In a different paper we would like to consider whether we could construct the above algebraic cobordism of $S$-schemes analogously using ``double point degeneration" of Levine-Pandharipande's construction \cite{LP}.
\end{rem}
\section{Some properties of $\Omega^i(X \xrightarrow {\op{id}_X} X)$}

We denote $\Omega^i(X \xrightarrow {\op{id}_X} X)$ by $\widetilde {\Omega}^i(X)$ to avoid confusion with Levine--Morel's algebraic cobordism $\Omega^i(X)$ in the case when $X$ is smooth. We emphasize that our $\widetilde {\Omega}^i(X)$  is defined for any scheme $X$. Similarly we denote $\Cal Z^i(X \xrightarrow {\op{id}_X} X)$ by $\widetilde {\Cal Z}^i(X)$

Here we list basic properties of $\widetilde {\Omega}^i(X)$:

\begin{enumerate}
\item For \emph{any} morphism $f:X \to Y$ we have the pullback homomorphism
$$f^*:\widetilde {\Omega}^i(Y) \to \widetilde {\Omega}^i(X).$$
It is clear that on the level of $\widetilde {\Cal Z}^i(X)$ we have the pullback homomorphism
$$f^*:\widetilde {\Cal Z}^i(Y) \to \widetilde {\Cal Z}^i(X).$$
\end{enumerate}

\begin{pro} We have the following canonical cap product:
$$\cap: \widetilde {\Omega}^i(X)\otimes \Omega_{\op{dim}X}(X) \to \Omega_{\op{dim}X -i}(X),$$
which is defined by
$$[V \xrightarrow {h} X; L_1, \cdots L_r] \cap [W \xrightarrow {k} X] := [V \times_X W \xrightarrow {h\times_X k} X; p_1^*L_1, \cdots p_1^*L_r],$$
where $p_1:V \times_X W \to V$ is the projection and note that $V \times_X W$ is smooth.
\end{pro}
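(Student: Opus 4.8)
The plan is to verify the formula on cobordism cycles, extend it bilinearly, and then check that it descends through the relations defining $\widetilde{\Omega}^i$ and $\Omega_*$. At the cycle level, if $[V\xrightarrow{h}X;L_1,\dots,L_r]$ represents a class of $\widetilde{\Omega}^i(X)=\Omega^i(X\xrightarrow{\op{id}_X}X)$ then $h$ is proper and smooth, while if $[W\xrightarrow{k}X]$ represents a class of $\Omega_{\op{dim}X}(X)$ then $W$ is smooth, $k$ is proper and $\op{dim}W=\op{dim}X$; hence in $V\times_X W$ the second projection is the base change of the smooth morphism $h$, so $V\times_X W$ is smooth over $\op{Spec}k$, i.e. a smooth scheme, and $h\times_X k$ is proper. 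A relative-dimension count ($-i+r=\op{dim}V-\op{dim}X$ and $\op{dim}(V\times_X W)=\op{dim}W+\op{dim}(h)=\op{dim}V$) shows $[V\times_X W\xrightarrow{h\times_X k}X;p_1^*L_1,\dots,p_1^*L_r]$ lies in $\Cal Z_{\op{dim}X-i}(X)$. Writing out the defining fiber squares one sees this is exactly the bivariant product of the universal oriented bivariant theory of Section~\ref{FM-BT}, restricted to $\Cal Z^i(X\xrightarrow{\op{id}_X}X)\otimes\Cal Z^{-\op{dim}X}(X\to pt)\to\Cal Z^{i-\op{dim}X}(X\to pt)$; equivalently, for fixed $\alpha=[V\xrightarrow{h}X;L_1,\dots,L_r]$ the map $\beta\mapsto\alpha\cap\beta$ equals $h_*\circ\widetilde{c}_1(L_1)\circ\cdots\circ\widetilde{c}_1(L_r)\circ h^*$, since $h^*[W\xrightarrow{k}X]=[V\times_X W\xrightarrow{p_1}V]$. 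Bilinearity and the grading claim are then immediate.

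For descent in the second variable I would use the operator formula $\alpha\cap(-)=h_*\circ\widetilde{c}_1(L_1)\circ\cdots\circ\widetilde{c}_1(L_r)\circ h^*$: by \cite{LM} the smooth pullback, the Chern operators and the proper pushforward all pass to $\Omega_*$, and $\Cal Z_*$, $\Omega_*$ and these operations are defined for arbitrary (possibly singular) schemes, so it is harmless that $V$ may be singular when $X$ is. Thus $\alpha\cap(-)$ carries the total relation subgroup of $\Cal Z_{\op{dim}X}(X)$ into that of $\Cal Z_{\op{dim}X-i}(X)$ and descends to a homomorphism $\Omega_{\op{dim}X}(X)\to\Omega_{\op{dim}X-i}(X)$.

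For descent in the first variable, fix $\beta=[W\xrightarrow{k}X]$ and factor $(-)\cap\beta=k_*\circ\iota_W\circ k^{\sharp}$, where $k^{\sharp}\colon\widetilde{\Cal Z}^i(X)\to\widetilde{\Cal Z}^i(W)$ is the bivariant pullback along the square whose horizontal arrows are $k$ and whose vertical arrows are identities, $\iota_W\colon\widetilde{\Cal Z}^i(W)\to\Cal Z_{\op{dim}W-i}(W)$ reads a smooth–proper cobordism cycle over the smooth scheme $W$ as a Levine--Morel cycle, and $k_*$ is the Levine--Morel pushforward. One checks as in the proofs of Theorems~\ref{thm1} and \ref{thm2} that $k^{\sharp}$ and $k_*$ respect the three families of relations (pullback and pushforward of a relative section generator, resp. a relative formal group law generator, is again one of the same type), so the point is to show that $\iota_W$ carries $\langle\Cal R^{Dim}\rangle(W\xrightarrow{\op{id}_W}W)$, $\langle\Cal R^{Sect}\rangle$ and $\langle\bL_*\Cal R^{FGL}\rangle$ into the total relation subgroup of $\Cal Z_*(W)$. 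The section and formal group law cases are immediate because those relations have the same shape on both sides. The dimension case is the one I expect to be the main obstacle: the relative dimension relation on $\Omega^*(X\xrightarrow{\op{id}_X}X)$ only demands $r>\op{dim}S'-\op{dim}X$ for a generator $[V\xrightarrow{\nu\pi}X;\pi^*L_1,\dots,\pi^*L_r,M_1,\dots,M_s]$, whereas Levine--Morel's dimension relation demands that $r$ exceed the dimension of the smooth variety from which the $\pi^*L_j$ are pulled back. After forming the fiber product the $\pi^*L_j$ become pulled back from the smooth scheme $S'\times_X W$, which has dimension $\op{dim}S'$, along the smooth morphism $V\times_X W\to S'\times_X W$; bridging the gap between the available bound $r>\op{dim}S'-\op{dim}X$ and the bound needed on the target — by an iterated application of the relative section axiom on $\nu\colon S'\to X$ together with the projection formula — is the technical heart of the argument and the step that needs the most care.
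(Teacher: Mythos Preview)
The paper gives no proof of this proposition, so there is nothing to compare against directly. Your outline is careful, and your instinct that the relative dimension relation is the real obstacle is correct --- but the gap you flag cannot be bridged: the cap product as stated is \emph{not} well-defined in the first variable.

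Take $X=\bP^1$ and the cycle $[\bP^1\xrightarrow{\op{id}}\bP^1;\Cal O(1)]$. In $\widetilde{\Omega}^1(X)=\Omega^1(X\xrightarrow{\op{id}_X}X)$ this class is zero: in Definition~\ref{keydef} one may take $S'=X$, $\nu=\op{id}_X$, $\pi=\op{id}_X$, so that $r=1>0=\op{dim}\nu$ and the cycle lies in $\langle\Cal R^{Dim}\rangle(X\xrightarrow{\op{id}_X}X)$. (More generally, the remark following Definition~\ref{keydef} forces $\widetilde c_1(L)=0$ on $\widetilde\Omega^*(X)$ for \emph{every} line bundle $L$ on $X$, since any $h^*L$ is pulled back from the base $S=X$.) On the other hand, capping with $[\bP^1\xrightarrow{\op{id}}\bP^1]\in\Omega_1(\bP^1)$ yields $[\bP^1\xrightarrow{\op{id}}\bP^1;\Cal O(1)]\in\Omega_0(\bP^1)$, which by Levine--Morel's section axiom is the class of a point and is nonzero (its pushforward to $\Omega_0(pt)\cong\bZ$ is $1$). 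Thus the formula sends $0$ to a nonzero element.

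Your suggested repair via iterated applications of the section axiom cannot succeed: the discrepancy between the bound you have, $r>\op{dim}S'-\op{dim}X$, and the bound Levine--Morel requires, $r>\op{dim}(S'\times_X W)=\op{dim}S'$, is exactly $\op{dim}X$, and the example above already exhibits the failure with $r=1$ and $\op{dim}X=1$. This is consistent with the author's own remark at the end of \S5 that the bivariant product does not descend through $\langle\Cal R^{Dim}\rangle$; the cap product here is precisely that bivariant product restricted to $\Omega^*(X\xrightarrow{\op{id}_X}X)\otimes\Omega^*(X\to pt)$. The same counterexample shows that the map $\Cal D$ introduced immediately after the proposition is likewise not well-defined for smooth $X$ of positive dimension.
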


In particular, when $X$ is smooth, we have the following canonical map:
$$\Cal D: \widetilde {\Omega}^i(X) \to  \Omega_{\op{dim}X -i}(X),$$
which is defined by 
$$\Cal D([V \xrightarrow h X, L_1, \cdots, L_r]):= [V \xrightarrow h X, L_1, \cdots, L_r] \cap [X \xrightarrow {\op{id}_X} X],$$
namely 
$$\Cal D([V \xrightarrow h X, L_1, \cdots, L_r]):= [V \xrightarrow h X, L_1, \cdots, L_r].$$
Since Levine and Morel define $\Omega^i(X):= \Omega_{dim X -i}(X)$ in the case when $X$ is smooth, the above canonical homomorphism
$\Cal D$ is also expressed as
$$\Cal D: \widetilde {\Omega}^i(X) \to \Omega^i(X).$$

Suppose that $X$ is not smooth. Then whenever we are given a resolution of singularities $\pi:\widetilde {X} \to X$, we have the corresponding homomorphism
$$\Cal D_{\pi}: \widetilde {\Omega}^i(X)\to \Omega_{\op{dim}X -i}(X),$$
which is defined by 
\begin{align*}
\Cal D_{\pi}([V \xrightarrow h X, L_1, \cdots, L_r]) & := [V \xrightarrow h X, L_1, \cdots, L_r] \cap [\widetilde {X} \xrightarrow {\pi} X]\\
& = [V \times_X \widetilde X \xrightarrow {h\times_X \pi } X; p_1^*L_1, \cdots p_1^*L_r],
\end{align*}
where $p_1:V \times_X \widetilde X \to V$ is the projection and note that $V \times_X \widetilde X$ is smooth. Here we note that all the resolutions of singularities make a direct system, indeed we can see this as follows. Let $\mathcal R_X$ denote the set of all the resolution of singularities of $X$. If $X$ is nonsingular, then $\mathcal R_X$ is defined to be just $\{\op{id}_X:X \to X \}$, the identity map. For two resolutions $\pi_1:\widetilde X_1 \to X$ and $\pi_2:\widetilde X_2 \to X$ we define the order $\pi_1 \leqq \pi_2$ by
$$\pi_1 \leqq \pi_2 \Longleftrightarrow \quad \exists \pi_{12}:\widetilde X_2 \to \widetilde X_1 \quad \text{such that} \quad \pi_2 = \pi_1 \circ \pi_{12}.$$
Then we have
\begin{lem} The ordered set $(\mathcal R_X, \leqq)$ is a directed set.
\end{lem}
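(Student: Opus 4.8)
The plan is to verify the three defining properties of a directed set for $(\mathcal R_X, \leqq)$: that it is nonempty, that $\leqq$ is a preorder, and that any two elements have a common upper bound. Nonemptiness is immediate: if $X$ is smooth then $\mathcal R_X = \{\op{id}_X\}$, and otherwise Hironaka's theorem on resolution of singularities (we work in characteristic zero throughout, as needed already for Levine--Morel's theory) provides at least one resolution. Reflexivity of $\leqq$ holds by taking $\pi_{12} = \op{id}$, and transitivity holds by composing the witnessing morphisms: if $\pi_1 = \pi_2 \circ \sigma$ and $\pi_2 = \pi_3 \circ \tau$ then $\pi_1 = \pi_3 \circ (\tau \circ \sigma)$. (The relation need not be antisymmetric, but a directed set only requires a preorder.) So the whole content is the existence of upper bounds.

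First I would, given two resolutions $\pi_1 : \widetilde X_1 \to X$ and $\pi_2 : \widetilde X_2 \to X$, form the fiber product $W := \widetilde X_1 \times_X \widetilde X_2$ with its two projections $p_j : W \to \widetilde X_j$, so that $q := \pi_1 \circ p_1 = \pi_2 \circ p_2 : W \to X$. Since $\pi_1$ and $\pi_2$ are birational, there is a dense open $U \subseteq X$ over which both are isomorphisms, and then the preimage $q^{-1}(U) \subseteq W$ is isomorphic to $U$. Let $Z \subseteq W$ be the reduced closure of $q^{-1}(U)$: it is an irreducible closed subscheme of $W$, the restriction $q|_Z : Z \to X$ is proper (as $Z$ is closed in $W$ and $p_1 : W \to \widetilde X_1$, being a base change of the proper map $\pi_2$, is proper, so $W \to X$ is proper) and birational, and each $p_j|_Z : Z \to \widetilde X_j$ is likewise proper and birational (since $\pi_j \circ (p_j|_Z) = q|_Z$ with both outer maps birational). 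If $X$ is reducible one carries this out on each irreducible component and takes the disjoint union.

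Next I would choose, by resolution of singularities applied to $Z$, a resolution $\rho : \widetilde Z \to Z$, and set $\widetilde X_3 := \widetilde Z$ and $\pi_3 := q \circ \rho = \pi_1 \circ p_1 \circ \rho = \pi_2 \circ p_2 \circ \rho$. Then $\pi_3$ is proper (a composite of proper morphisms), its source $\widetilde X_3$ is smooth, and it is birational (a composite of birational morphisms), so $\pi_3 \in \mathcal R_X$. Putting $\pi_{13} := p_1 \circ \rho : \widetilde X_3 \to \widetilde X_1$ and $\pi_{23} := p_2 \circ \rho : \widetilde X_3 \to \widetilde X_2$ gives $\pi_3 = \pi_1 \circ \pi_{13}$ and $\pi_3 = \pi_2 \circ \pi_{23}$, i.e. $\pi_1 \leqq \pi_3$ and $\pi_2 \leqq \pi_3$. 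Thus $\pi_3$ is the desired upper bound, and $(\mathcal R_X,\leqq)$ is directed.

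The main obstacle is that the naive candidate for an upper bound, the fiber product $W = \widetilde X_1 \times_X \widetilde X_2$ itself, is in general neither smooth nor irreducible precisely because $X$ is singular, so it cannot be used directly; the work lies in singling out the distinguished component $Z$ dominating $X$, resolving it, and then checking that the resulting morphism $\widetilde Z \to X$ is still a bona fide resolution — in particular that it remains birational, which is exactly where one uses that $\pi_1$ and $\pi_2$ agree with the identity over a common dense open of $X$. A secondary, purely set-theoretic point worth a remark is that $\mathcal R_X$ should be understood as consisting of isomorphism classes of resolutions, so that it is genuinely a set.
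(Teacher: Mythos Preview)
Your proof is correct and follows essentially the same strategy as the paper: form the fiber product $\widetilde X_1 \times_X \widetilde X_2$, resolve it, and compose with the projections to obtain a common upper bound. The paper's version is terser---it simply resolves the full fiber product without first passing to the closure $Z$ of the dominant component---so your argument is in fact more careful: the fiber product can acquire extra components over the singular locus of $X$, and one needs the resulting map to $X$ to remain birational, a point the paper leaves implicit.
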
 
\begin{proof}
Indeed, for any two resolutions $\pi_1:\widetilde X_1 \to X$ and $\pi_2:\widetilde X_2 \to X$, we consider the following fiber product:
$$\CD
\widetilde X_1 \times_X \widetilde X_2  @> {\widetilde {\pi_1}} >> \widetilde X_2\\
@V {\widetilde {\pi_2}} VV @VV {\pi_2} V\\
\widetilde X_1 @> {\pi_1} >> X. \endCD 
$$
Let $\pi: \widetilde {\widetilde X_1 \times_X \widetilde X_2 } \to \widetilde X_1 \times_X \widetilde X_2$ be a resolution of singularities and let $\pi_3: \widetilde {\widetilde X_1 \times_X \widetilde X_2 } \to X$ be the composite
$$\pi_3 = \pi_1 \circ (\widetilde {\pi_2} \circ \pi) = \pi_2 \circ (\widetilde {\pi_1} \circ \pi).$$
Which means that $\pi_1 \leqq \pi_3$ and $\pi_2 \leqq \pi_3$, therefore $(\mathcal R_X, \leqq)$ is a directed set.
\end{proof}

Now, for each resolution $\pi:\widetilde X \to X  \in \mathcal R_X$, we let 
\begin{align*} 
& \Omega_{\op{dim}X -i}(X)_{\pi} \\
& := \op{Im} \mathcal D_{\pi}(\widetilde {\Omega}^i(X) ) \\
& = \left \{ [V \xrightarrow h X, L_1, \cdots, L_r] \cap [\widetilde {X} \xrightarrow {\pi} X] \, \, | \, \, [V \xrightarrow h X, L_1, \cdots, L_r]  \in \widetilde {\Omega}^i(X) \right \} \subset \Omega_{\op{dim}X -i}(X).
\end{align*}

Then we have a directed system $\left \{\Omega_{\op{dim}X -i}(X)_{\pi}, \phi_{\pi_1 \pi_2} \right \}$ where for $\pi_1 \leqq \pi_2$ the morphism $\phi_{\pi_1 \pi_2} :\Omega_{\op{dim}X -i}(X)_{\pi_1} \to \Omega_{\op{dim}X -i}(X)_{\pi_2}$ is defined by

\begin{align*}
\phi_{\pi_1 \pi_2} ( [V \xrightarrow h X, L_1, \cdots, L_r] \cap [\widetilde {X_1} \xrightarrow {\pi_1} X]) & = [V \xrightarrow h X, L_1, \cdots, L_r] \cap [\widetilde X_2 \xrightarrow {\pi_{12}} \widetilde {X_1} \xrightarrow {\pi_1} X] \\
& = [V \xrightarrow h X, L_1, \cdots, L_r] \cap [\widetilde X_2 \xrightarrow {\pi_2} X]
\end{align*}
Thus we can define the canonical map
$$\widetilde {\Cal D}:\widetilde {\Omega}^i(X) \to \varinjlim_{\pi\in \mathcal R_X} \Omega_{\op{dim}X -i}(X)_{\pi} \subset \Omega_{\op{dim}X -i}(X)$$
by
$$\widetilde {\Cal D}( [V \xrightarrow h X, L_1, \cdots, L_r]) := \varinjlim_{\pi\in \mathcal R_X} \Cal D_{\pi}( [V \xrightarrow h X, L_1, \cdots, L_r]).$$


\section{A remark on a relation with Gonzal\'ez--Karu's operational 
bivariant algebraic cobordism}

Finally we want to mention about a relation with Gonzal\'ez--Karu's operational 
bivariant algebraic cobordism \cite{GK}, which shall be denoted by $\bB_{op}^{GK}\Omega(X \to Y)$.

We expect that there is a canonical transformation

$$\tau: \Omega^*(X \xrightarrow {\pi_X} S) \to \bB_{op}^{GK}\Omega(X \xrightarrow {\pi_X} S)$$
defined as follows: for each element $[V \xrightarrow h X; L_1, \cdots, L_r]$ and for any morphism $g:S' \to S$
\begin{align*}
& \tau([V \xrightarrow h X; L_1, \cdots, L_r])\\
& := \Bigl \{h'_* \circ \widetilde c_1((g'')^*L_1) \circ \cdots \circ \widetilde c_1((g'')^*L_r) \circ ({\pi_X}' \circ h')^*: \Omega_*(S') \to \Omega_* (X') \Bigr \}_{g:S'\to S},
\end{align*}
where we consider the following fiber squares:
$$\CD
V' @> g''>> V\\
@V h' VV @VV h V\\
X' @> g' >> X \\
@V {\pi_X}' VV @VV \pi_X V\\
S' @>> g > S. \endCD
$$
We would like to treat this in a different paper.\\


\end{document}